% edited summer 2022 in response to agt referee

% don't need "field of characteristic zero" - get the most general statement from HALT

% I changed "map" everywhere to "morphism" after a different referee yelled at me about this ("map" allegedly only being for concrete categories)

% citation style: NOT \cite[Theorem 4.2.3]{hovey} but rather \cite{hovey} (4.2.3)

\documentclass[11pt,leqno]{amsart}

\setlength{\textheight}{8.8in}
\setlength{\topmargin}{-.1in}
\parskip=.08in

%==================
%=== margin notes
\usepackage[margin=1.5in]{geometry}
\usepackage[colorinlistoftodos]{todonotes}

\newcounter{todocounter}

%== for david ==

%== version with a line to text

%== version with an arrow to text

%== for donald ==

%=== end margin notes ====

%=== tex gyre pagella for text
%=== mathpazo for math
\usepackage[T1]{fontenc}
\usepackage{tgpagella}
\usepackage{mathpazo}

\usepackage[pagebackref]{hyperref} % for pdflatex
\usepackage{xcolor}
\usepackage{amsmath,amsthm}
\usepackage{amssymb}
\usepackage{graphicx}
\usepackage[mathscr]{eucal}
\usepackage{MnSymbol}
\usepackage[frame,ps,matrix,arrow,curve,rotate,all,2cell,tips]{xy}
\usepackage{epic,eepic}\setlength{\unitlength}{.35mm}
\usepackage{enumerate}
\usepackage{tikz-cd}

\setcounter{tocdepth}{1}

\newtheorem{main}{Theorem}

\newtheorem{theorem}[equation]{Theorem}
\newtheorem{lemma}[equation]{Lemma}

\newtheorem{proposition}[equation]{Proposition}
\newtheorem{corollary}[equation]{Corollary}

% The following environments have roman body.

\theoremstyle{definition}
\newtheorem{definition}[equation]{Definition}
\newtheorem{example}[equation]{Example}
\newtheorem{remark}[equation]{Remark}

\newtheorem{conjecture}[equation]{Conjecture}
\newtheorem{problem}[equation]{Problem}

\numberwithin{equation}{subsection}

%\newtheorem{defn}[subsubsection]{Definition}
% donald has this command reserved
%\newtheorem{conj}[subsubsection]{Conjecture}
%\newtheorem{fact}[subsubsection]{Fact}
%\newtheorem{problem}[subsubsection]{Problem}

% Common number systems

\newcommand{\F}{\mathbb{F}}

\renewcommand{\O}{\mathcal{O}}
\newcommand{\RO}{R_{\O}}
\newcommand{\arrowo}{\overrightarrow{\O}}
\newcommand{\arrowotensor}{\arrowo^{\otimes}}
\newcommand{\arrowopp}{\arrowo^{\square}}
\newcommand{\arrowcom}{\overrightarrow{\Com}}
\newcommand{\arrowcomtensor}{\arrowcom^{\otimes}}
\newcommand{\arrowcompp}{\arrowcom^{\square}}

\newcommand{\algom}{\alg(\O;\M)}
\newcommand{\algomar}{\overrightarrow{\algom}}
\newcommand{\algominv}{\alg(\O;\M[W^{-1}])}
\newcommand{\algnom}{\alg(N^{\otimes}\O;N(\M^c)[W^{-1}])}
\newcommand{\algosm}{\alg(\O^s;\M)}

\newcommand{\algomtensor}{\alg\bigl(\arrowotensor; \arrowmtensor\bigr)}
\newcommand{\algompp}{\alg\bigl(\arrowopp; \arrowmpp\bigr)}

\newcommand{\arrowastensor}{\overrightarrow{\As}^{\otimes}}

% Categories
\newcommand{\M}{\mathsf{M}}

\newcommand{\Msigmanop}{\M^{\sigmaop_n}}

\newcommand{\Sigmanop}{\Sigmaop_n}

\newcommand{\N}{\mathsf{N}}

\newcommand{\C}{\mathcal{C}}

\newcommand{\D}{\mathcal{D}}

% Scripts: Generators, families

\newcommand{\sD}{\mathscr{D}}

% Preferences
\renewcommand{\emptyset}{\varnothing}

\DeclareMathOperator{\dom}{dom}

\DeclareMathOperator{\colim}{colim}
\newcommand{\colimover}[1]{\underset{#1}{\colim}}
\DeclareMathOperator{\coker}{coker}
\newcommand{\Cok}{\mathsf{C}}
\newcommand{\Ker}{\mathsf{K}}

 %trace

%Pushouts and pullbacks
\newcommand{\po}{\ar@{}[dr]|(.7){\Searrow}}
\newcommand{\pb}{\ar@{}[dr]|(.3){\Nwarrow}}
%\ar[r]^(.3){\simeq}

\newcommand{\Ch}{\mathsf{Ch}}

\newcommand{\cat}[1]{\mathcal{#1}}

\newcommand{\boxprod}{\mathbin\square}
\newcommand{\lbox}{\largesquare}
\newcommand{\boxprodover}[1]{\underset{#1}{\boxprod}}
\newcommand{\pcorner}{\circledast}
\newcommand{\pback}{\boxbackslash} % pullback corner

\newcommand{\Sp}{\mathsf{Sp}}

\newcommand{\coprodover}[1]{\underset{#1}{\coprod}}

\newcommand{\tensorover}[1]{\underset{#1}{\otimes}}

\newcommand{\bigtensor}[2]{\overset{#2}{\underset{#1}{\bigotimes}}}

\newcommand{\timesover}[1]{\underset{#1}{\times}}
\newcommand{\algo}{{\alg(\O)}}

\newcommand{\smallbinom}[2]
{\raisebox{.05cm}{\scalebox{0.8}{$\binom{#1}{#2}$}}}

% Local

\hyphenation{co-fi-bra-tion co-fi-bra-tions}
\hyphenation{mo-noid-al}
\hyphenation{Quil-len}

\hfuzz2pt

%====================
%    macros for tikz
%====================

\usepackage{tikz}
\usetikzlibrary{arrows,decorations.pathmorphing}
\usetikzlibrary{backgrounds,positioning}
\usetikzlibrary{fit,petri,shapes.misc}

%=== edge label on top of edge
\tikzset{auto}
%=== node[swap]{?} to put label under edge

%===== empty vertex
\tikzset{empty/.style={circle,inner sep=0pt,minimum size=6mm}}
\tikzset{emptyvt/.style={circle,inner sep=0pt,minimum size=0mm}}

%===== circular vertex no color
\tikzset{plain/.style={circle,draw,very thick,
inner sep=0pt,minimum size=6mm}}

%===== rectanglar vertex no color
\tikzset{fatplain/.style={rounded rectangle,draw,very thick,minimum size=6mm}}

\tikzset{bigplain/.style={rounded rectangle,draw,very thick,minimum size=.8cm}}

%===== color vertices
\tikzset{yellowvt/.style={circle,draw,fill=yellow,very thick,inner sep=0pt,minimum size=6mm}}

\tikzset{bluevt/.style={circle,draw,fill=blue!20,very thick,inner sep=0pt,minimum size=6mm}}

\tikzset{greenvt/.style={circle,draw,fill=green!30,very thick,inner sep=0pt,minimum size=6mm}}

\tikzset{redvt/.style={circle,draw,fill=red!30,very thick,inner sep=0pt,minimum size=6mm}}

%===== edges 
\tikzset{arrow/.style={->,thick}}
\tikzset{dashedarrow/.style={->,dashed,thick}}
\tikzset{dottedarrow/.style={->,dotted,thick}}
\tikzset{mapto/.style={|->,thick}}

%===== double arrow, like =>
\tikzset{implies/.style={thick,double,double equal sign distance,-implies}}

%===== lines
\tikzset{line/.style={thick}}
\tikzset{dottedline/.style={dotted,thick}}
\tikzset{dashedline/.style={dashed,thick}}

%===== in/output leg
\tikzset{inputleg/.style={<-,thick}}
\tikzset{outputleg/.style={->,thick}}
\tikzset{dottedinput/.style={<-,dotted,thick}}

%================
%  end tikz macros
%================

%=== xymatrix ===
\newcommand{\adjoint}{\hspace{-.1cm}
\nicearrow\xymatrix{ \ar@<2pt>[r] & \ar@<2pt>[l]}\hspace{-.1cm}}
\renewcommand{\hookrightarrow}{\nicexy{\ar@{^{(}->}[r] &}}
\newcommand{\nicearrow}{\SelectTips{cm}{10}}
\newcommand{\nicexy}{\nicearrow\xymatrix@C+10pt@R+8pt}
\newcommand{\narrowxy}{\nicearrow\xymatrix@R+10pt}

%===== arrows/operations =======
\renewcommand{\to}{\hspace{-.1cm}\nicearrow\xymatrix@C-.3cm{\ar[r]&}\hspace{-.1cm}}

%===== symbols =====

%===== blackboard bold =====

%===== bold =====

\newcommand{\tensor}{\otimes}
\newcommand{\tensorunit}{\mathbb{1}}

%===== fraktur =====

\newcommand{\fC}{\mathfrak{C}}

%===== sans serif =====

\renewcommand{\sD}{\mathsf{D}}

%\newcommand{\sE}{\End}
%\newcommand{\coend}{\sE^c}

%\renewcommand{\sQ}{\mathsf{Q}}

%\newcommand{\ug}{\mathsf{U}_{\cg}}

%===== operads =======
\newcommand{\As}{\mathsf{As}}
\newcommand{\Com}{\mathsf{Com}}

%===== script ======

%===== tilde =======

%===== typewriter =======

%===== overline =======

\newcommand{\labar}{\overline{\lambda}}

%===== underline =======
\newcommand{\ua}{\underline{a}}
\newcommand{\ub}{\underline{b}}
\newcommand{\uc}{\underline{c}}

%===== hat =======

%==== small symbols =====
\newcommand{\smallop}{{\scalebox{.5}{$\mathrm{op}$}}}
\newcommand{\cof}{{\scalebox{.5}{$\mathrm{cof}$}}}

\newcommand{\tcof}{{\scalebox{.5}{$\mathrm{t.cof}$}}}
\newcommand{\clubcof}{(\clubsuit)_{\cof}}

\newcommand{\clubtcof}{(\clubsuit)_{\tcof}}

%===== mathcal =======

%\renewcommand{\G}{\mathcal G}

%===== categories ========

\newcommand{\alg}{\mathsf{Alg}}

\newcommand{\CAlg}{\mathsf{CAlg}}

\newcommand{\spancat}{\{\nicexy@C-.8cm{-1 & 0 \ar[l] \ar[r] & 1}\}}

\newcommand{\inj}{{\scalebox{.5}{$\mathrm{inj}$}}}
\newcommand{\proj}{{\scalebox{.5}{$\mathrm{proj}$}}}

\newcommand{\arrowm}{\overrightarrow{\M}}
\newcommand{\arrowminj}{\arrowm_{\inj}}
\newcommand{\arrowmproj}{\arrowm_{\proj}}
\newcommand{\arrowmtensor}{\arrowm^{\otimes}}
\newcommand{\arrowmtensorc}{(\arrowmtensor)^{\fC}}
\newcommand{\arrowmtensorinj}{\arrowmtensor_{\inj}}
\newcommand{\arrowmtensorsigmanop}{(\arrowmtensor)^{\Sigmaop_n}}

\newcommand{\arrowmpp}{\arrowm^{\square}}
\newcommand{\arrowmppc}{(\arrowmpp)^{\fC}}
\newcommand{\arrowmppproj}{\arrowmpp_{\proj}}

%===== groupoids ========

\newcommand{\Sigmac}{\Sigma_{\fC}}
\newcommand{\Sigmaop}{\Sigma^{\smallop}}
\newcommand{\sigmaop}{\Sigmaop}
\newcommand{\Sigmacop}{\Sigmac^{\smallop}}
\newcommand{\Sigmacopc}{\Sigmacop \times \fC}

%===== operads and PROPS =======

%===== profiles categories ======

% Now you can use the format
% \newcommand{\this}[1][\binom]{{#1}{that}{other}}
% to get the vertical types with
% just \this (ignoring the optional argument)
% while \this[\hbinom]
% will give the horizontal version

%=== profiles in binomial style
%=== \smallprof used to reduce the vertical size
%=== \scalebox shrinks it to .7 original size
%=== \raisebox pulls it up a little bit
\newcommand{\smallprof}[1]
{\raisebox{.05cm}{\scalebox{0.8}{#1}}}

\newcommand{\ciubi}
{\smallprof{$\binom{c_i}{\ub_i}$}}

\newcommand{\ccsingle}
{\smallprof{$\binom{c}{c}$}}
\newcommand{\ddsingle}
{\smallprof{$\binom{d}{d}$}}

\newcommand{\dub}
{\smallprof{$\binom{d}{\ub}$}}
\newcommand{\duc}
{\smallprof{$\binom{d}{\uc}$}}
\newcommand{\dnc}
{\smallprof{$\binom{d}{nc}$}}

\newcommand{\dnothing}
{\smallprof{$\binom{d}{\varnothing}$}}

%===== space =======
\newcommand{\andspace}{\qquad\text{and}\qquad}

%===== limits ========
\renewcommand{\lim}{\mathsf{lim}\,}

%===== operators =======
%\DeclareMathOperator*{\colim}{\mathsf{colim}\,}

\DeclareMathOperator{\Ev}{Ev}
\DeclareMathOperator{\Hom}{Hom}

\DeclareMathOperator{\Id}{Id}

\begin{document}

\title{Smith Ideals of Operadic Algebras in Monoidal Model Categories}

\author{David White}
\address{Denison University
\\ Granville, OH}
\email{david.white@denison.edu}

\author{Donald Yau}
\address{The Ohio State University at Newark \\ Newark, OH}
\email{yau.22@osu.edu}

\begin{abstract}
Building upon Hovey's work on Smith ideals for monoids, we develop a homotopy theory of Smith ideals for general operads in a symmetric monoidal category.  For a sufficiently nice stable monoidal model category and an operad satisfying a cofibrancy condition, we show that there is a Quillen equivalence between a model structure on Smith ideals and a model structure on algebra morphisms induced by the cokernel and the kernel.  For symmetric spectra, this applies to the commutative operad and all Sigma-cofibrant operads.  For chain complexes over a field of characteristic zero and the stable module category, this Quillen equivalence holds for all operads.  This paper ends with a comparison between the semi-model category  approach and the $\infty$-category approach to encoding the homotopy theory of algebras over Sigma-cofibrant operads that are not necessarily admissible.
\end{abstract}

\maketitle

%\tableofcontents

%======================
\section{Introduction}
%======================

A major part of stable homotopy theory is the study of structured ring spectra.  These include strict ring spectra, commutative ring spectra, $A_{\infty}$-ring spectra, $E_{\infty}$-ring spectra, $E_n$-ring spectra, and so forth.  Based on an unpublished talk by Jeff Smith, in \cite{hovey-smith} Hovey developed a homotopy theory of Smith ideals for ring spectra and monoids in more general symmetric monoidal model categories.  

Let us briefly recall Hovey's work in \cite{hovey-smith}.  For a symmetric monoidal closed category $\M$, its arrow category $\arrowm$ is the category whose objects are morphisms in $\M$ and whose morphisms are commutative squares in $\M$.  It has two symmetric monoidal closed structures, namely, the tensor product monoidal structure $\arrowmtensor$ and the pushout product monoidal structure $\arrowmpp$.  A monoid in $\arrowmpp$ is a Smith ideal, and a monoid in $\arrowmtensor$ is a monoid morphism.  If $\M$ is a model category, then $\arrowmtensor$ has the injective model structure $\arrowmtensorinj$, where weak equivalences and cofibrations are defined entrywise, and the category of monoid morphisms inherits a model structure from $\arrowmtensorinj$.  Likewise, $\arrowmpp$ has the projective model structure $\arrowmppproj$, where weak equivalences and fibrations are defined entrywise, and the category of Smith ideals inherits a model structure from $\arrowmppproj$.  Surprisingly, when $\M$ is pointed (resp., stable), the cokernel and the kernel form a Quillen adjunction (resp., Quillen equivalence) between $\arrowmpp$ and $\arrowmtensor$ and also between Smith ideals and monoid morphisms.

Since monoids are algebras over the associative operad, a natural question is whether there is a satisfactory theory of Smith ideals for algebras over other operads.  For the commutative operad, the first author showed in \cite{white-commutative} that commutative Smith ideals in symmetric spectra, equipped with either the positive flat (stable) or the positive (stable) model structure, inherit a model structure.  The purpose of this paper is to generalize Hovey's work to Smith ideals for general operads in monoidal model categories.  For an operad $\O$ we define a Smith $\O$-ideal as an algebra over an associated operad $\arrowopp$ in the arrow category $\arrowmpp$.  We will prove a precise version of the following result in Theorem \ref{smith=map}.

\begin{main}
Suppose $\M$ is a sufficiently nice stable monoidal model category, and $\O$ is a $\fC$-colored operad in $\M$ such that cofibrant Smith $\O$-ideals are also entrywise cofibrant in the arrow category of $\M$ with the projective model structure.   Then there is a Quillen equivalence
\[\nicexy{\bigl\{\text{Smith $\O$-Ideals}\bigr\} \ar@<2pt>[r]^-{\coker} 
& \bigl\{\text{$\O$-Algebra Maps}\bigr\} \ar@<2pt>[l]^-{\ker}}\]
induced by the cokernel and the kernel.
\end{main}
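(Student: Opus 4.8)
The plan is to lift Hovey's Quillen equivalence $(\coker, \ker)\colon \arrowmppproj \to \arrowmtensorinj$ from the arrow category up to the categories of operadic algebras, exploiting that everything in sight is created by the relevant forgetful functors. The monoidal input is that the cokernel functor $\coker\colon (\arrowm, \boxprod) \to (\arrowm, \otimes)$ is strong symmetric monoidal: there is a natural isomorphism $\coker(f \boxprod g) \cong \coker(f) \otimes \coker(g)$ and $\coker$ carries the $\boxprod$-unit to the $\otimes$-unit, so its right adjoint $\ker$ is lax symmetric monoidal. Consequently $\coker$ sends the operad $\arrowopp$ to the operad $\arrowotensor = \coker(\arrowopp)$, and the monoidal adjunction lifts to an adjunction
\[ \nicexy{\algompp \ar@<2pt>[r]^-{\coker} & \algomtensor \ar@<2pt>[l]^-{\ker}} \]
between Smith $\O$-ideals and $\O$-algebra maps. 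Writing $U$ for the forgetful functors to $\arrowm$, this lift is compatible with the forgetful functors, $U\coker = \coker U$ and $U\ker = \ker U$, since $\coker$ and $\ker$ on algebras are computed on underlying arrow objects.

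Next I would check that this is a Quillen adjunction. By the transfer theorem for operadic algebras available under the \emph{sufficiently nice} hypotheses, $\algompp$ carries the model structure created by $U\colon \algompp \to \arrowmppproj$ and $\algomtensor$ carries the one created by $U\colon \algomtensor \to \arrowmtensorinj$; in both cases fibrations, trivial fibrations, and weak equivalences are detected on underlying arrow objects. Since $\ker\colon \arrowmtensorinj \to \arrowmppproj$ is right Quillen \cite{hovey-smith} and $U\ker = \ker U$, the lifted $\ker$ preserves fibrations and trivial fibrations, so $(\coker, \ker)$ is a Quillen adjunction on algebras.

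The main step is the Quillen equivalence, and here the cofibrancy hypothesis is essential. I would verify the criterion that for every cofibrant $A \in \algompp$ and fibrant $B \in \algomtensor$, a map $\coker A \to B$ is a weak equivalence if and only if its adjoint $A \to \ker B$ is. Because weak equivalences in both algebra categories are created by $U$, and $U$ intertwines the two adjunctions, this reduces to the identical statement for the underlying objects $UA$ and $UB$ in $\arrowm$. Now $UB$ is fibrant in $\arrowmtensorinj$ since fibrations are created by $U$, and by hypothesis $UA$ is (entrywise) cofibrant in $\arrowmppproj$; hence the underlying assertion is exactly Hovey's Quillen equivalence $(\coker, \ker)\colon \arrowmppproj \to \arrowmtensorinj$, valid because $\M$ is stable \cite{hovey-smith}. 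This yields the claim.

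I expect the hard part to be precisely the bridge in the previous paragraph: without knowing that cofibrant Smith $\O$-ideals have underlying cofibrant objects in $\arrowmppproj$, one cannot feed them into the underlying equivalence. That hypothesis is assumed in the statement, so the genuinely case-specific work is to identify the operads and model categories for which it holds — symmetric spectra with the commutative and $\Sigma$-cofibrant operads, chain complexes over a field of characteristic zero, and the stable module category — which is carried out separately.
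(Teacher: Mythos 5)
Your proposal is correct, and the setup (lifting the adjunction via the strong symmetric monoidal $\coker$ and lax symmetric monoidal $\ker$, transferring model structures so that $U$ creates fibrations and weak equivalences, and deducing the Quillen adjunction from $U\ker = \ker U$) matches the paper's Theorem 3.3.1 and Proposition 4.3.1 essentially verbatim. Where you genuinely diverge is the final step. The paper does \emph{not} verify the adjoint-map criterion directly; it invokes the criterion of \cite{hovey} (1.3.16): first it proves that $\ker$ reflects weak equivalences between fibrant objects (again by pushing through $U$), and then it shows the derived unit $f_X \to \ker \RO \coker f_X$ is a weak equivalence for cofibrant $f_X$. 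The cost of that route is that the derived unit involves a fibrant replacement $\RO$ in $\algomtensor$, which must be compared with a fibrant replacement $R$ in the underlying category $(\arrowmtensorinj)^{\fC}$; the paper builds a lifting-diagram map $\alpha : RU\coker f_X \to U\RO\coker f_X$, shows it is a weak equivalence between fibrant objects, and applies Ken Brown's Lemma to $\ker\alpha$ before concluding via the underlying derived unit of $Uf_X$. Your route — checking the definition of Quillen equivalence on a map $g : \coker A \to B$ with $A$ cofibrant and $B$ fibrant, and reducing to Hovey's underlying equivalence applied to $UA$ (cofibrant by hypothesis) and $UB$ (fibrant since $U$ creates fibrations) — avoids fibrant replacements entirely and is shorter. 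The one point you assert rather than prove is that $U$ is a map of adjunctions, i.e., $U(g^{\#}) = (Ug)^{\#}$: this does not follow formally from $U\coker = \coker U$ and $U\ker = \ker U$ alone, but it does hold here because the lifted left adjoint is the entrywise cokernel arising from the monoidal adjunction, with unit and counit computed on underlying arrows; note the paper relies on the same compatibility implicitly when it identifies $U\eta$ with the underlying derived unit $\varepsilon$ in its final commutative diagram, so your argument is at the same level of rigor while buying a cleaner proof.
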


For example, this Theorem holds in the following situations:
\begin{enumerate}
\item $\O$ is an arbitrary $\fC$-colored operad, and $\M$ is (i) the category $\Ch(R)$ of bounded or unbounded chain complexes over a semi-simple ring containing $\mathbb{Q}$ (Corollary \ref{chain-zero}), (ii) the stable module category of $k[G]$-modules for some field $k$ and finite group $G$ (Corollary \ref{stmod-alloperad}), or (iii) the category of classical, equivariant, or motivic symmetric spectra with the positive or positive flat stable model structure (Example \ref{stable-examples}).
\item $\O$ is the commutative operad, and $\M$ is any of the examples above or equivariant orthogonal spectra, Hausmann's $G$-symmetric spectra \cite{hausmann}, or Schwede's global equivariant spectra \cite{schwede-global} with positive flat model structures (Subsection \ref{subsec:comm}).
\item $\O$ is $\Sigmac$-cofibrant (e.g., the associative operad, $A_{\infty}$-operads, $E_{\infty}$-operads, and $E_n$-operads), and $\M$ is any of the examples above, or (i) $\Ch(R)$ for a commutative ring $R$, (ii) StMod$(k[G])$ where $k$ is a principle ideal domain, (iii) an injective or projective model structure on spectra, (iv) $S$-modules \cite{ekmm}, (v) Mandell's equivariant symmetric spectra \cite{mandell-equivariant}, or (vi) a Lydakis style model structure on enriched functors (Corollary \ref{sigmacof-smith=map}, Example \ref{applicable-operads}, and Example \ref{stable-examples2}).
\end{enumerate}

The rest of this paper is organized as follows.  In Section \ref{sec:model-arrow} we recall some basic facts about model categories and arrow categories.  In Section \ref{sec:smith-operad} we define Smith ideals for an operad and prove that, when $\M$ is pointed, there is an adjunction between Smith $\O$-ideals and $\O$-algebra morphisms given by the cokernel and the kernel.  In Section \ref{sec:homotopy-smith} we define the model structures on Smith $\O$-ideals and $\O$-algebra morphisms and prove the Theorem above. We also include a discussion of what happens when there are only semi-model structures on Smith $\O$-ideals and $\O$-algebra morphisms. In Section \ref{sec:com} we apply the Theorem to the commutative operad and $\Sigmac$-cofibrant operads.  In Section \ref{sec:entywise} we apply the Theorem to entrywise cofibrant operads. In Section \ref{sec:appendix} we include a comparison between various approaches to encoding the homotopy theory of operad-algebras, including model categories, semi-model categories, and $\infty$-categories. This discussion holds in general, beyond the situation of Smith $\O$-ideals and $\O$-algebra morphisms.

\subsection*{Acknowledgments}

The authors would like to thank Mark Hovey for laying the groundwork for the study of Smith ideals, for suggesting to the first author to figure out the homotopy theory of commutative Smith ideals, and for all the guidance he has given to the community on matters related to model categories. Furthermore, we thank Bob Bruner, Dan Isaksen, and Andrew Salch for encouraging us to think about Smith ideals operadically, we thank Adeel Khan, Tyler Lawson, and Denis Nardin for an email exchange about this project, and we thank Rune Haugseng for an extremely helpful discussion related to Section \ref{sec:appendix}, and for encouraging us to write this section. Lastly, we thank the referee for many helpful comments that improved the exposition.

%===============================================
\section{Model Structures on the Arrow Category}
\label{sec:model-arrow}
%======================

In this section we recall a few facts about monoidal model categories and arrow categories.  Our main references for model categories are \cite{hirschhorn,hovey,ss}.  In this paper, $(\M, \otimes, \tensorunit, \Hom)$  will usually be a bicomplete symmetric monoidal closed category \cite{maclane} (VII.7) with monoidal unit $\tensorunit$, internal hom $\Hom$, initial object $\varnothing$, and terminal object $*$. Since $\M$ is closed, $\varnothing \otimes X = \varnothing$ for any $X$.

%===============================
\subsection{Monoidal Model Categories}

A model category is \emph{cofibrantly generated} if there are a set $I$ of cofibrations and a set $J$ of trivial cofibrations (that is, morphisms that are both cofibrations and weak equivalences) that permit the small object argument (with respect to some cardinal $\kappa$), and a morphism is a (trivial) fibration if and only if it satisfies the right lifting property with respect to all morphisms in $J$ (resp. $I$).

Let $I$-cell denote the class of transfinite compositions of pushouts of morphisms in $I$, and let $I$-cof denote retracts of such \cite{hovey} (2.1.9). In order to run the small object argument, we will assume the domains $K$ of the morphisms in $I$ (and $J$) are $\kappa$-small relative to $I$-cell (resp. $J$-cell).  In other words, given a regular cardinal $\lambda \geq \kappa$ and any $\lambda$-sequence $X_0\to X_1\to \cdots$ formed of morphisms $X_\beta \to X_{\beta+1}$ in $I$-cell, the map of sets
\[\nicexy{\colim_{\beta < \lambda} \M\bigl(K,X_\beta\bigr) \ar[r] 
& \M\bigl(K,\colim_{\beta < \lambda} X_\beta\bigr)}\]
is a bijection. An object is \emph{small} if there is some $\kappa$ for which it is $\kappa$-small. We will say that a model category is \emph{strongly cofibrantly generated} if the domains and codomains of $I$ and $J$ are small with respect to the entire category.

%\subsection{Semi-Model Categories}

In Section \ref{sec:homotopy-smith}, we will produce homotopy theories for operad-algebras valued in arrow categories equipped with some model structure. Depending on the colored operad and properties of $\M$, sometimes we will only have a semi-model structure on a category of algebras.  However, as shown in Section \ref{sec:appendix}, it still encodes the correct $\infty$-category.  A semi-model category satisfies axioms similar to those of a model category, but one only knows that morphisms {\em with cofibrant domain} admit a factorization into a trivial cofibration followed by a fibration, and one only knows that trivial cofibrations {\em with cofibrant domain} lift against fibrations. To the authors' knowledge, every result about model categories has a corresponding result for semi-model categories, often obtained by first cofibrantly replacing everything in sight (see, for example, \cite{bous-loc-semi}). The following is Definition 2.1 in \cite{bous-loc-semi}.

\begin{definition} \label{defn:semi}
A \textit{semi-model structure} on a category $\M$ consists of classes of weak equivalences $W$, fibrations $F$, and cofibrations $Q$ satisfying the following axioms:

\begin{enumerate}
\item[M1] Fibrations are closed under pullback.
\item[M2] The class $W$ is closed under the two-out-of-three property.
\item[M3] $W,F,Q$ are all closed under retracts.
\item[M4] 
\begin{enumerate}
\item[i] Cofibrations have the left lifting property with respect to trivial fibrations.
\item[ii] Trivial cofibrations whose domain is cofibrant have the left lifting property with respect to fibrations.
\end{enumerate}
\item[M5] 
\begin{enumerate}
\item[i] Every morphism in $\M$ can be functorially factored into a cofibration followed by a trivial fibration. 
\item[ii] Every morphism whose domain is cofibrant can be functorially factored into a trivial cofibration followed by a fibration.
\end{enumerate} 
\end{enumerate}

If, in addition, $\M$ is bicomplete, then we call $\M$ a \textit{semi-model category}.
$\M$ is said to be \textit{cofibrantly generated} if there are sets of morphisms $I$ and $J$ in $\M$ such that the class of (trivial) fibrations is characterized by the right lifting property with respect to $J$ (resp. $I$), the domains of $I$ are small relative to $I$-cell, and the domains of $J$ are small relative to morphisms in $J$-cell whose domain is cofibrant. 
\end{definition}

An adjunction with left adjoint $L$ and right adjoint $R$ is denoted by $L \dashv R$.

\begin{definition}\label{quillen.pair}
Suppose  $L : \M \adjoint \N : R$ is an adjunction between (semi-)model categories.
\begin{enumerate}
\item We call $L \dashv R$ a \emph{Quillen adjunction} if the right adjoint $R$ preserves fibrations and trivial fibrations.  In this case, we call $L$ a \emph{left Quillen functor} and $R$ a \emph{right Quillen functor}.
\item We call a Quillen adjunction $L \dashv R$ a \emph{Quillen equivalence} if, for each morphism $f : LX \to Y \in \N$ with $X$ cofibrant in $\M$ and $Y$ fibrant in $\N$, $f$ is a weak equivalence in $\N$ if and only if its adjoint $f^{\#} : X \to RY$ is a weak equivalence in $\M$. % In this case, we call $L$ a \emph{left Quillen equivalence} and $R$ a \emph{right Quillen equivalence}.
\end{enumerate}
\end{definition}

\begin{definition}\label{pushout-corner}
Suppose $\M$ is a category with pushouts and pullbacks.
\begin{enumerate}
\item Given a solid-arrow commutative diagram
\[\begin{tikzcd}
A \ar[bend right]{ddr} \ar[bend left]{drr} \ar[densely dotted]{dr}[description]{f \pback g} &&\\
& B \timesover{D} C \ar{d} \ar{r} & C \ar{d}{g}\\
& B \ar{r}{f} & D
\end{tikzcd}\]
in $\M$ in which the square is a pullback, the unique dotted induced morphism is denoted $f \pback g$ and called the \emph{pullback corner morphism} of $f$ and $g$.
\item Given a solid-arrow commutative diagram
\[\nicexy@R-.5cm{A \ar[d] \ar[r] & C \ar[d] \ar@/^1pc/[ddr]^-{g} & \\ 
B \ar[r] \ar@/_1pc/[drr]_-{f} & B \coprodover{A} C 
 \ar@{}[dr]^(.15){}="a"^(.9){}="b" \ar@{.>} "a";"b" |-{f \pcorner g} & \\ && D}\]
in $\M$ in which the square is a pushout, the unique dotted induced morphism is denoted $f \pcorner g$ and called the \emph{pushout corner morphism} of $f$ and $g$. 
\end{enumerate}
\end{definition}

In the next definition, we follow simplicial notation $0\to 1$ so the reader can distinguish source and target at a glance.

\begin{definition}\label{mapproducts}
Suppose $(\M,\otimes,\tensorunit)$ is a monoidal category with pushouts.  Suppose $f : X_0 \to X_1$ and $g : Y_0 \to Y_1$ are morphisms in $\M$.  The pushout corner morphism 
\[\begin{tikzcd}
X_0 \otimes Y_0 \ar{d}{1 \otimes g} \ar{r}{f \otimes 1} & X_1 \otimes Y_0 \ar{d} \ar[bend left]{ddr}{1 \otimes g} &\\
X_0 \otimes Y_1 \ar[bend right=15]{drr}{f \otimes 1} \ar{r} & 
\scalebox{.8}{$(X_0\otimes Y_1) \coprodover{X_0\otimes Y_0} (X_1\otimes Y_0)$} \ar[shorten <=-3ex]{dr}[pos=.3]{f \boxprod g} &\\
&& X_1 \otimes Y_1 
\end{tikzcd}\]
of $f \otimes 1$ and $1 \otimes g$ is denoted $f \boxprod g$ and called the \emph{pushout product} of $f$ and $g$.
\end{definition}

\begin{definition}\label{ppax}
A symmetric monoidal closed category $\M$ equipped with a model structure is called a \emph{monoidal model category} if it satisfies the following \emph{pushout product axiom} \cite{ss} (3.1): 

\begin{itemize}
\item Given any cofibrations $f:X_0\to X_1$ and $g:Y_0\to Y_1$, the pushout product morphism
\[\nicexy{(X_0\otimes Y_1) \coprodover{X_0\otimes Y_0} (X_1\otimes Y_0) 
\ar[r]^-{f\boxprod g} & X_1\otimes Y_1}\]
is a cofibration. If, in addition, either $f$ or $g$ is a weak equivalence, then $f\boxprod g$ is a trivial cofibration.
\end{itemize}

Additionally, in order to guarantee that the unit $\tensorunit$ descends to the unit in the homotopy category, it is sometimes convenient to assume the \textit{unit axiom} \cite{hovey} (4.2.6): if $Q\tensorunit \to \tensorunit$ is a cofibrant replacement, then for any cofibrant object $X$, the induced morphism $Q\tensorunit \otimes X \to \tensorunit\otimes X \cong X$ is a weak equivalence. Since $(-)\otimes X$ is a left Quillen functor, if the unit axiom holds for one cofibrant replacement of $\tensorunit$, then it holds for any cofibrant replacement of $\tensorunit$.
\end{definition}

\subsection{Arrow Categories}

\begin{definition}
A \emph{lax monoidal functor} $F : \M \to \N$ between two monoidal categories is a functor equipped with structure morphisms
\[\nicexy{FX \otimes FY \ar[r]^-{F^2_{X,Y}} & F(X \otimes Y), \quad \tensorunit^{\N} \ar[r]^-{F^0} & F\tensorunit^{\M}}\]
for $X$ and $Y$ in $\M$ that are associative and unital in a suitable sense, as discussed in \cite{maclane} (XI.2), where this notion is referred to simply as a \emph{monoidal functor}. % page 134 of the pdf, page 255 of book
If, furthermore, $\M$ and $\N$ are symmetric monoidal categories, and $F^2$ is compatible with the symmetry isomorphisms, then $F$ is called a \emph{lax symmetric monoidal functor}.  If the structure morphisms $F^2$ and $F^0$ are isomorphisms (resp., identity morphisms), then $F$ is called a \emph{strong monoidal functor} (resp., \emph{strict monoidal functor}).
\end{definition}

We now recall the two monoidal structures on the arrow category from \cite{hovey-smith}.

\begin{definition}\label{def:arrowcat}
Suppose $(\M,\otimes,\tensorunit)$ is a symmetric monoidal category with pushouts. 
\begin{enumerate}
\item The \emph{arrow category} $\arrowm$ is the category whose objects are morphisms in $\M$, in which a morphism $\alpha : f \to g$ is a commutative square
\begin{equation}\label{map-in-arrowcat}
\nicexy@R-10pt{X_0 \ar[r]^-{\alpha_0} \ar[d]_f & Y_0 \ar[d]^-{g}\\ 
X_1 \ar[r]^-{\alpha_1} & Y_1}
\end{equation}
in $\M$.  We will also write $\Ev_0f = X_0$, $\Ev_1f = X_1$, $\Ev_0 \alpha = \alpha_0$, and $\Ev_1 \alpha = \alpha_1$.  The definition of $\arrowm$ does not require a monoidal structure on $\M$. 
\item The \emph{tensor product monoidal structure} on $\arrowm$ is given by the monoidal product
\[\nicexy{X_0 \otimes Y_0 \ar[r]^-{f \otimes g} & X_1 \otimes Y_1}\]
for $f : X_0 \to X_1$ and $g : Y_0 \to Y_1$.  The arrow category equipped with this monoidal structure is denoted by $\arrowmtensor$.  The monoidal unit is $\Id : \tensorunit \to \tensorunit$.
\item The \emph{pushout product monoidal structure} on $\arrowm$ is given by the pushout product
\[\nicexy{(X_0 \otimes Y_1) \coprodover{X_0 \otimes Y_0} (X_1 \otimes Y_0) \ar[r]^-{f \boxprod g} & X_1 \otimes Y_1}\]
for $f : X_0 \to X_1$ and $g : Y_0 \to Y_1$.  The arrow category equipped with this monoidal structure is denoted by $\arrowmpp$.  The monoidal unit is $\varnothing \to \tensorunit$.
\item Defining $L_0(X) = (\Id : X \to X)$ and $L_1(X) = (\varnothing \to X)$ for $X \in \M$, there are adjunctions
\begin{equation}\label{lev}
\nicexy{\M \ar@<2pt>[r]^-{L_0} & \arrowmtensor \ar@<2pt>[l]^-{\Ev_0} & \M \ar@<2pt>[r]^-{L_1} & \arrowmpp \ar@<2pt>[l]^-{\Ev_1}}
\end{equation}
with left adjoints on top and all functors strict symmetric monoidal.
\end{enumerate}
\end{definition}

\subsection{Injective Model Structure}

The following result about the injective model structure is from \cite{hovey-smith} (2.1 and 2.2).

\begin{theorem}\label{injective-model}
Suppose $\M$ is a model category.
\begin{enumerate}
\item There is a model structure on $\arrowm$, called the \emph{injective model structure}, in which a morphism $\alpha : f \to g$ as in \eqref{map-in-arrowcat} is a weak equivalence (resp., cofibration) if and only if $\alpha_0$ and $\alpha_1$ are weak equivalences (resp., cofibrations) in $\M$.  A morphism $\alpha$ is a (trivial) fibration if and only if $\alpha_1$ and the pullback corner morphism \[\nicexy{X_0 \ar[r]^-{\alpha_1 \pback\, g} & X_1 \timesover{Y_1} Y_0}\]
are (trivial) fibrations in $\M$.  Note that this implies that $\alpha_0$ is also a (trivial) fibration.  The arrow category equipped with the injective model structure is denoted by $\arrowminj$.
\item If $\M$ is cofibrantly generated, then so is $\arrowminj$.
\item If $\M$ is a monoidal model category, then $\arrowmtensor$ equipped with the injective model structure is a monoidal model category, denoted $\arrowmtensorinj$.
\item If $\M$ satisfies the unit axiom, then so does $\arrowmtensor$.
\end{enumerate}
\end{theorem}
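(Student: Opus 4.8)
The plan is to recognize the arrow category as the diagram category $\M^{[1]}$, where $[1]$ denotes the category $\{0 \to 1\}$, and to obtain the injective model structure as a \emph{Reedy} model structure for a suitable Reedy structure on $[1]$. I would equip $[1]$ with the Reedy structure in which $0$ has degree $1$ and $1$ has degree $0$, so that the unique non-identity morphism $0 \to 1$ is degree-lowering; thus the direct subcategory consists of identities only and the inverse subcategory contains $0 \to 1$. Since $\M$ is a model category and $\arrowm$ is bicomplete (limits and colimits being computed entrywise), the general existence theorem for Reedy model structures produces a model structure on $\M^{[1]} = \arrowm$ whose weak equivalences are the entrywise weak equivalences and whose cofibrations and fibrations are detected by the relative latching and matching maps. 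Because the direct subcategory is trivial, every latching object is the initial object $\varnothing$, so the relative latching map at an object $d$ is simply $\alpha_d$; hence $\alpha$ is a Reedy cofibration if and only if $\alpha_0$ and $\alpha_1$ are cofibrations in $\M$, as asserted. Dually the matching objects are $M_1(X) = *$ and $M_0(X) = X_1$, so the relative matching maps are $\alpha_1$ at the object $1$ and the pullback corner map $X_0 \to X_1 \timesover{Y_1} Y_0$ at the object $0$; this is exactly the stated characterization of (trivial) fibrations. The parenthetical remark that $\alpha_0$ is then also a (trivial) fibration follows by factoring $\alpha_0$ as the corner map followed by the projection $X_1 \timesover{Y_1} Y_0 \to Y_0$, the latter being a pullback of $\alpha_1$. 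This settles part (1).

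For part (2) the delicate point is that the injective structure is \emph{not} the projective one, so its generating sets are not the images of $I$ and $J$ under $L_0$ and $L_1$: by the adjunctions of \eqref{lev}, a map has the right lifting property against $\{L_0 i, L_1 i : i \in I\}$ exactly when $\alpha_0$ and $\alpha_1$ are both trivial fibrations, which detects the projective trivial fibrations. Instead I would take as generating cofibrations $I' = \{L_1 i : i \in I\} \cup \{\kappa(i) : i \in I\}$, where $\kappa(i)$ is the morphism of $\arrowm$ from the object $(i \colon A \to B)$ to the object $(\Id \colon B \to B)$ given by $i$ at level $0$ and the identity at level $1$; this is an injective cofibration. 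Using $L_1 \dashv \Ev_1$, lifting against the $L_1 i$ detects that $\alpha_1$ is a trivial fibration, and a direct inspection of the lifting problem against $\kappa(i)$ shows it to be equivalent to the right lifting property of the corner map $X_0 \to X_1 \timesover{Y_1} Y_0$ against $i$. Hence the right lifting property against $I'$ characterizes precisely the injective trivial fibrations, and the analogous set $J' = \{L_1 j : j \in J\} \cup \{\kappa(j) : j \in J\}$ characterizes the injective fibrations. Since colimits in $\arrowm$ are entrywise and the domains appearing in $I'$ and $J'$ are built from small objects of $\M$, the small object argument applies, so the structure of part (1) is cofibrantly generated by $I'$ and $J'$.

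For part (3) I would use that both evaluation functors $\Ev_0, \Ev_1 \colon \arrowmtensor \to \M$ are strict monoidal for the tensor product monoidal structure and preserve all colimits, both being computed entrywise; consequently they commute with the formation of pushout products, so $\Ev_d(\alpha \boxprod \beta) = (\Ev_d \alpha) \boxprod (\Ev_d \beta)$ in $\M$ for $d = 0, 1$. If $\alpha$ and $\beta$ are injective cofibrations, each $\Ev_d \alpha$ and $\Ev_d \beta$ is a cofibration in $\M$, so the pushout product axiom (Definition \ref{ppax}) in $\M$ makes each $\Ev_d(\alpha \boxprod \beta)$ a cofibration; thus $\alpha \boxprod \beta$ is an entrywise, i.e. injective, cofibration. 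If moreover one of $\alpha, \beta$ is an injective weak equivalence, the corresponding evaluations are trivial cofibrations and the same levelwise application of the axiom shows $\alpha \boxprod \beta$ to be an injective trivial cofibration. As Definition \ref{ppax} asks only for the pushout product axiom, this proves that $\arrowmtensorinj$ is a monoidal model category.

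I expect the main obstacle to be part (2): correctly identifying the generating (trivial) cofibrations for the injective — rather than the projective — structure, and verifying that lifting against the auxiliary generators $\kappa(i)$ is exactly the pullback corner map condition; parts (1) and (3) are then essentially bookkeeping with the Reedy machinery and the entrywise behavior of $\Ev_0$ and $\Ev_1$.
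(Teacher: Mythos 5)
Your proposal is correct and takes essentially the same approach as the paper's source: the paper does not prove Theorem \ref{injective-model} itself but imports it from \cite{hovey-smith} (2.1 and 2.2), where the injective structure is likewise the Reedy structure on $\M^{[1]}$ with the domain object placed in higher degree, and your generating sets $\{L_1 i\} \cup \{\kappa(i)\}$ are exactly the generating (trivial) cofibrations the paper records in Prop.~\ref{arrrowm-strongly}.  The one point you gloss over---smallness of the codomains $B$ occurring in the domains of the $\kappa(i)$---is covered by the standard fact that codomains of maps in $I$ are small relative to $I$-cell, so no genuine gap remains.
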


\begin{proof}
This model structure is a special case of the injective model structure on a diagram category \cite{barwickSemi} (2.16). Since the indexing category $\bullet \to \bullet$ is so simple, we can directly write down the generating (trivial) cofibrations and hence avoid the need to assume $\M$ is combinatorial, as in \cite{white-commutative} (5.5.1). The generating cofibrations are of the form $L_1 i$ (where $i\in I$) and unit morphisms $\alpha_i: i\to U_1 \Ev_1 i$, where $U_1$ is the right adjoint of $\Ev_1$ given by $U_1(X) = 1_X$. The generating trivial cofibrations are analogous, with $j\in J$ instead of $i\in I$. A morphism $\beta: f\to g$ has the right lifting property with respect to $L_1 i$ if and only if $\Ev_1 \beta$ has the right lifting property with respect to $i$, and $\beta$ has the right lifting property with respect to $\alpha_i$ if and only if $\Ev_0 f\to \Ev_1 f \times_{\Ev_1 g} \Ev_0 g$ has the right lifting property with respect to $i$.  Thus these sets generate the injective model structure. The pushout product axiom and the unit axiom on $\arrowmtensorinj$ follows from the same on $\M$ \cite{barwickSemi} (4.51).
\end{proof}

\subsection{Projective Model Structure}

The following result about the projective model structure is from \cite{hovey-smith} (3.1).

\begin{theorem}\label{hovey-projective}
Suppose $\M$ is a model category.
\begin{enumerate}
\item There is a model structure on $\arrowm$, called the \emph{projective model structure}, in which a morphism $\alpha : f \to g$ as in \eqref{map-in-arrowcat} is a weak equivalence (resp., fibration) if and only if $\alpha_0$ and $\alpha_1$ are weak equivalences (resp., fibrations) in $\M$.  A morphism $\alpha$ is a (trivial) cofibration if and only if $\alpha_0$ and the pushout corner morphism
\[\nicexy{X_1 \coprodover{X_0} Y_0 \ar[r]^-{\alpha_1 \pcorner g} & Y_1}\]
are (trivial) cofibrations in $\M$.  Note that this implies that $\alpha_1$ is also a (trivial) cofibration.  The arrow category equipped with the projective model structure is denoted by $\arrowmproj$.
\item If $\M$ is cofibrantly generated, then so is $\arrowmproj$.
\item If $\M$ is a monoidal model category, then $\arrowmpp$ equipped with the projective model structure is a monoidal model category, denoted $\arrowmppproj$.
\item If $\M$ satisfies the unit axiom, then so does $\arrowmpp$.
\end{enumerate}
\end{theorem}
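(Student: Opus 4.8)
The plan is to obtain all three parts by exploiting the duality between the projective and injective structures together with the evaluation adjunctions \eqref{lev}, and then to reduce the one genuinely monoidal point—the pushout product axiom—to a short computation on generators.

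For part (1) I would observe that the projective structure on $\arrowm$ is formally dual to the injective one. Reversing every arrow identifies $(\arrowm)^{\op}$ with $\overrightarrow{\M^{\op}}$, and under this identification the injective model structure of Theorem \ref{injective-model} on $\overrightarrow{\M^{\op}}$—objectwise weak equivalences and cofibrations, with fibrations detected by a pullback corner map—corresponds exactly to the opposite of the claimed projective structure on $\arrowm$: objectwise weak equivalences and fibrations, with the pullback corner map of $\M^{\op}$ becoming the pushout corner map $X_1 \coprodover{X_0} Y_0 \to Y_1$ of $\M$. Thus the existence of the structure and the stated characterizations of cofibrations and fibrations follow by applying Theorem \ref{injective-model} to $\M^{\op}$. (Equivalently, one recognizes $\arrowm = \M^{[1]}$ as the diagram category of a direct Reedy category, for which Reedy equals projective and the relative latching maps at the two objects are $\alpha_0$ and the pushout corner map.) The parenthetical remark that $\alpha_1$ is then a (trivial) cofibration follows by factoring $\alpha_1$ as a cobase change of $\alpha_0$ followed by the pushout corner map, both of which are (trivial) cofibrations.

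For part (2) I would invoke, under the hypothesis that $\M$ is cofibrantly generated, the standard construction of the projective model structure on a diagram category \cite{hirschhorn,hovey}: for the indexing category $[1]$ it produces a cofibrantly generated structure whose generating (trivial) cofibrations are the images of the generators of $\M$ under the left adjoints to the evaluation functors. By \eqref{lev} these left adjoints are $L_0$ and $L_1$, so $\arrowmproj$ is generated by $L_0(I) \cup L_1(I)$ and by $L_0(J) \cup L_1(J)$. In particular $L_0$ and $L_1$ are left Quillen and hence preserve cofibrations and trivial cofibrations.

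Part (3) is where the work lies. Since $\arrowmpp$ is symmetric monoidal closed, $(-)\boxprod(-)$ preserves colimits in each variable, so by the usual reduction it suffices to verify the pushout product axiom on generating (trivial) cofibrations, giving three cases indexed by the pairs $(L_0,L_0)$, $(L_0,L_1)$, $(L_1,L_1)$ (the fourth being symmetric to the second). The key point is that, although $L_0$ fails to preserve the monoidal unit $\varnothing \to \tensorunit = L_1(\tensorunit)$ of $\arrowmpp$, both $L_0$ and $L_1$ strictly preserve the monoidal product: using that $\varnothing \otimes (-) = \varnothing$ and that a pushout along an isomorphism is trivial, a direct computation with the pushout product formula gives, naturally in $X,Y$,
\[ L_0(X) \boxprod L_0(Y) = L_0(X \otimes Y), \qquad L_0(X) \boxprod L_1(Y) = L_0(X \otimes Y), \qquad L_1(X) \boxprod L_1(Y) = L_1(X \otimes Y). \]
Because $L_0$ and $L_1$ are left adjoints they preserve the pushouts defining the pushout product of a pair of maps, so, writing $\pcorner$ for the pushout product formed in $\arrowmpp$ and $\boxprod$ on the right for the pushout product formed in $\M$,
\[ L_0(i) \pcorner L_0(i') = L_0(i \boxprod i'), \qquad L_0(i) \pcorner L_1(i') = L_0(i \boxprod i'), \qquad L_1(i) \pcorner L_1(i') = L_1(i \boxprod i'). \]
By the pushout product axiom for $\M$ (Definition \ref{ppax}), $i \boxprod i'$ is a cofibration, and a trivial cofibration as soon as $i$ or $i'$ is; applying the left Quillen functors $L_0,L_1$ from part (2) then shows every pushout product of generators is a (trivial) projective cofibration, which establishes the axiom.

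I expect the main obstacle to be precisely this third step: correctly establishing the monoidal-product compatibility of $L_0$ and $L_1$ with the pushout product structure—where the asymmetry in the two monoidal structures (and the failure of $L_0$ to be unital) must be handled carefully—and justifying the reduction of the pushout product axiom to generators. If one does not wish to assume $\M$ cofibrantly generated, that reduction is unavailable and one must instead verify the axiom directly from the explicit pushout corner characterization of cofibrations in part (1), which is elementary but considerably more laborious.
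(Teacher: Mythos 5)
Your argument is sound as far as it goes, but note first that the paper offers no proof of this theorem at all: parts (1) and (2) are quoted from \cite{hovey-smith} (3.1), and part (3), in the generality stated here, is quoted from the authors' separate paper \cite{white-yau-arrowcat}, as the remark immediately following the theorem explains. Measured against that, your treatment of (1) by duality with Theorem \ref{injective-model} applied to $\M^{\mathrm{op}}$ (equivalently, the Reedy structure on $\M^{[1]}$ for the direct category $\{0 \to 1\}$, whose relative latching maps at the two objects are exactly $\alpha_0$ and the pushout corner map) is correct, as is the factorization of $\alpha_1$ as a cobase change of $\alpha_0$ followed by the corner map. Your part (2) agrees with what the paper itself uses later: the generating sets $L_0I \cup L_1I$ and $L_0J \cup L_1J$ appear verbatim in the proof of Proposition \ref{arrrowm-strongly}. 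Your computations in part (3) are also correct: $L_0$ and $L_1$ into $\arrowmpp$ strictly preserve the monoidal product (with $L_0$ failing only unitality, since the unit is $L_1\tensorunit$), both are left Quillen for the projective structure, and the pushout products of images of generators are $L_0$ or $L_1$ of pushout products in $\M$. This is in substance Hovey's original argument for the cofibrantly generated case.

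The genuine shortfall is in part (3) as actually stated: it assumes only that $\M$ is a monoidal model category, with no cofibrant generation, so your reduction to generators is unavailable precisely where the paper's version exceeds Hovey's. Your closing remark that the direct verification is ``elementary but considerably more laborious'' is not a proof, and it understates the difficulty: one must show that for projective cofibrations $\alpha$ and $\beta$ the pushout product $\alpha \boxprod_2 \beta$ formed in $\arrowmpp$ has $\Ev_0$ a cofibration and has cofibration pushout corner map, using only the corner-map characterization from part (1). That verification is the content of \cite{white-yau-arrowcat} (cited as Theorem A in this paper), and it involves nontrivial manipulations of iterated corner maps of exactly the kind this paper carries out in Lemmas \ref{lemma-heart1} and \ref{lemma-heart2} (e.g., Reedy-style arguments over a three-object zigzag to control the $\Ev_0$ of a pushout product). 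So your proposal correctly proves parts (1) and (2) and Hovey's version of part (3), but leaves the strengthened part (3) asserted here unestablished.
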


\begin{proof}
(1) and (2) follow from \cite{hirschhorn} (11.6.1). For (3), Hovey \cite{hovey-smith} (3.1) had the additional assumption that $\M$ be cofibrantly generated. However, the authors proved in \cite{white-yau-arrowcat} that if $\M$ is a monoidal model category, then so is $\arrowmppproj$. Lastly, for (4), note that a cofibrant replacement for the unit $\emptyset \to \tensorunit$ is $L_1(Q\tensorunit): \varnothing\to Q\tensorunit$. If $f$ is cofibrant in $\arrowmppproj$ (equivalently, a cofibration between cofibrant objects), then $L_1(Q\tensorunit)\boxprod f \to f$ is the same as $Q\tensorunit\otimes f \to f$.  Thus the unit axiom on $\arrowmpp$ follows from the unit axiom on $\M$.
\end{proof}

For a category $\M$ with all small limits and colimits, recall from \cite{hovey} (Sections 1.1, 6.1) that $\M$ is \textit{pointed} if the unique morphism $\emptyset \to \ast$ is an isomorphism. In such a category, we define the \textit{cokernel} of a morphism $f: X_0\to X_1$ to be the morphism $\coker f: X_1\to Z$ defined by the following pushout:
\[
\nicexy{X_0 \ar[r]^f \ar[d] & X_1 \ar[d]^{\coker f} \\ \ast \ar[r] & Z}
\]
Dually, the \textit{kernel} of $f: X_0\to X_1$ is the morphism $\ker f: A \to X_0$ defined by the following pullback:
\[
\nicexy{A \ar[r] \ar[d]_{\ker f} & \ast \ar[d] \\
X_0 \ar[r]^f & X_1}
\]
For the left adjoints $L_0$ and $L_1$ in \eqref{lev}, we note the following equalities for each object $X$.
\begin{equation}\label{Lcoker}
\begin{split}
\ker \big(L_0(X)\big) &= \ker\big(\Id : X \to X\big) = \big(\emptyset \to X\big) = L_1(X)\\
\coker \big(L_1(X)\big) &= \coker\big(\emptyset \to X\big) = \big(\Id : X \to X\big) = L_0(X)
\end{split}
\end{equation}

%\subsection{Quillen Adjunctions and Quillen Equivalences}

Most of the observations in Proposition \ref{coker-ker-pair} are from \cite{hovey-smith} (1.4, 4.1, 4.3).  We provide proofs here for completeness.

\begin{proposition}\label{coker-ker-pair}
Suppose $\M$ is a pointed symmetric monoidal category with all small limits and colimits.
\begin{enumerate}
\item\label{coker-i} The cokernel is a strictly unital strong symmetric monoidal functor from $\arrowmpp$ to $\arrowmtensor$ whose right adjoint is the kernel. 
\item\label{coker-ii} The strong symmetric monoidality of the cokernel induces a strictly unital lax symmetric monoidal structure on the kernel such that the adjunction $(\coker,\ker)$ is monoidal.
\item\label{coker-iii} If $\M$ is also a model category, then $(\coker, \ker)$ is a Quillen adjunction.
\item\label{coker-iv} If $\M$ is a stable model category \cite{hovey} (Chapter 7), then $(\coker, \ker)$ is a Quillen equivalence.
\end{enumerate}
\end{proposition}

\begin{proof}
For \eqref{coker-i}, first note that $\coker$ preserves the units since the cokernel of $\emptyset\to \tensorunit$ is $\Id_\tensorunit$. Next, it is strong monoidal because, given $f: X_0\to X_1$ and $g: Y_0\to Y_1$ we can form the following commutative diagram:
\[
\nicexy{
X_1\otimes Y_1 & X_0\otimes Y_1 \ar[r] \ar[l] & \ast \ar@{=}[d] \\
X_1\otimes Y_0 \ar@{=}[d]\ar[u] & X_0\otimes Y_0 \ar[u] \ar[d] \ar[r] \ar[l] & \ast \ar@{=}[d] \\
X_1\otimes Y_0 & X_1\otimes Y_0 \ar[r] \ar@{=}[l] & \ast \\
}
\]
Vertical pushouts yield a span whose pushout is $\coker(f\boxprod g)$.  Horizontal pushouts yield a span whose pushout is $\coker f \otimes \coker g$. Since pushouts commute, we obtain the natural isomorphism
\begin{equation}\label{cokertwo}
\begin{tikzcd}[column sep=large]
(\coker f) \otimes (\coker g) \ar{r}{\coker^2_{f,g}}[swap]{\cong} & \coker(f\boxprod g).
\end{tikzcd}
\end{equation}
We take this isomorphism as the $(f,g)$-component of the monoidal constraint for $\coker$.
Using similar reasoning and the universal property of pushouts, one can show that the symmetric monoidal coherence diagrams commute. 

For the statement that $\coker$ is left adjoint to $\ker$, note that a morphism $\alpha$ from $\coker f$ to $g$ is given by the diagram below.
\[
\nicexy{
X_0 \ar[r]^f \ar[d] & X_1 \ar[r]^{\alpha_0} \ar[d]^{\coker f} & Y_0 \ar[d]^g \\
\ast \ar[r] & Z \ar[r]_{\alpha_1} & Y_1}
\]
These data are equivalent to a morphism from $f$ to $\ker g$, since $A$ is a pullback and $Z$ is a pushout:
\[
\nicexy{
X_0 \ar[r] \ar[d]_f & A \ar[r] \ar[d]^{\ker g} & \ast \ar[d] \\
X_1 \ar[r]_{\alpha_0} & Y_0 \ar[r]_{g} & Y_1}
\]
% Reason: given the picture shown, you get the dotted arrow because X0->0->Y1 is same as X0->X1->Y0->Y1, so there's an induced map to A. Going the other way, given the second diagram you automatically have alpha0. To get alpha1 we use that Z is a pushout. The lower diagram says X0->X1->Y0->Y1 is the same as X0->0->Y1 so in the upper diagram that induces Z->Y1

For \eqref{coker-ii}, first note that $\ker : \arrowmtensor \to \arrowmpp$ preserves the monoidal units because the kernel of $\Id : \tensorunit \to \tensorunit$ is $\emptyset \to \tensorunit$.  The monoidal constraint of the kernel at a pair of morphisms $f$ and $g$,
\[\ker^2_{f,g} : (\ker f) \boxprod (\ker g) \to \ker(f \otimes g),\]
is adjoint to the following composite, with $\coker^2$ the monoidal constraint in \eqref{cokertwo} and $\varepsilon : \coker \circ \ker \to \Id$ the counit of the adjunction.
\begin{equation}\label{kertwo}
\begin{tikzcd}[column sep=large]
\coker\big((\ker f) \boxprod (\ker g) \big) \ar{d}{\cong}[swap]{(\coker^2)^{-1}} &\\ 
\coker(\ker f) \otimes \coker(\ker g) \ar{r}{\varepsilon_f \,\otimes\, \varepsilon_g} & f \otimes g
\end{tikzcd}
\end{equation}
The lax symmetric monoidal axioms for the kernel follow from those for the cokernel and the adjunction.

The assertion that the adjunction $(\coker,\ker)$ is monoidal means that its unit and counit are monoidal natural transformations \cite{maclane} (XI.2).  To prove this, first note that by the above description of the adjunction, its unit and counit are the identity morphisms of the monoidal units in $\arrowmpp$ and $\arrowmtensor$, respectively.

To prove that the unit $\eta : \Id \to \ker\circ\coker$ is a monoidal natural transformation, it remains to show that the following diagram commutes for each pair of morphisms $f$ and $g$.
\[\begin{tikzcd}
f \boxprod g \ar{d}[swap]{\eta_f \,\boxprod\, \eta_g} \ar{r}{\eta_{f \boxprod\, g}} & \ker\big(\coker (f \boxprod g)\big)\\
\ker(\coker f) \boxprod \ker(\coker g) \ar{r}{\ker^2} & \ker\big( \coker f \otimes \coker g\big) \ar{u}[swap]{\ker(\cong)}
\end{tikzcd}\]
This diagram commutes because the adjoint of each composite is the identity morphism of $\coker(f \boxprod g)$.  For the long composite, this uses (i) the naturality of $(\coker^2)^{-1}$ and (ii) one of the triangle identities for the adjunction $(\coker,\ker)$ \cite{maclane} (IV.1 Theorem 1).

To prove that the counit $\varepsilon : \coker \circ \ker \to \Id$ is a monoidal natural transformation, it remains to show that the following diagram commutes.
\[\begin{tikzcd}[cells={nodes={scale=.9}}, column sep=huge]
\coker(\ker f) \otimes \coker(\ker g) \ar{d}{\cong}[swap]{\coker^2} \ar{r}{\varepsilon_f \,\otimes\, \varepsilon_g} & f \otimes g\\
\coker\big(\ker f \boxprod \ker g\big) \ar{r}{\coker(\ker^2_{f,g})} & 
\coker\big(\ker(f \otimes g)\big) \ar{u}[swap]{\varepsilon_{f \otimes g}}
\end{tikzcd}\]
This diagram commutes because, starting from the lower-left corner to $f \otimes g$, each composite is adjoint to $\ker^2_{f,g}$.

For \eqref{coker-iii}, let $\alpha$ be a (trivial) cofibration and note that $\coker \alpha$ is the colimit of a morphism of pushout diagrams. That morphism of pushout diagrams is a Reedy (trivial) cofibration.  The colimit functor is left Quillen as a functor from the Reedy model structure to the underlying category \cite{hovey} (Section 5.2). Hence, $\coker \alpha$ is again a (trivial) cofibration, so $\coker$ is a left Quillen functor. See Lemma \ref{lemma-heart1} for an analogous proof.

For \eqref{coker-iv}, we must prove that, if $f$ is cofibrant in $\arrowmpp$ (so, a cofibration of cofibrant objects) and $g$ is fibrant in $\arrowmtensor$ (so, a fibration of fibrant objects), then $\alpha: \coker f \to g$ is a weak equivalence if and only if its adjoint $\beta: f\to \ker g$ is a weak equivalence \cite{hovey} (1.3.12). We display both morphisms:
\[
\nicexy{X_1 \ar[r]^{\coker f} \ar[d]_{\alpha_0} & Z \ar[d]^{\alpha_1} & & X_0 \ar[r]^f \ar[d]_{\beta_0} & X_1 \ar[d]^{\beta_1 = \alpha_0} \\
Y_0 \ar[r]_g & Y_1 & & A \ar[r]_{\ker g} & Y_0}
\]
In the homotopy category, these data give rise to fiber and cofiber sequences.  Since $\M$ is stable, every fiber sequence is canonically isomorphic to a cofiber sequence \cite{hovey} (Chapter 7).  We can extend to the right and realize $\alpha$ and $\beta$ as giving a morphism of cofiber sequences in the homotopy category:
\[
\nicexy{
X_0 \ar[r]^f \ar[d]_{\beta_0} & X_1 \ar[r]^{\coker f} \ar[d]^{\beta_1 = \alpha_0} & Z \ar[r] \ar[d]^{\alpha_1} & \Sigma X_0 \ar[d]^{\Sigma \beta_0} \\
A \ar[r]_{\ker g} & Y_0 \ar[r]_g & Y_1 \ar[r] & \Sigma A
}
\]
If either $\alpha$ or $\beta$ is a weak equivalence, then so is the other, by the two out of three property. Hence, $\coker$ and $\ker$ form a Quillen equivalence.
\end{proof}

\begin{proposition}\label{arrrowm-strongly}
Suppose $\M$ is a cofibrantly generated model category in which the domains and the codomains of all the generating cofibrations and the generating trivial cofibrations are small in $\M$.  Then $\arrowminj$ and $\arrowmproj$ are both strongly cofibrantly generated model categories.
\end{proposition}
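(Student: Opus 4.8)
The plan is to exploit that cofibrant generation is \emph{already} supplied by Theorems \ref{injective-model} and \ref{hovey-projective}, so that the only thing left to establish is the clause distinguishing \emph{strongly} cofibrantly generated from merely cofibrantly generated: that the \emph{domains} of a generating set of cofibrations and of trivial cofibrations are small with respect to all of $\arrowm$, rather than merely relative to the cellular subcategory. I would first pin down explicit generating sets for each structure, read off their domains, and then reduce smallness in $\arrowm$ of such an object to entrywise smallness in $\M$ of its two vertices.

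For the projective structure the generating sets may be taken to be $\{L_0 i, L_1 i : i \in I\}$ and $\{L_0 j, L_1 j : j \in J\}$, where $I$ and $J$ are the generating (trivial) cofibrations of $\M$ and $L_0, L_1$ are the left adjoints of \eqref{lev}; indeed, by the adjunctions $L_0 \dashv \Ev_0$ and $L_1 \dashv \Ev_1$, a map $\alpha$ has the right lifting property against $\{L_0 i, L_1 i\}$ iff both $\alpha_0$ and $\alpha_1$ are trivial fibrations, which by Theorem \ref{hovey-projective} are exactly the projective trivial fibrations. The domains appearing are $L_0 A = (A \xrightarrow{\id} A)$ and $L_1 A = (\varnothing \to A)$, with $A$ a domain of a map in $I \cup J$. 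For the injective structure I would use $\{L_1 i, p_i : i \in I\}$ and $\{L_1 j, p_j : j \in J\}$, where for $i : A \to B$ the map $p_i : (A \xrightarrow{i} B) \to (B \xrightarrow{\id} B)$ has components $\Ev_0 p_i = i$ and $\Ev_1 p_i = \id_B$. A short diagram chase shows that $\alpha$ has the right lifting property against $L_1 i$ (resp.\ against $p_i$) iff $\alpha_1$ (resp.\ the pullback corner map $X_0 \to X_1 \timesover{Y_1} Y_0$) has the right lifting property against $i$; ranging over $I$ this recovers precisely the injective trivial fibrations of Theorem \ref{injective-model}, and over $J$ the injective fibrations. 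Here the domains are $(\varnothing \to A)$ and $(A \xrightarrow{i} B)$, so that both the \emph{domains} and the \emph{codomains} of the maps in $I \cup J$ enter, which is exactly why the hypothesis requires both to be small.

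The heart of the argument is the reduction of smallness to the two vertices. Colimits in the functor category $\arrowm = \M^{[1]}$ are computed vertexwise, so $\Ev_k$ preserves them for $k = 0,1$. Moreover, for any object $W = (W_0 \to W_1)$ and any $Z \in \arrowm$, the hom-set $\arrowm(W, Z)$ is the pullback in $\set$ of
\[ \M(W_0, \Ev_0 Z) \longrightarrow \M(W_0, \Ev_1 Z) \longleftarrow \M(W_1, \Ev_1 Z), \]
the left-hand map being post-composition with the structure map of $Z$ and the right-hand map pre-composition with the structure map of $W$. Now assume $W_0$ and $W_1$ are $\kappa$-small in $\M$, let $\lambda \geq \kappa$ be regular, and let $(Z_\beta)_{\beta<\lambda}$ be a $\lambda$-sequence in $\arrowm$ with colimit $Z_\infty$. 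Vertexwise smallness identifies each of the three $\M$-hom functors evaluated at $Z_\infty$ with the colimit over $\beta$ of its values at $Z_\beta$; since a pullback is a finite limit and hence commutes with the filtered colimit along $\lambda$ in $\set$, the canonical map $\colimover{\beta<\lambda}\arrowm(W,Z_\beta) \to \arrowm(W,Z_\infty)$ is a bijection. Thus $W$ is $\kappa$-small in $\arrowm$. Applying this to the domains identified above, whose vertices are $\varnothing$ (small, since $\M(\varnothing,-)$ is constant at a point) together with domains and codomains of maps in $I \cup J$ (small by hypothesis), shows that the domains of both generating sets are small with respect to the entire category $\arrowm$. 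Since the two model structures exist and are cofibrantly generated by Theorems \ref{injective-model} and \ref{hovey-projective}, these explicit generating sets with small domains exhibit $\arrowminj$ and $\arrowmproj$ as strongly cofibrantly generated.

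The main obstacle is really twofold, both located in the injective case. First, the injective generating cofibrations are not merely levelwise images of $I$ and $J$; one must produce the pushout-corner-type generators $p_i$ and verify, via the diagram chase sketched above, that their right lifting property encodes the pullback corner condition of Theorem \ref{injective-model}, and thereby recognize that their domains $(A \xrightarrow{i} B)$ force the codomain hypothesis. Second, one must justify the interchange of the filtered colimit along $\lambda$ with the finite-limit presentation of $\arrowm(W, -)$; this is the step that actually consumes the two-vertex smallness and the only place where regularity of $\lambda$ is used. The projective case is strictly easier, since its generators are levelwise and their domains involve only $\varnothing$ together with the domains of $I \cup J$.
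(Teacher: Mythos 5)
Your proposal is correct and follows essentially the same route as the paper: it identifies the same explicit generating sets ($L_0I \cup L_1I$, $L_0J \cup L_1J$ for $\arrowmproj$, and the maps $L_1 i$ together with the squares $p_i : (A \xrightarrow{i} B) \to (B \xrightarrow{=} B)$ for $\arrowminj$, the latter as in \cite{hovey-smith} (2.2)) and then deduces smallness of their domains from the hypothesis on the domains and codomains of $I$ and $J$. The only difference is that you spell out two steps the paper leaves implicit or cites --- the lifting characterizations of the generators and the reduction of smallness in $\arrowm$ to vertexwise smallness via the pullback presentation of $\arrowm(W,-)$ and commutation of finite limits with filtered colimits in $\set$ --- and both are carried out correctly.
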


\begin{proof}
The generating (trivial) cofibrations in $\arrowminj$ are the morphisms $L_1 i$ and the morphisms
\[\nicexy@R-.4cm{A \ar[d]_-{i} \ar[r]^-{i} & B \ar[d]^-{=}\\ B \ar[r]^-{=} & B}\]
for $i \in I$ (resp., $i \in J$) \cite{hovey-smith} (2.2).  The generating (trivial) cofibrations in $\arrowmproj$ are the morphisms $L_0I \cup L_1I$ (resp., $L_0J \cup L_1J$).  So the smallness of the domains and codomains of the generating (trivial) cofibrations in $\arrowminj$ and $\arrowmproj$ follows from our assumption on the domains and the codomains in $I$ and $J$, since a morphism in the arrow category from $f$ into a transfinite composition is determined by morphisms from $\Ev_0 f$ and $\Ev_1 f$ into transfinite compositions in $\M$.
\end{proof}

%=====================================
\section{Smith Ideals for Operads}\label{sec:smith-operad}

Suppose $(\M, \tensor, \tensorunit)$ is a cocomplete symmetric monoidal category in which the monoidal product commutes with colimits on both sides, which is automatically true if $\M$ is a closed symmetric monoidal category.  In this section we define Smith ideals for an arbitrary colored operad $\O$ in $\M$.  When $\M$ is pointed, we observe in Theorem \ref{thm:cok-ker-operad} that the cokernel and the kernel induce an adjunction between the categories of Smith $\O$-ideals and of $\O$-algebra morphisms.  This will set the stage for the study of the homotopy theory of Smith $\O$-ideals in the next several sections.

%===========================================
\subsection{Operads, Algebras, and Bimodules}

The following material on profiles and colored symmetric sequences is from \cite{yau-johnson}.  For colored operads our references are \cite{yau-operad} and \cite{white-yau}.

\begin{definition}\label{def:profiles}
Suppose $\fC$ is a set, whose elements will be called \emph{colors}.
\begin{enumerate}
\item A \emph{$\fC$-profile} is a finite, possibly empty sequence $\uc = (c_1, \ldots, c_n)$ with each $c_i \in \fC$.  
\item When permutations act on $\fC$-profiles from the left (resp., right), the resulting groupoid is denoted by $\Sigmac$ (resp., $\Sigmacop$).
\item The category of \emph{$\fC$-colored symmetric sequences} in $\M$ is the diagram category $\M^{\Sigmacopc}$.  For a $\fC$-colored symmetric sequence $X$, we think of $\Sigmacop$ (resp., $\fC$) as parametrizing the inputs (resp., outputs).  For $(\uc; d) \in \Sigmacopc$, the corresponding entry of a $\fC$-colored symmetric sequence $X$ is denoted by $X\duc$.  
\item A \emph{$\fC$-colored operad} $(\O, \gamma, 1)$ in $\M$ consists of:
\begin{itemize}
\item a $\fC$-colored symmetric sequence $\O$ in $\M$;
\item a structure morphism $\gamma : \O \circ \O \to \O$, where $\circ$ is the circle product of $\O$ in \cite{white-yau} (Definition 3.2.3), explicitly:
\[\nicexy{\O\duc \otimes \bigtensor{i=1}{n} \O\ciubi \ar[r]^-{\gamma} & \O\dub}\]
in $\M$ for all $d \in \fC$, $\uc = (c_1,\ldots,c_n) \in \Sigmac$, and $\ub_i \in \Sigmac$ for $1 \leq i \leq n$, where $\ub = (\ub_1,\ldots,\ub_n)$ is the concatenation of the $\ub_i$'s; and
\item colored units $1_c : \tensorunit \to \O\ccsingle$ for $c \in \fC$.
\end{itemize}
These data are required to satisfy the associativity, unity, and equivariant conditions in \cite{yau-operad} (Definition 11.2.1).
\item For a $\fC$-colored operad $\O$ in $\M$, an \emph{$\O$-algebra} $(A, \lambda)$ consists of:
\begin{itemize}
\item objects $A_c \in \M$ for $c \in \fC$ and
\item structure morphisms $\O \circ A \to A$, explicitly:
\[\nicexy{\O\duc \otimes A_{c_1} \otimes \cdots \otimes A_{c_n} \ar[r]^-{\lambda} & A_d}\]
in $\M$ for all $d \in \fC$ and $\uc = (c_1,\ldots,c_n) \in \Sigmac$.
\end{itemize}
These data are required to satisfy the associativity, unity, and equivariant conditions in \cite{yau-operad} (Definition 13.2.3).  Morphisms of $\O$-algebras are required to preserve the structure morphisms as in \cite{yau-operad} (Definition 13.2.8).  The category of $\O$-algebras in $\M$ is denoted by $\algom$.  The forgetful functor is denoted by $U : \algom \to \M^{\fC}$.
\item Suppose $(A,\lambda)$ is an $\O$-algebra for some $\fC$-colored operad $\O$ in $\M$.  An \emph{$A$-bimodule} $(X,\theta)$ consists of:
\begin{itemize}
\item objects $X_c \in \M$ for $c \in \fC$ and
\item structure morphisms
\[\nicexy{\O\duc \otimes A_{c_1} \otimes \cdots \otimes A_{c_{i-1}} \otimes X_{c_i} \otimes A_{c_{i+1}} \otimes \cdots \otimes A_{c_n} \ar[r]^-{\theta} & X_d}\]
in $\M$ for all $1 \leq i \leq n$ with $n \geq 1$, $d \in \fC$, and $\uc = (c_1,\ldots,c_n) \in \Sigmac$.
\end{itemize}
These data are required to satisfy associativity, unity, and equivariant conditions similar to those of an $\O$-algebra but with one input entry $A$ and the output entry replaced by $X$.  A morphism of $A$-bimodules is required to preserve the structure morphisms.  %The category of $A$-bimodules is denoted by $\Bimod(A)$.
\item For a $\fC$-colored operad $\O$ in $\M$, we write
\begin{equation}\label{Oarrows}
\arrowotensor = L_0\O \andspace \arrowopp = L_1\O
\end{equation}
for the $\fC$-colored operads in $\arrowmtensor$ and $\arrowmpp$, respectively, where $L_0 : \M \to \arrowmtensor$ and $L_1 : \M \to \arrowmpp$ are the strict monoidal functors in \eqref{lev}. 
\end{enumerate}
\end{definition}

As a consequence of \eqref{Lcoker} and \eqref{Oarrows}, we have the following equalities.
\begin{equation}\label{LofO}
\begin{split}
\ker \arrowotensor &= \ker(L_0 \O) = L_1\O = \arrowopp\\
\coker \arrowopp &= \coker (L_1 \O) = L_0\O = \arrowotensor
\end{split}
\end{equation}

\begin{definition} \label{def:operadically-cof-gen}
Suppose, moreover, that $\M$ is a cofibrantly generated model category.  We say that $\M$ is \textit{operadically cofibrantly generated} if the domains and codomains of $I$ (resp., $J$) are small with respect to a class of morphisms containing $U(\O \circ I)$-cell (resp., $U(\O\circ J)$-cell) for each $\fC$ and each $\fC$-colored operad $\O$.  More explicitly, $\O \circ - : \M^\fC \to \algom$ is a left adjoint of the forgetful functor $U$ \cite{white-yau} (4.1.11).  To form $\O \circ I$ and $\O \circ J$, we first embed $\M$ into the $c$-colored entry of $\M^\fC$ for some $c \in \fC$, with $1_\emptyset$ in all other entries, and then apply $\O \circ - $ to the images of $I$ and $J$ in $\M^\fC$.  The condition for operadically cofibrantly generated is assumed to hold for each $c \in \fC$.
\end{definition}

\begin{example} \label{example:top-operadically-cof-gen}
Every strongly cofibrantly generated model category is operadically cofibrantly generated. The category of compactly generated topological spaces is \emph{not} strongly cofibrantly generated.  However, it is operadically cofibrantly generated.  Indeed, the domains and codomains of $I\cup J$ are small relative to inclusions \cite{hovey} (2.4.1), and the morphisms in $U(\O \circ I)$-cell and $U(\O \circ J)$-cell are inclusions \cite{white-yau2} (5.10).
\end{example}

\subsection{Arrow Category of Operadic Algebras}

\begin{definition}\label{algomar}
For each $\fC$-colored operad $\O$ in $\M$, the arrow category, in the sense of Definition \ref{def:arrowcat}, of the category $\algom$ is denoted by $\algomar$.
\end{definition}

Explicitly, an object in $\algomar$ is an $\O$-algebra morphism.  A morphism in $\algomar$ is a commutative square in $\algom$ as in \eqref{map-in-arrowcat}, with each arrow an $\O$-algebra morphism.

\begin{proposition}\label{arrowotensor-algebra}
Suppose $\O$ is a $\fC$-colored operad in $\M$.  Then $\algomtensor$ is canonically isomorphic to $\algomar$.
\end{proposition}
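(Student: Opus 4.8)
The plan is to exhibit explicit mutually inverse functors between $\algomtensor$ and the arrow category $\Arr\bigl(\algom\bigr)$, whose objects are maps of $\O$-algebras and whose morphisms are commutative squares of $\O$-algebra maps. The conceptual input is that the tensor product on $\arrowmtensor$ is entrywise: both $\Ev_0$ and $\Ev_1$ send $f \otimes g$ to $\Ev_i f \otimes \Ev_i g$, so in addition to $\Ev_0$ being strict symmetric monoidal (as recorded in \eqref{lev}), the functor $\Ev_1 \colon \arrowmtensor \to \M$ is strict symmetric monoidal as well. Moreover $\arrowotensor = L_0\O$ is the ``constant'' operad whose arity entries are the identities $\Id \colon \O\duc \to \O\duc$, so that $\Ev_0 \arrowotensor = \Ev_1 \arrowotensor = \O$ as $\fC$-colored operads in $\M$.

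First I would construct the functor $\Phi \colon \algomtensor \to \Arr\bigl(\algom\bigr)$. A strict symmetric monoidal functor carries algebras over an operad to algebras over the image operad, so applying $\Ev_0$ and $\Ev_1$ to an $\arrowotensor$-algebra $A = \{A_c \colon A_{c,0} \to A_{c,1}\}_{c \in \fC}$ produces two $\O$-algebras $\Ev_0 A$ and $\Ev_1 A$ with underlying objects $A_{c,0}$ and $A_{c,1}$. Because each structure map of $A$ is a morphism in $\arrowmtensor$, i.e. a commutative square, the connecting arrows $A_c \colon A_{c,0} \to A_{c,1}$ assemble into a map of $\O$-algebras $\Ev_0 A \to \Ev_1 A$; this is the object $\Phi(A)$, and $\Phi$ is defined on morphisms by $(\Ev_0, \Ev_1)$. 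Conversely I would construct $\Psi \colon \Arr\bigl(\algom\bigr) \to \algomtensor$: given a map of $\O$-algebras $\varphi \colon B \to C$, let the underlying $\fC$-colored object of $\arrowmtensor$ be $\{\varphi_c \colon B_c \to C_c\}_{c \in \fC}$, and define each structure map to be the square assembled from $\lambda^B$ and $\lambda^C$. This square commutes precisely because $\varphi$ is a map of $\O$-algebras, and since the tensor product is entrywise the associativity, unity, and equivariance axioms for this $\arrowotensor$-algebra hold in each of the two levels exactly because they hold for $B$ and for $C$.

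Finally I would check that $\Phi$ and $\Psi$ are mutually inverse, which is a direct unwinding of the definitions: $\Ev_0$ and $\Ev_1$ recover the domain and codomain $\O$-algebras and the connecting map of a square, while reassembling these via $\Psi$ returns the original commutative square, and symmetrically $\Phi\Psi$ and $\Psi\Phi$ are the respective identities on objects and morphisms. The only real work, and the step I expect to be the main obstacle, is the bookkeeping in the previous paragraph verifying that the single family of structure maps in $\arrowmtensor$ decomposes --- cleanly and with no cross-terms between the two levels --- into a pair of $\O$-algebra structures together with the naturality condition identifying the connecting maps as an $\O$-algebra map. This is exactly where the entrywise (as opposed to pushout-product) nature of $\arrowmtensor$ is essential, in contrast with $\arrowmpp$, where no such levelwise splitting is available.
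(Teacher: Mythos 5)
Your proposal is correct and takes essentially the same approach as the paper: both unravel an $\arrowotensor$-algebra structure map as a commutative square in $\M$ whose two rows define $\O$-algebra structures and whose commutativity says the connecting maps form a map of $\O$-algebras. Your packaging of the levelwise translation of the associativity, unity, and equivariance axioms via the strict symmetric monoidal functors $\Ev_0$ and $\Ev_1$ applied to $\arrowotensor = L_0\O$ is merely a tidier phrasing of the paper's direct verification.
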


\begin{proof}
An $\arrowotensor$-algebra $f = \{f_c : X_c \to Y_c\}$ consists of morphisms $f_c \in \M$ for $c \in \fC$ and structure morphisms
\[\nicexy{\arrowotensor\duc \otimes \bigtensor{i=1}{n} \, f_{c_i} \ar[r]^-{\lambda} & f_d}\]
in $\arrowmtensor$ for all $d \in \fC$ and $\uc = (c_1,\ldots,c_n) \in \Sigmac$.  This structure morphism is equivalent to the commutative square
\[\nicexy@R-.3cm{\O\duc \otimes \bigtensor{i=1}{n} X_{c_i} \ar[r]^-{\lambda_0} \ar[d]_-{\Id \otimes \bigotimes f_{c_i}} & X_d \ar[d]^-{f_d}\\
\O\duc \otimes \bigtensor{i=1}{n} Y_{c_i} \ar[r]^-{\lambda_1} & Y_d}\]
in $\M$.  The associativity, unity, and equivariance of $\lambda$ translate into those of $\lambda_0$ and $\lambda_1$, making $(X,\lambda_0)$ and $(Y,\lambda_1)$ into $\O$-algebras in $\M$.  The commutativity of the previous square means that $f : (X,\lambda_0) \to (Y,\lambda_1)$ is a morphism of $\O$-algebras.  The identification of morphisms in $\algomtensor$ and $\algomar$ is similarly.
\end{proof}

\begin{remark}
For the associative operad $\As$, whose algebras are monoids, the identification of $\arrowastensor$-algebras (that is, monoids in $\arrowmtensor$) with monoid morphisms in $\M$ is \cite{hovey-smith} (1.5).
\end{remark}

\subsection{Operadic Smith Ideals}

\begin{definition}\label{def:smith-ideals}
Suppose $\O$ is a $\fC$-colored operad in $\M$.  The category of \emph{Smith $\O$-ideals} in $\M$ is defined as the category $\algompp$.
\end{definition}

Propositions \ref{smith-unravel} and \ref{smith-morphism} below unpack Definition \ref{def:smith-ideals}.  They should be compared with Proposition \ref{arrowotensor-algebra}.  For objects or morphisms $A_{c_s}, \ldots, A_{c_t}$ with $s \leq t$, we use the abbreviation
\begin{equation}\label{Ast}
A_{c_{s,t}} = \bigotimes_{k=s}^t A_{c_k}.
\end{equation}

\begin{proposition}\label{smith-unravel}
Suppose $\O$ is a $\fC$-colored operad in $\M$.  A Smith $\O$-ideal in $\M$ consists of precisely
\begin{itemize}
\item an $\O$-algebra $(A,\lambda_1)$ in $\M$,
\item an $A$-bimodule $(X,\lambda_0)$ in $\M$, and
\item an $A$-bimodule morphism $f : (X,\lambda_0) \to (A,\lambda_1)$
\end{itemize}
such that, for $1 \leq i < j \leq n$, the diagram
\begin{equation}\label{two-x}
\scalebox{.9}{$
\nicexy{\O\duc \otimes A_{c_{1,i-1}} \otimes X_{c_i} \otimes A_{c_{i+1,j-1}} \otimes X_{c_j} \otimes A_{c_{j+1,n}} \ar[d]_-{\Id \otimes f_{c_i} \otimes \Id} \ar[r]^-{\Id \otimes f_{c_j} \otimes \Id} & \O\duc \otimes A_{c_{1,i-1}} \otimes X_{c_i} \otimes A_{c_{i+1,n}} \ar[d] \ar[d]^-{\lambda_0^i}\\
\O\duc \otimes A_{c_{1,j-1}} \otimes X_{c_j} \otimes A_{c_{j+1,n}} \ar[r]^-{\lambda_0^j} & X_d}$}
\end{equation}
in $\M$ is commutative.
\end{proposition}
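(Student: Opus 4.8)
The plan is to unravel the definition of an $\arrowopp$-algebra directly, using the explicit form of the pushout product against the free objects produced by $L_1$. An $\arrowopp$-algebra is a family of objects $f_c = (f_c : X_c \to Y_c)$ of $\arrowmpp$, one for each color $c \in \fC$, together with structure maps
\[\nicexy{\arrowopp\duc \boxprod f_{c_1} \boxprod \cdots \boxprod f_{c_n} \ar[r] & f_d}\]
in $\arrowmpp$ satisfying the operad-algebra axioms. First I would compute the source of this structure map. Since $\arrowopp\duc = L_1(\O\duc) = (\varnothing \to \O\duc)$ and the monoidal product of $\M$ commutes with colimits, the identity $\varnothing \tensor (-) = \varnothing$ gives, for any map $h$ of $\M$, the formula $(\varnothing \to A)\boxprod h = \Id_A \tensor h$. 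Iterating this, the structure map is identified with the arrow $\O\duc \tensor \bigl(f_{c_1}\boxprod\cdots\boxprod f_{c_n}\bigr)$, whose source is $\O\duc \tensor Q$, where $Q$ is the colimit of the punctured $n$-cube whose vertex indexed by a proper subset $S \subsetneq \{1,\dots,n\}$ is $\bigotimes_i W_i^S$ with $W_i^S = Y_{c_i}$ for $i \in S$ and $W_i^S = X_{c_i}$ for $i \notin S$, and whose target is $\O\duc \tensor Y_{c_1}\cdots Y_{c_n}$. A structure map in $\arrowmpp$ is then a commutative square with these two arrows as its vertical edges.

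Next I would read off the data from this square. The bottom edge is a map $\O\duc \tensor Y_{c_1}\cdots Y_{c_n} \to Y_d$, and, just as for the codomain in Proposition \ref{arrowotensor-algebra}, the restriction of the operad-algebra axioms to these lower edges makes $(Y,\lambda_1)$ into an $\O$-algebra. The top edge is a map $\O\duc \tensor Q \to X_d$, and describing such maps is the key step. Because $Q$ is a colimit over the poset of proper subsets of $\{1,\dots,n\}$, a map out of $\O\duc \tensor Q$ is precisely a cocone, i.e. a compatible family of maps out of the vertices. Every proper subset is contained in some codimension-one subset $[n]\setminus\{j\}$, so a cocone is determined by its codimension-one components $\O\duc \tensor Y_{c_1}\cdots X_{c_j}\cdots Y_{c_n} \to X_d$; these are exactly the $Y$-bimodule structure maps $\lambda_0$ of $X$, and the commutativity of the structure square records that $f_d \circ \lambda_0 = \lambda_1 \circ (\Id \tensor f_{c_j})$, i.e. that $f$ is a map of $Y$-bimodules.

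The heart of the argument is the compatibility required for codimension-one components to assemble into a genuine cocone. A subset $S$ lies below two distinct codimension-one subsets $[n]\setminus\{j\}$ and $[n]\setminus\{k\}$ exactly when $j,k \notin S$, the extremal such $S$ being the codimension-two vertex $[n]\setminus\{j,k\}$. Agreement of the two induced maps on that vertex is precisely the commuting diagram \eqref{two-x}, and since any lower overlap factors through a codimension-two vertex, agreement there forces agreement on all overlaps; hence no constraints beyond \eqref{two-x} arise (and for $n \leq 1$ there are none, with $Q = \varnothing$ when $n=0$). I would then verify that the operad associativity, unity, and equivariance axioms for the structure maps translate, along the bottom edge, into those of the $\O$-algebra $(Y,\lambda_1)$ and, along the top edge, into those of the $Y$-bimodule $(X,\lambda_0)$.

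Conversely, given an $\O$-algebra $(Y,\lambda_1)$, a $Y$-bimodule $(X,\lambda_0)$, a bimodule map $f$, and the relations \eqref{two-x}, the bimodule structure maps serve as codimension-one components that glue over $Q$ by the same colimit analysis, and the resulting squares reassemble into an $\arrowopp$-algebra; the two constructions are mutually inverse. I expect the main obstacle to be the colimit bookkeeping in the middle step, namely checking cleanly that a compatible family on the codimension-one vertices extends uniquely over the whole punctured cube and that \eqref{two-x} is the only genuinely new constraint, rather than the translation of the algebra and bimodule axioms, which is routine.
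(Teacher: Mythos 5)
Your proposal is correct and takes essentially the same route as the paper's proof: identify the structure maps via $L_1$ and the pushout product, describe the domain of the iterated pushout product as the colimit of the punctured $n$-cube, read off the $\O$-algebra structure on $Y$, the $Y$-bimodule structure on $X$ from the codimension-one vertices, and the bimodule map condition from the square, with \eqref{two-x} as the codimension-two compatibility. Your observation that agreement on overlaps reduces to the codimension-two vertices is precisely the paper's citation that the full subcategory of the punctured cube on vertices with at most two $X$-entries is final (Mac Lane, IX.3), so the remaining bookkeeping you flag is exactly what that finality statement supplies.
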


\begin{proof}
An $\arrowopp$-algebra $(f,\lambda)$ in $\arrowmpp$ consists of
\begin{itemize}
\item morphisms $f_c : X_c \to A_c$ in $\M$ for $c \in \fC$ and
\item structure morphisms
\[\nicexy{\arrowopp\duc \boxprod f_{c_1} \boxprod \cdots \boxprod f_{c_n} \ar[r]^-{\lambda} & f_d}\]
in $\arrowmpp$ for all $d \in \fC$ and $\uc = (c_1,\ldots,c_n) \in \Sigmac$
\end{itemize}
that are associative, unital, and equivariant.  Since $\arrowopp\duc$ is the morphism $\varnothing \to \O\duc$, when $n=0$, the structure morphism $\lambda$ is simply the morphism $\lambda_1 : \O\dnothing \to A_d$ in $\M$ for $d \in \fC$.  For $n\geq 1$, the structure morphism $\lambda$ is equivalent to the commutative diagram
\begin{equation}\label{smith-structure-map}
\begin{tikzcd}
\O\duc \otimes \dom(f_{c_1} \boxprod \cdots \boxprod f_{c_n}) \ar{d}[swap]{\Id \otimes f_*} \ar{r}{\lambda_0} & X_d \ar{d}{f_d}\\
\O\duc \otimes A_{c_1} \otimes \cdots \otimes A_{c_n} \ar{r}{\lambda_1} & A_d
\end{tikzcd}
\end{equation}
in $\M$, where $f_*$ is induced by the morphisms $f_c$'s.  The bottom horizontal morphism $\lambda_1$ in \eqref{smith-structure-map} together with the morphisms $\lambda_1 : \O\dnothing \to A_d$ for $d \in \fC$ give $A$ the structure of an $\O$-algebra.  

The domain of the iterated pushout product $f_{c_1} \boxprod \cdots \boxprod f_{c_n}$ is the colimit
\begin{equation}\label{colim-punctured}
\dom(f_{c_1} \boxprod \cdots \boxprod f_{c_n}) = 
\colimover{(\epsilon_1,\ldots,\epsilon_n)}\, f_{\epsilon_1} \otimes \cdots \otimes f_{\epsilon_n}
\end{equation}
in which $(\epsilon_1,\ldots,\epsilon_n) \in \{0,1\}^n \setminus \{(1,\ldots,1)\}$ and $f_{\epsilon_i} = X_{c_i}$ (resp., $A_{c_i}$) if $\epsilon_i = 0$ (resp., $\epsilon_i = 1$).  The morphisms that define the colimit are given by the $f_{c_i}$'s.  For each $n$-tuple of indices $\epsilon = (\epsilon_1,\ldots,\epsilon_n) \in \{0,1\}^n \setminus \{(1,\ldots,1)\}$, we denote by
\begin{equation}\label{iota-epz}
\begin{tikzcd}
f_{\epsilon_1} \otimes \cdots \otimes f_{\epsilon_n} \ar{r}{\iota_\epsilon} 
& \dom(f_{c_1} \boxprod \cdots \boxprod f_{c_n})
\end{tikzcd}
\end{equation}
the morphism that comes with the colimit.  For each $i \in \{1,\ldots,n\}$, we denote by 
\[\epsilon^i = (1,\ldots,0,\ldots,1) \in \{0,1\}^n\]
the $n$-tuple with 0 in the $i$th entry and 1 in every other entry.

The top horizontal morphism $\lambda_0$ in \eqref{smith-structure-map} pre-composed with $\Id \otimes\, \iota_{\epsilon^i}$, as in
\begin{equation}\label{lambda-zero-epz}
\begin{tikzcd}[column sep=normal]
\O\duc \otimes A_{c_{1,i-1}} \otimes X_{c_i} \otimes A_{c_{i+1,n}} \ar{d}[swap]{\Id \otimes\, \iota_{\epsilon^i}} \ar[bend left=15]{dr}[pos=.6]{\lambda_0^{\epsilon^i}} &\\
\O\duc \otimes \dom(f_{c_1} \boxprod \cdots \boxprod f_{c_n}) \ar{r}[pos=.4]{\lambda_0} & X_d
\end{tikzcd}
\end{equation}
for $1 \leq i \leq n$, gives $X$ the structure of an $A$-bimodule.  The commutative diagram \eqref{smith-structure-map}, pre-composed with $\Id \otimes\, \iota_{\epsilon^i}$ as in \eqref{lambda-zero-epz}, implies that $f : (X,\lambda_0) \to (A,\lambda_1)$ is an $A$-bimodule morphism.  The morphism $\lambda_0^i$ in \eqref{two-x} is $\lambda_0^{\epsilon^i}$ in \eqref{lambda-zero-epz}.

The diagram \eqref{two-x} is the boundary of the following diagram, where $D = \dom(f_{c_1} \boxprod \cdots \boxprod f_{c_n})$.  
\begin{equation}\label{twox}
\begin{tikzcd}[column sep=0ex,cells={nodes={scale=.85}},
every label/.append style={scale=.85}]
\O\duc \otimes A_{c_{1,i-1}} \otimes X_{c_i} \otimes A_{c_{i+1,j-1}} \otimes X_{c_j} \otimes A_{c_{j+1,n}} \ar{dd}[swap]{\Id \otimes f_{c_i} \otimes \Id} \ar{rr}{\Id \otimes f_{c_j} \otimes \Id} && \O\duc \otimes A_{c_{1,i-1}} \otimes X_{c_i} \otimes A_{c_{i+1,n}} \ar{dd}{\lambda_0^{\epsilon^i}} \ar{dl}[swap,pos=.7]{\Id \otimes\, \iota_{\epsilon^i}}\\
& \O\duc \otimes D \ar{dr}{\lambda_0} &\\
\O\duc \otimes A_{c_{1,j-1}} \otimes X_{c_j} \otimes A_{c_{j+1,n}} \ar{ur}{\Id \otimes\, \iota_{\epsilon^j}} \ar{rr}{\lambda_0^{\epsilon^j}} && X_d
\end{tikzcd}
\end{equation}
The upper left quadrilateral is commutative because $D$ is the colimit in \eqref{colim-punctured}.  The other two triangles are commutative by the definition of $\lambda_0^{\epsilon^i}$ and $\lambda_0^{\epsilon^j}$ in \eqref{lambda-zero-epz}.

The argument above can be reversed.  In particular, to see that the commutative diagram \eqref{two-x}, which is the boundary of \eqref{twox}, yields the top horizontal morphism $\lambda_0$ in \eqref{smith-structure-map}, observe that the full subcategory of the punctured $n$-cube $\{0,1\}^n \setminus \{(1,\ldots,1)\}$ consisting of $(\epsilon_1,\ldots,\epsilon_n)$ with at most two $0$'s is a final subcategory \cite{maclane} (IX.3).  Thus the diagram \eqref{twox} ensures that $\lambda_0$ exists.
\end{proof}

\begin{remark}
The special case of Proposition \ref{smith-unravel} for $\O = \As$ is \cite{hovey} (1.7).
\end{remark}

\begin{proposition}\label{smith-morphism}
In the context of Proposition \ref{smith-unravel}, a morphism of Smith $\O$-ideals 
\[\Bigl(\nicexy@C-1em{(X,\lambda_0) \ar[r]^-{f} & (A,\lambda_1)}\Bigr) \overset{h}{\to} 
\Bigl(\nicexy@C-1em{(X',\lambda_0') \ar[r]^-{f'} & (A',\lambda_1')}\Bigr)\]
consists of precisely
\begin{itemize}
\item a morphism $h^1 : A \to A'$ of $\O$-algebras and
\item a morphism $h^0 : X \to X'$ of $A$-bimodules, where $X'$ becomes an $A$-bimodule via the restriction along $h^1$,
\end{itemize}
such that the square
\begin{equation}\label{fh}
\begin{tikzcd}
X_c \ar{d}[swap]{f_c} \ar{r}{h^0_c} & X'_c \ar{d}{f'_c}\\
A_c \ar{r}{h^1_c} & A'_c
\end{tikzcd}
\end{equation}
is commutative for each $c \in \fC$.
\end{proposition}

\begin{proof}
Following the proof of Proposition \ref{smith-unravel}, we unravel the given morphism $h : (f,\lambda) \to (f',\lambda')$ of $\arrowopp$-algebras.  The underlying datum of $h$ is a morphism $f \to f'$ in $\arrowm^\fC$.  Thus $h$ consists of, for each $c \in \fC$, morphisms 
\begin{equation}\label{hzerohone}
h^0_c : X_c \to X'_c \andspace h^1_c : A_c \to A'_c
\end{equation}
in $\M$ such that the square \eqref{fh} commutes.  

The compatibility of $h$ with the $\arrowopp$-algebra structure means the following diagram commutes in $\arrowm$ for all $d, c_1, \ldots, c_n \in \fC$.
\begin{equation}\label{smith-arrow-diag}
\begin{tikzcd}[column sep=large]
\arrowopp\duc \boxprod f_{c_1} \boxprod \cdots \boxprod f_{c_n} 
\ar{r}{\lambda} \ar{d}[swap]{\Id \boxprod\, h_{c_1} \boxprod\, \cdots\, \boxprod\, h_{c_n}} &
f_d \ar{d}{h_d} \\
\arrowopp\duc \boxprod f'_{c_1} \boxprod \cdots \boxprod f'_{c_n} \ar{r}{\lambda'} & f'_d 
\end{tikzcd}
\end{equation}
If $n=0$, then \eqref{smith-arrow-diag} is the commutative diagram below.
\begin{equation}\label{smith-arrow-zero}
\begin{tikzcd}[column sep=large]
\O\dnothing \ar{r}{\lambda_1} \ar[equal]{d} & A_d \ar{d}{h^1_d}\\
\O\dnothing \ar{r}{\lambda_1'}& A_d'
\end{tikzcd}
\end{equation}

For $n \geq 1$, using the abbreviation
\[D = \dom\big(f_{c_1} \boxprod \cdots \boxprod f_{c_n}\big) \andspace 
D' = \dom\big(f'_{c_1} \boxprod \cdots \boxprod f'_{c_n}\big),\] 
the diagram \eqref{smith-arrow-diag} becomes the following commutative cube.
\begin{equation}\label{smith-arrow-cube}
\begin{tikzcd}
\O\duc \otimes D \ar{dd}[swap]{\Id \otimes f_*} \ar{dr}[swap,pos=.7]{\Id \otimes\, h_*} \ar{rr}{\lambda_0} && X_d \ar{dd}[pos=.75]{f_d} \ar{dr}{h^0_d} &\\
& \O\duc \otimes D' \ar[crossing over]{rr}[pos=.25]{\lambda'_0} && X'_d \ar{dd}{f'_d}\\
\O\duc \otimes A_{c_{1,n}} \ar{dr}[swap]{\Id \otimes\, h^1_*} \ar{rr}[pos=.25]{\lambda_1} && A_d \ar{dr}[pos=.3]{h^1_d}\\
& \O\duc \otimes A'_{c_{1,n}} \ar[from=uu,crossing over, "\Id \otimes f'_*", pos=.75] \ar{rr}[pos=.4]{\lambda'_1} && A'_d
\end{tikzcd}
\end{equation}
The six commutative faces of \eqref{smith-arrow-cube} are as follows.
\begin{enumerate}
\item The back face is \eqref{smith-structure-map} for $(f,\lambda)$, expressing the $\arrowopp$-algebra structure $\lambda$ on $f$.
\item The front face is \eqref{smith-structure-map} for $(f',\lambda')$, expressing the $\arrowopp$-algebra structure $\lambda'$ on $f'$.
\item The right face is the square \eqref{fh} for $d \in \fC$.
\item The bottom face and the $n=0$ case \eqref{smith-arrow-zero} together express the fact that $h^1 : (A,\lambda_1) \to (A',\lambda'_1)$ is an $\O$-algebra morphism.
\item The left face imposes no extra condition because $D$ is the colimit in \eqref{colim-punctured} and similarly for $D'$.  In more detail, for each $n$-tuple $(\epsilon_1,\ldots,\epsilon_n) \in \{0,1\}^n \setminus \{(1,\ldots,1)\}$, the square
\begin{equation}\label{fhstar}
\begin{tikzcd}[column sep=large]
f_{\epsilon_1} \otimes \cdots \otimes f_{\epsilon_n} \ar{d}[swap]{f_*} \ar{r}{h_*} & 
f'_{\epsilon_1} \otimes \cdots \otimes f'_{\epsilon_n} \ar{d}{f'_*}\\
A_{c_1} \otimes \cdots \otimes A_{c_n} \ar{r}{h^1_*} &
A'_{c_1} \otimes \cdots \otimes A'_{c_n}
\end{tikzcd}
\end{equation}
is commutative because it is a tensor product of $n$ commutative squares corresponding to the $n$ tensor factors of the upper left corner.
\begin{itemize}
\item For a tensor factor with $\epsilon_i = 0$, by definition $f_{\epsilon_i} = X_{c_i}$ and $f'_{\epsilon_i} = X'_{c_i}$.  In this case, we have the commutative square \eqref{fh} for $c_i \in \fC$.
\item For a tensor factor with $\epsilon_i = 1$, by definition $f_{\epsilon_i} = A_{c_i}$ and $f'_{\epsilon_i} = A'_{c_i}$.  Both $f_*$ and $f'_*$ are given by the identity in the respective tensor factors, while both $h_*$ and $h^1_*$ are given by $h^1_{c_i}$.  
\end{itemize}
\end{enumerate}
Pre-composing the top face of the commutative cube \eqref{smith-arrow-cube} with the morphism $\Id \otimes\, \iota_{\epsilon^i}$ in \eqref{lambda-zero-epz} yields the following commutative diagram.
\begin{equation}\label{lambda-hzero}
\begin{tikzcd}[column sep=large]
\O\duc \otimes A_{c_{1,i-1}} \otimes X_{c_i} \otimes A_{c_{i+1,n}} 
\ar{r}{\lambda_0^{\epsilon^i}} \ar{d}[swap]{\Id \otimes\, h^0_{c_i} \otimes\, \Id} 
& X_d \ar{dd}{h^0_d}\\
\O\duc \otimes A_{c_{1,i-1}} \otimes X'_{c_i} \otimes A_{c_{i+1,n}} 
\ar{d}[swap]{\Id \otimes\, h^1_{c_{1,i-1}} \otimes\, \Id \otimes\, h^1_{c_{i+1,n}}} &\\
\O\duc \otimes A'_{c_{1,i-1}} \otimes X'_{c_i} \otimes A'_{c_{i+1,n}} 
\ar{r}{(\lambda'_0)^{\epsilon^i}} & X'_d
\end{tikzcd}
\end{equation}
This commutative diagram expresses the fact that $h^0 : X \to X'$ is a morphism of $A$-bimodules, where $X'$ becomes an $A$-bimodule via the restriction along $h^1$.

Finally, we observe that the top face of the cube \eqref{smith-arrow-cube} is actually equivalent to the commutative diagram \eqref{lambda-hzero}.  To see this, consider an $n$-tuple $\epsilon = (\epsilon_1,\ldots,\epsilon_n) \in \{0,1\}^n$ with at least two entries equal to 0.  Then the morphism $\iota_\epsilon$ in \eqref{iota-epz} factors as follows for each index $i \in \{1,\ldots,n\}$ with $\epsilon_i = 0$, and similarly for $f'$.
\[\begin{tikzcd}
f_{\epsilon_1} \otimes \cdots \otimes f_{\epsilon_n} \ar{d}[swap]{f_*} \ar{r}{\iota_\epsilon} 
& D\\
A_{c_{1,i-1}} \otimes X_{c_i} \otimes A_{c_{i+1,n}} \ar{ur}[swap]{\iota_{\epsilon^i}}
\end{tikzcd}\]
Thus, pre-composing the top face of \eqref{smith-arrow-cube} with the morphism $\Id \otimes\, \iota_{\epsilon}$ yields a diagram that factors into two sub-diagrams, one of which is \eqref{lambda-hzero}.  The other sub-diagram commutes and imposes no extra condition by the same argument above for \eqref{fhstar}.
\end{proof}

The description of Smith $\O$-ideals and their morphisms in Propositions \ref{smith-unravel} and \ref{smith-morphism} imply the following result.

\begin{proposition}\label{smitho-m}
Suppose $\O$ is a $\fC$-colored operad in $\M$.  Then there exists a $(\fC \sqcup \fC)$-colored operad $\O^s$ in $\M$ such that there is a canonical isomorphism of categories
\[\algompp \cong \algosm.\]
\end{proposition}

\begin{proof}
Denote the first and the second copies of $\fC$ in $\fC \sqcup \fC$ by, respectively, $\fC^0$ and $\fC^1$.  For an element $c \in \fC$, we write $c^\epsilon \in \fC^\epsilon$ for the same element for $\epsilon \in \{0,1\}$.  The entries of $\O^s$ are defined as follows for $d,c_1,\ldots,c_n \in \fC$ and $\epsilon_1,\ldots,\epsilon_n \in \{0,1\}$.
\[\begin{split}
\O^s\smallprof{$\binom{d^1}{c_1^{\epsilon_1}, \ldots, c_n^{\epsilon_n}}$} &= \O\duc\\
\O^s\smallprof{$\binom{d^0}{c_1^{\epsilon_1}, \ldots, c_n^{\epsilon_n}}$} &= 
\begin{cases}
\O\duc & \text{if at least one $\epsilon_i = 0$ and}\\
\emptyset & \text{otherwise.}
\end{cases}
\end{split}\]
The operad structure morphisms of $\O^s$ are either those of $\O$ or the unique morphism from the initial object $\emptyset$.  

An $\O^s$-algebra in $\M$ consists of, first of all, a $(\fC^0 \sqcup \fC^1)$-colored object in $\M$, that is, a $\fC^0$-colored object $X = \{X_c\}_{c\in\fC^0}$ and a $\fC^1$-colored object $A = \{A_c\}_{c\in\fC^1}$. 
\begin{itemize}
\item The $\O^s$-algebra structure morphism
\begin{equation}\label{lambda-one}
\nicexy{\O^s\smallprof{$\binom{d^1}{c_1^1, \ldots, c_n^1}$} \otimes A_{c_1} \otimes \cdots \otimes A_{c_n} \ar[r]^-{\lambda} & A_d}
\end{equation}
corresponds to the $\O$-algebra structure morphism $\lambda_1$ on $A$ in \eqref{smith-structure-map}. 
\item The $\O^s$-algebra structure morphism
\begin{equation}\label{lambda-zero}
\nicexy{\O^s\smallprof{$\binom{d^0}{c_1^1, \ldots, c_{i-1}^1, c_i^0, c_{i+1}^1, \ldots, c_n^1}$} \otimes A_{c_{1,i-1}} \otimes X_{c_i} \otimes A_{c_{i+1,n}} \ar[r]^-{\lambda} & X_d}
\end{equation}
corresponds to the $A$-bimodule structure morphism $\lambda_0^{\epsilon^i}$ on $X$ in \eqref{lambda-zero-epz}. 
\item The composite
\begin{equation}\label{fatd}
\nicexy@C+1em@R-1em{X_d \ar[d]_{\cong} & A_d\\
\tensorunit \otimes X_d \ar[r]^-{1_d \otimes \Id} & \O\ddsingle \otimes X_d = \O^s\smallprof{$\binom{d^1}{d^0}$} \otimes X_d \ar[u]_-{\lambda}}
\end{equation}
corresponds to the morphism $f_d$ in \eqref{smith-structure-map}.
\end{itemize} 

The identification of $\O^s$-algebra morphisms and Smith $\O$-ideal morphisms follows similarly from Proposition \ref{smith-morphism}.  More explicitly, a morphism $h$ of $\O^s$-algebras consists of a $(\fC^0 \sqcup \fC^1)$-colored morphism in $\M$.  So $h$ consists of component morphisms $h^0_c : X_c \to X'_c$ and $h^1_c : A_c \to A'_c$ as in \eqref{hzerohone}.  To see that these component morphisms make the diagram \eqref{fh} commute, we use the fact that the components of $f$ are the composites in \eqref{fatd} and similarly for $f'$.  The desired diagram \eqref{fh} is the boundary of the following diagram.
\[\begin{tikzcd}[column sep=large]
X_c \ar{d}[swap]{h^0_c} \ar{r}{\cong} & \tensorunit \otimes X_c \ar{d}[swap]{\Id \otimes\, h^0_c} \ar{r}{1_c \otimes\, \Id} & \O^s\smallprof{$\binom{c^1}{c^0}$} \otimes X_c \ar{d}[swap]{\Id \otimes\, h^0_c} \ar{r}{\lambda} & A_c \ar{d}{h^1_c}\\
X'_c \ar{r}{\cong} & \tensorunit \otimes X'_c \ar{r}{1_c \otimes\, \Id} & \O^s\smallprof{$\binom{c^1}{c^0}$} \otimes X'_c \ar{r}{\lambda'} & A_c' 
\end{tikzcd}\]
\begin{itemize}
\item The left square commutes by the naturality of the left unit isomorphism in the monoidal category $\M$.
\item The middle square commutes by the functoriality of $\otimes$.
\item The right square commutes because $h$ respects $\O^s$-algebra structures.
\end{itemize}
This shows that the diagram \eqref{fh} is commutative.

The other two conditions in Proposition \ref{smith-morphism} are the following: (i) $h^1 : A \to A'$ is an $\O$-algebra morphism.  (ii) $h^0 : X \to X'$ is an $A$-bimodule morphism.
\begin{itemize}
\item Condition (i) consists of the $n=0$ case \eqref{smith-arrow-zero} and the bottom face of the cube \eqref{smith-arrow-cube}.  These are obtained from the compatibility of $h$ with the $\O^s$-algebra structure morphism \eqref{lambda-one}.
\item Condition (ii) is the diagram \eqref{lambda-hzero}.  This is obtained from the compatibility of $h$ with the $\O^s$-algebra structure morphism \eqref{lambda-zero}.
\end{itemize}
This finishes the proof.
\end{proof}

The colored operad $\O^s$ is somewhat similar to the two-colored operad for monoid morphisms in \cite{yau-operad} (Section 14.3).

%+========================================
\subsection{Operadic Smith Ideals and Morphisms of Operadic Algebras}

In Proposition \ref{coker-ker-pair} we observe that, if $\M$ is a pointed symmetric monoidal category with all small limits and colimits, then there is an adjunction
\begin{equation}\label{cok-ker}
\nicexy{\arrowmpp \ar@<3pt>[r]^-{\coker} & \arrowmtensor \ar@<1pt>[l]^-{\ker}}
\end{equation}
with cokernel as the left adjoint and kernel as the right adjoint.  Since cokernel is a strictly unital strong symmetric monoidal functor, the kernel is a strictly unital lax symmetric monoidal functor, and the adjunction is monoidal.  If $\M$ is a pointed model category, then $(\coker,\ker)$ is a Quillen adjunction.  If $\M$ is a stable model category, then $(\coker,\ker)$ is a Quillen equivalence.

\begin{theorem}\label{thm:cok-ker-operad}
Suppose $\M$ is a complete and cocomplete symmetric monoidal pointed category in which the monoidal product commutes with colimits on both sides.  Suppose $\O$ is a $\fC$-colored operad in $\M$.  Then the adjunction \eqref{cok-ker} induces an adjunction
\begin{equation}\label{cok-ker-oalg}
\nicexy{\algompp \ar@<3pt>[r]^-{\coker} & \algomtensor \ar@<1pt>[l]^-{\ker}}
\end{equation}
in which the left adjoint, the right adjoint, the unit, and the counit are defined entrywise.
\end{theorem}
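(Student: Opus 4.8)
The plan is to lift the arrow–category adjunction \eqref{cok-ker} to the level of operadic algebras, using that $\coker$ is strong symmetric monoidal and unit-preserving while $\ker$ is lax symmetric monoidal. The general mechanism is standard: a monoidal adjunction whose left adjoint is strong symmetric monoidal transports operads along the left adjoint and thereby induces an adjunction between the corresponding categories of operad algebras, the left adjoint acting by strong–monoidal transport of the algebra structure and the right adjoint acting by lax–monoidal transport together with the adjunction unit. To apply this here I must (i) identify the operad $\coker(\arrowopp)$ in $\arrowmtensor$ with $\arrowotensor$, (ii) use this to define the induced functors on algebras, and (iii) verify that the unit and counit of \eqref{cok-ker} respect the algebra structures, so that the adjunction descends.

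For step (i) I would compute $\coker$ and $\ker$ on the image of the level functors. Since $\M$ is pointed, its initial object $\varnothing$ is a zero object, so the cokernel of $\varnothing \to X$ is $X$ with quotient map the identity; that is, $\coker \circ L_1 = L_0$, and dually $\ker \circ L_0 = L_1$. Because $\coker$ is strong symmetric monoidal and $L_0,L_1$ are strict monoidal, applying $\coker$ to the $\fC$-colored operad $\arrowopp = L_1\O$ entrywise and on structure maps yields exactly the operad $L_0\O = \arrowotensor$; one checks that the monoidal-coherence data of $\coker$ restricted to the image of $L_1$ is trivial, so that the identification is one of operads and not merely of underlying symmetric sequences. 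Granting this, $\coker$ transports any $\arrowopp$-algebra structure to an $\arrowotensor$-algebra structure on its entrywise cokernel --- concretely, the quotient of the $\O$-algebra $Y$ by the ideal $X$, giving the $\O$-algebra map $Y \to Y/X$ --- while $\ker$ transports any $\arrowotensor$-algebra structure to an $\arrowopp$-algebra structure on its entrywise kernel by composing its lax structure maps with the adjunction unit $\arrowopp \to \ker\coker\,\arrowopp = \ker\arrowotensor$ --- concretely, the kernel ideal of an $\O$-algebra map. By construction the forgetful functors to $\arrowmpp$ and $\arrowmtensor$ commute with the two induced functors, which is precisely the assertion that both adjoints are computed entrywise.

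For step (iii), the unit $\eta$ and counit $\epsilon$ of \eqref{cok-ker}, evaluated on an $\arrowopp$-algebra and an $\arrowotensor$-algebra respectively, are maps of algebras: this is a diagram chase combining the naturality of $\eta,\epsilon$ with the compatibility of the monoidal structure maps of $\coker$ and $\ker$ and the operad identification of step (i). Once $\eta$ and $\epsilon$ are known to lift, the triangle identities are inherited from \eqref{cok-ker}, giving the desired adjunction \eqref{cok-ker-oalg}. I expect the genuine content --- and the main obstacle --- to be step (i) together with the well-definedness in step (ii): namely, pinning down that $\coker$ carries the operad $\arrowopp$ precisely onto $\arrowotensor$, and that the lax structure of $\ker$ together with the unit really produce an $\arrowopp$-algebra (not merely a $\ker\arrowotensor$-algebra) structure on kernels. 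Once these structure-map compatibilities are settled the adjunction itself is formal. As an alternative, purely hands-on route, one can instead use the explicit descriptions in Prop. \ref{smith-unravel} and Prop. \ref{arrowotensor-algebra} to check directly that the entrywise cokernel of a Smith $\O$-ideal is an $\O$-algebra map and that the entrywise kernel of an $\O$-algebra map is a Smith $\O$-ideal, and then that the adjunction bijection underlying \eqref{cok-ker} restricts to algebra maps on both sides.
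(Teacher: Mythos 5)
Your proposal is correct, but it takes a genuinely different route from the paper. Both arguments rest on the same two identifications --- $\coker\arrowopp = \arrowotensor$ (from $\coker \circ L_1 = L_0$, using that $\coker$ is unit-preserving strong symmetric monoidal) and $\ker\arrowotensor = \arrowopp$ (from $\ker \circ L_0 = L_1$, using that $\ker$ is lax symmetric monoidal) --- and both use the lax structure of $\ker$ to define the entrywise right adjoint $\ker : \algomtensor \to \algompp$. Where you diverge is in producing the left adjoint: you build the whole adjunction by hand via doctrinal transport, checking that the unit and counit of \eqref{cok-ker} are maps of algebras and then inheriting the triangle identities through the faithful forgetful functors (or, in your alternative route, restricting the adjunction bijection directly using Prop.~\ref{arrowotensor-algebra} and Prop.~\ref{smith-unravel}). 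The paper instead first invokes the Adjoint Lifting Theorem: it uses cocompleteness of $\algomtensor$ (via \cite{white-yau} (4.2.1), which is where the hypothesis that $\otimes$ commutes with colimits enters) and monadicity of both forgetful functors (\cite{white-yau} (4.1.11)) to get an abstract left adjoint of the lifted $\ker$, and only afterwards uses strong monoidality of $\coker$ and $\coker\arrowopp = \arrowotensor$ to identify that abstract left adjoint with the entrywise cokernel --- explicitly noting, as you implicitly do, that lifted left adjoints are \emph{not} entrywise in general. Your route is more elementary and self-contained: it needs no monadicity, no cocompleteness of the algebra category, and no appeal to \cite{borceux}, at the price of the unit/counit diagram chases you describe in step (iii); the paper's route outsources existence to general category theory and concentrates the monoidal bookkeeping in the single identification of the left adjoint. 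You also correctly isolate the two genuine pressure points --- that $\coker$ carries $\arrowopp$ to $\arrowotensor$ as an operad, not merely as a symmetric sequence, and that the lax structure of $\ker$ plus the unit yields an honest $\arrowopp$-algebra structure on kernels --- which are exactly the facts the paper's shorter proof quietly relies on.
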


\begin{proof}
To simplify the notation, in this proof we write $\Cok = \coker$ and $\Ker = \ker$.  First we lift the functors $\Cok$ and $\Ker$.  Then we lift the unit and the counit for the adjunction.

\textit{Step 1: Lifting the Kernel and the Cokernel to Algebra Categories}
\smallskip

The functors in \eqref{cok-ker} lifts entrywise to the functors in \eqref{cok-ker-oalg} for the following reasons.
\begin{itemize}
\item The functor 
\[\begin{tikzcd}
\algompp & \algomtensor \ar{l}[swap]{\Ker}
\end{tikzcd}\]
exists because $\Ker : \arrowmtensor \to \arrowmpp$ is a lax symmetric monoidal functor and $\Ker \arrowotensor = \arrowopp$ by \eqref{LofO}. 
\item The functor 
\[\begin{tikzcd}
\algompp \ar{r}{\Cok} & \algomtensor
\end{tikzcd}\]
exists because $\Cok : \arrowmpp \to \arrowmtensor$ is a strong symmetric monoidal functor and $\Cok \arrowopp = \arrowotensor$ by \eqref{LofO}.
\end{itemize} 
More explicitly, suppose $(f,\lambda)$ is an $\arrowopp$-algebra as in Proposition \ref{smith-unravel}.  Then $\Cok f$ becomes an $\arrowotensor$-algebra with structure morphism $\lambda^\#$ given by the following composite for all $d,c_1,\ldots,c_n \in \fC$, with $\Cok^2 = \coker^2$ the monoidal constraint of the cokernel in \eqref{cokertwo}.
\begin{equation}\label{lambdasharp}
\begin{tikzcd}[cells={nodes={scale=.9}}]
\arrowotensor\duc \otimes \bigotimes_{i=1}^n \Cok f_{c_i} \ar[equal]{d} \ar{r}{\lambda^\#} &
\Cok f_d\\
\Cok\arrowopp\duc \otimes \bigotimes_{i=1}^n \Cok f_{c_i} \ar{r}{\Cok^2} &
\Cok\left(\arrowopp\duc \boxprod f_{c_1} \boxprod \cdots \boxprod f_{c_n} \right) \ar{u}[swap]{\Cok \lambda}
\end{tikzcd}
\end{equation}
The $\arrowotensor$-algebra axioms for $(\Cok f,\lambda^\#)$ follow from the $\arrowopp$-algebra axiom for $(f,\lambda)$ and the symmetric monoidal axioms for the cokernel.  The same reasoning also applies to the kernel.

Thus there is a diagram of functors
\begin{equation}\label{cokernel-kernel-o}
\begin{tikzcd}[column sep=large]
\algompp \ar{d}[swap]{U} \ar[shift left]{r}{\Cok} & 
\algomtensor \ar[shift left]{l}{\Ker} \ar{d}{U}\\
\arrowmppc \ar[shift left]{r}{\Cok} & \arrowmtensorc \ar[shift left]{l}{\Ker}
\end{tikzcd}
\end{equation}
with both $U$ forgetful functors and 
\[U\Ker = \Ker U.\]
To see that this equality holds, suppose $(f,\lambda)$ is an $\arrowotensor$-algebra as in the proof of Proposition \ref{arrowotensor-algebra}.  As in \eqref{lambdasharp}, the $\arrowopp$-algebra $\Ker(f,\lambda)$ is given by $(\Ker f, \lambda')$, where the $\arrowopp$-algebra structure morphism $\lambda'$ is constructed from the monoidal constraint $\Ker^2$ and $\Ker\lambda$.  Since each $U$ forgets the operad algebra structure morphism, we obtain the equalities
\[U\Ker(f,\lambda) = U(\Ker f,\lambda') = \Ker f = \Ker U(f,\lambda).\]
The equality $U\Ker = \Ker U$ holds on $\arrowotensor$-algebra morphisms because (i) both $\Ker$ apply entrywise to morphisms and (ii) both $U$ do not change the morphisms.

Next we show that the unit and the counit, 
\[\eta : \Id \to \Ker\Cok \andspace \varepsilon : \Cok \Ker \to \Id,\]
of the bottom adjunction $\Cok \dashv \Ker$ in \eqref{cokernel-kernel-o} lift to the top between algebra categories.  

\textit{Step 2: Lifting the Unit}
\smallskip

To show that $\eta$ defines a natural transformation for the top functors in \eqref{cokernel-kernel-o}, 
first we need to show that, for each $\arrowopp$-algebra $(f,\lambda)$, the unit component morphism $\eta_f : f \to \Ker\Cok f$ in $\arrowm^\fC$ is an $\arrowopp$-algebra morphism.  So we must show that the diagram \eqref{eta-alg-morphism} below in $\arrowm$ is commutative for $d,c_1,\ldots,c_n \in \fC$, with $\lambda^\#$ as in \eqref{lambdasharp}, $\eta_i = \eta_{f_{c_i}}$, $\arrowopp = \Ker \arrowotensor$ by \eqref{LofO}, and $\Ker^2 = \ker^2$ the monoidal constraint defined in \eqref{kertwo}.
\begin{equation}\label{eta-alg-morphism}
\begin{tikzcd}[column sep=large]
\arrowopp\duc \boxprod \lbox_{i=1}^n f_{c_i} \ar{d}[swap]{\Id \boxprod\, \lbox_i \eta_i} \ar{r}{\lambda} & f_d \ar{dd}{\eta_{f_d}}\\
\Ker\arrowotensor \duc \boxprod \lbox_{i=1}^n \Ker\Cok f_{c_i} \ar{d}[swap]{\Ker^2} &\\
\Ker\left(\arrowotensor\duc \otimes \bigotimes_{i=1}^n \Cok f_{c_i} \right) \ar{r}{\Ker \lambda^\#} & \Ker\Cok f_d
\end{tikzcd}
\end{equation}
To see that \eqref{eta-alg-morphism} is commutative, we consider the adjoint of each composite, which yields the boundary of the following diagram in $\arrowm$.
\begin{equation}\label{eta-alg-adjoint}
\begin{tikzcd}[column sep=-2em,cells={nodes={scale=.85}}]
\Cok\left( \arrowopp\duc \boxprod \lbox_{i=1}^n f_{c_i}\right) 
\ar{dd}[swap]{\Cok(\Id \boxprod \, \lbox_i \eta_i)} \ar{dr}[pos=.75]{(\Cok^2)^{-1}} 
\ar{rrr}{\Cok\lambda} &&& \Cok(f_d)\\
& \Cok\arrowopp\duc \otimes \bigotimes_{i=1}^n \Cok f_{c_i} 
\ar{dr}[swap,pos=.3]{\Id \otimes \bigotimes_i \Cok\eta_i} \ar[equal]{rr} 
&& \arrowotensor\duc \otimes \bigotimes_{i=1}^n \Cok f_{c_i} \ar{u}[swap]{\lambda^\#}\\
\Cok\left( \Ker\arrowotensor\duc \boxprod \lbox_{i=1}^n \Ker\Cok f_{c_i}\right) 
\ar{rr}[swap]{(\Cok^2)^{-1}} &&
\Cok\Ker\arrowotensor\duc \otimes \bigotimes_{i=1}^n \Cok\Ker\Cok f_{c_i} 
\ar{ur}[swap,pos=.75]{\varepsilon_{\arrowotensor\duc} \otimes\, \bigotimes_i \varepsilon_{\Cok f_{c_i}}} &
\end{tikzcd}
\end{equation}
The three sub-regions in \eqref{eta-alg-adjoint} are commutative for the following reasons.
\begin{itemize}
\item The left triangle is commutative by the naturality of the monoidal constraint $\Cok^2 = \coker^2$ of the cokernel.
\item The upper right region is commutative by the definition of $\lambda^\#$ in \eqref{lambdasharp}.
\item To see that the lower right triangle is commutative, first note that the counit component morphism
\begin{equation}\label{epz-o-id}
\varepsilon_{\arrowotensor\duc} : \Cok\Ker\arrowotensor\duc \to \arrowotensor\duc
\end{equation}
is the identity, since by \eqref{LofO}
\[\Cok\Ker\arrowotensor = \Cok\arrowopp = \arrowotensor.\]
For each of the other $n$ tensor factors in the lower right triangle, the composite $\varepsilon_{\Cok f_{c_i}} \circ \Cok\eta_i$ is the identity morphism by one of the triangle identities for the adjunction $\Cok \dashv \Ker$ \cite{maclane} (IV.1 Theorem 1).
\end{itemize}
This proves that $\eta_f : f \to \Ker\Cok f$ is an $\arrowopp$-algebra morphism.  Moreover, $\eta$ is natural with respect to $\arrowopp$-algebra morphisms because a diagram in $\algompp$ is commutative if and only if its underlying diagram in $\arrowm^\fC$ is commutative.  Thus the unit $\eta : \Id \to \Ker\Cok$ is a natural transformation for the top horizontal functors in \eqref{cokernel-kernel-o} between algebra categories.

\textit{Step 3: Lifting the Counit}
\smallskip

Next we show that the counit $\varepsilon : \Cok\Ker \to \Id$ of the bottom adjunction $\Cok \dashv \Ker$ in \eqref{cokernel-kernel-o} lifts to the top between algebra categories.  First we need to show that, for each $\arrowotensor$-algebra $(g,\lambda)$, the counit component morphism $\varepsilon_g : \Cok\Ker g \to g$ in $\arrowm^\fC$ is an $\arrowotensor$-algebra morphism.  Denote by 
\[(\Ker g, \labar) = \Ker(g,\lambda)\] 
the $\arrowopp$-algebra obtained by applying the top functor $\Ker$ in \eqref{cokernel-kernel-o}.  The $\arrowopp$-algebra structure morphism $\labar$ is the analogue of \eqref{lambdasharp} for the kernel.  In other words, it is the composite
\begin{equation}\label{labardef}
\labar = (\Ker \lambda) \circ \Ker^2
\end{equation}
with $\Ker^2 = \ker^2$ the monoidal constraint \eqref{kertwo}.

As noted in \eqref{epz-o-id}, each component $\varepsilon_{\arrowotensor\duc}$ is the identity.  With $(-)^\#$ as in \eqref{lambdasharp}, $\varepsilon_g$ is an $\arrowotensor$-algebra morphism if and only if the boundary of the diagram \eqref{epz-alg-morphism} below in $\arrowm$ commutes for $d,c_1,\ldots,c_n \in \fC$, with $\varepsilon_i = \varepsilon_{g_{c_i}}$, $\arrowotensor = \Cok\arrowopp$, and $\arrowopp = \Ker\arrowotensor$ by \eqref{LofO}.
\begin{equation}\label{epz-alg-morphism}
\begin{tikzcd}[column sep=-2em,cells={nodes={scale=.9}}]
\arrowotensor\duc \otimes \bigotimes_{i=1}^n \Cok\Ker g_{c_i} \ar{ddd}[swap]{\Id \otimes \bigotimes_i \varepsilon_i} \ar{dr}[pos=.75]{\Cok^2} \ar{rrr}{\labar^\#} &&& \Cok\Ker g_d \ar[bend left=15]{ddd}{\varepsilon_{g_d}}\\
& \Cok\left( \arrowopp\duc \boxprod \lbox_{i=1}^n \Ker g_{c_i}\right) \ar{dr}[pos=.7]{\Cok(\Ker^2)} \ar{urr}[pos=.25]{\Cok \labar} &&\\
&& \Cok\Ker\left(\arrowotensor\duc \otimes \bigotimes_{i=1}^n g_{c_i}\right) \ar{dll}[swap,pos=.6]{\varepsilon} \ar{uur}[pos=.5]{\Cok\Ker \lambda} &\\
\arrowotensor\duc \otimes \bigotimes_{i=1}^n g_{c_i} \ar{rrr}{\lambda} &&& g_d
\end{tikzcd}
\end{equation}
The four sub-regions in \eqref{epz-alg-morphism} are commutative for the following reasons.
\begin{itemize}
\item The top triangle is commutative by the definition of $(-)^\#$ in \eqref{lambdasharp}.
\item The triangle to its lower right is commutative by the definition of $\labar$ in \eqref{labardef} and the functoriality of $\Cok$.
\item The lower right quadrilateral is commutative by the naturality of the counit $\varepsilon : \Cok\Ker \to \Id$.
\end{itemize}
Using the inverse of $\Cok^2 = \coker^2$, the left triangle in \eqref{epz-alg-morphism} is equivalent to the following diagram.
\begin{equation}\label{epz-alg-diag}
\begin{tikzcd}[column sep=large]
\Cok\left( \Ker\arrowotensor\duc \boxprod \lbox_{i=1}^n \Ker g_{c_i}\right) \ar{d}[swap]{(\Cok^2)^{-1}} \ar{r}{\Cok(\Ker^2)} & 
\Cok\Ker\left(\arrowotensor\duc \otimes \bigotimes_{i=1}^n g_{c_i}\right) \ar{d}{\varepsilon}\\
\Cok\Ker\arrowotensor\duc \otimes \bigotimes_{i=1}^n \Cok\Ker g_{c_i} \ar{r}{\varepsilon \,\otimes\, \bigotimes_i \varepsilon_i} &
\arrowotensor\duc \otimes \bigotimes_{i=1}^n g_{c_i} 
\end{tikzcd}
\end{equation}
The diagram \eqref{epz-alg-diag} is commutative because the adjoint of each composite is $\Ker^2 = \ker^2$ defined in \eqref{kertwo}.  This shows that \eqref{epz-alg-morphism} is commutative, and $\varepsilon_g$ is an $\arrowotensor$-algebra morphism.  

Moreover, $\varepsilon$ is natural with respect to $\arrowotensor$-algebra morphisms because a diagram in $\algomtensor$ is commutative if and only if its underlying diagram in $\arrowm^\fC$ is commutative.  Thus the counit $\varepsilon : \Cok\Ker \to \Id$ is a natural transformation for the top horizontal functors in \eqref{cokernel-kernel-o} between algebra categories.

Finally, the lifted natural transformations $\eta$ and $\varepsilon$ satisfy the triangle identities for an adjunction \cite{maclane} (IV.1 Theorem 1) because diagrams in $\algompp$ and $\algomtensor$ are commutative if and only if their underlying diagrams in $\arrowm^\fC$ are commutative.  This proves that the top horizontal functors $(\Cok,\Ker)$ in \eqref{cokernel-kernel-o} form an adjunction with the lifted unit and counit.
\end{proof}

%Since $\arrowmtensor$ is cocomplete, so is the algebra category $\algomtensor$ \cite{white-yau} (4.2.1).  Moreover, since both vertical functors $U$ are monadic \cite{white-yau} (4.1.11), the Adjoint Lifting Theorem \cite{borceux} (4.5.6) implies that the top horizontal functor $\ker$ admits a left adjoint $L'$.  
%Normally this induced left adjoint $L'$ is \emph{not} lifted entrywise from the bottom horizontal left adjoint.  However, since the cokernel functor is a strictly unital strong symmetric monoidal functor and since $\coker \arrowopp = \arrowotensor$ by \eqref{LofO}, the induced left adjoint $L'$ between the algebra categories is given entrywise by $\coker$.

%====================================================
\section{Homotopy Theory of Smith Ideals for Operads}
\label{sec:homotopy-smith}

In this section, we study the homotopy theory of Smith ideals for an operad $\O$.  Under suitable conditions on the underlying monoidal model category $\M$, in Def. \ref{def:smitho-model} we define model structures on the categories of Smith $\O$-ideals and of $\O$-algebra morphisms.  When $\M$ is pointed, the cokernel and the kernel yield a Quillen adjunction between these model categories.  Furthermore, in Theorem \ref{smith=map} we show that if $\M$ is stable and if cofibrant Smith $\O$-ideals are entrywise cofibrant in $\arrowmppproj$, then the cokernel and the kernel yield a Quillen equivalence between the categories of Smith $\O$-ideals and of $\O$-algebra morphisms.

\begin{definition}\label{def:admissibleO}
We say that a $\fC$-colored operad $\O$ is \emph{admissible} if $\algom$ admits a transferred model structure, with weak equivalences and fibrations defined entrywise in $\M^\fC$. 
\end{definition}

%====================================
\subsection{Admissibility of Operads}
The following result is \cite{white-yau} (6.1.1 and 6.1.3).

\begin{theorem}\label{spade}
Suppose $\M$ is an operadically cofibrantly generated (Def. \ref{def:operadically-cof-gen}) monoidal model category satisfying the following condition.
\begin{quote}
$(\spadesuit)$ : For each $n \geq 1$ and for each object $X \in \Msigmanop$, the function
\[X \tensorover{\Sigma_n} (-)^{\boxprod n} : \M \to \M\]
takes trivial cofibrations into some subclass of weak equivalences that is closed under transfinite composition and pushout.
\end{quote}
Then each $\fC$-colored operad $\O$ in $\M$ is admissible in the sense of Definition \ref{def:admissibleO}.
\end{theorem}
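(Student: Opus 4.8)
The plan is to obtain the model structure on $\algom$ by \emph{transferring} the entrywise (product) model structure on $\M^{\fC}$ along the free--forgetful adjunction $F : \M^{\fC} \adjoint \algom : U$, where $U$ sends $(A,\lambda)$ to its underlying color-indexed object $\{A_c\}_{c\in\fC}$; the free functor $F$ exists because $\M$ is cocomplete and $\tensor$ commutes with colimits. Here $\M^{\fC}$ is cofibrantly generated, with generating (trivial) cofibrations $I^{\fC}$ (resp.\ $J^{\fC}$) obtained by inserting the generators of $\M$ in each color, and $\algom$ is bicomplete (cocompleteness is \cite{white-yau} (4.2.1), and limits are created by the monadic $U$). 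I would then invoke the standard cofibrant-generation transfer theorem (\cite{hirschhorn} (11.3.2)): to put a cofibrantly generated model structure on $\algom$ with generating (trivial) cofibrations $F(I^{\fC})$ and $F(J^{\fC})$ and with weak equivalences and fibrations created by $U$, it suffices to verify (i) that $F(I^{\fC})$ and $F(J^{\fC})$ permit the small object argument, and (ii) that $U$ carries every relative $F(J^{\fC})$-cell complex to a weak equivalence in $\M^{\fC}$.

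Condition (i) is exactly where strong cofibrant generation is used. Since $\M$ is strongly cofibrantly generated, the domains of $I$ and $J$ are small relative to all of $\M$, hence the domains of $I^{\fC}$ and $J^{\fC}$ are small in $\M^{\fC}$. Because $U$ is monadic (\cite{white-yau} (4.1.11)) and its monad preserves filtered colimits (as $\tensor$ preserves colimits), $U$ creates and therefore preserves filtered colimits and transfinite compositions. The adjunction isomorphism $\algom\bigl(F K,-\bigr) \cong \M^{\fC}\bigl(K,U(-)\bigr)$ then transports this smallness to the domains $F K$ inside $\algom$, so the small object argument runs for $F(I^{\fC})$ and $F(J^{\fC})$.

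The content of the proof lies in condition (ii). Write $\W'$ for the subclass of weak equivalences supplied by $(\spadesuit)$. Since $\W'$ is closed under transfinite composition and every relative $F(J^{\fC})$-cell complex is a transfinite composition of pushouts of coproducts of maps in $F(J^{\fC})$, it suffices to show that each such pushout lies in $\W'$ after applying $U$. So consider a pushout in $\algom$ along $F(j)$, where $j$ is a coproduct of generators in $J^{\fC}$ and hence a trivial cofibration in $\M^{\fC}$, producing $A \to A'$. Because $U$ does not preserve this pushout, the key input is a \emph{filtration lemma} that computes $U(A')$: one constructs a sequence $U(A) = A_0 \to A_1 \to A_2 \to \cdots$ with $U(A') = \colim_n A_n$ in which each layer $A_{n-1} \to A_n$ is obtained, at the level of $\M^{\fC}$, as a pushout of the map
\[\O_A[n] \tensorover{\Sigma_n} \bigl(j^{\boxprod n}\bigr) \colon \O_A[n] \tensorover{\Sigma_n} \dom\bigl(j^{\boxprod n}\bigr) \longrightarrow \O_A[n] \tensorover{\Sigma_n} L^{\tensor n},\]
where $\O_A$ is the enveloping operad of $A$ and $\O_A[n]$ carries the residual $\Sigma_n$-action on the $n$ inputs to be filled, so that (up to color bookkeeping) $\O_A[n]$ is an object of $\Msigmanop$ and $L$ is the codomain of $j$. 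This is precisely the shape to which $(\spadesuit)$ applies: taking $X = \O_A[n]$, the trivial cofibration $j$ is sent by $X \tensorover{\Sigma_n}(-)^{\boxprod n}$ into $\W'$. Closure of $\W'$ under pushout gives $A_{n-1} \to A_n \in \W'$, and closure under transfinite composition gives $U(A) \to U(A') \in \W'$, hence a weak equivalence. A final appeal to transfinite-composition closure upgrades this from one pushout to an arbitrary relative $F(J^{\fC})$-cell complex, establishing (ii).

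I expect the filtration lemma to be the main obstacle. Identifying the successive quotients of the free $\O$-algebra on a pushout with the iterated pushout-product expression $\O_A[n] \tensorover{\Sigma_n} (j^{\boxprod n})$ requires setting up the enveloping operad $\O_A$ and a careful coend/cofinality analysis, with the colored case adding bookkeeping over profiles in $\Sigmac$; the cofinality of the punctured-cube subcategory used in the proof of Prop. \ref{smith-unravel} is a small instance of the combinatorics involved. By contrast, once the filtration is in hand, everything reduces to a formal application of $(\spadesuit)$ together with the closure of $\W'$ under pushout and transfinite composition.
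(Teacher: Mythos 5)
Your proposal is correct and is essentially the paper's own route: the paper establishes Theorem \ref{spade} by citing \cite{white-yau} (6.1.1 and 6.1.3), whose proof is precisely your transfer along the free--forgetful adjunction $\M^{\fC} \adjoint \algom$ (with strong cofibrant generation supplying the smallness needed to run the small object argument through the filtered-colimit-preserving monadic $U$), combined with the enveloping-operad filtration \cite{white-yau} (4.3.16) whose layers are pushouts of maps of the form $\O_A \tensorover{\Sigma_n} j^{\boxprod n}$, exactly the shape to which $(\spadesuit)$ and the closure of $\W'$ under pushout and transfinite composition apply. The filtration lemma you flag as the main obstacle is indeed the technical heart, and it is the same filtration this paper later invokes in Lemma \ref{pushout-undelrying-cof}.
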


\begin{example}\label{spade-examples}
Strongly cofibrantly generated monoidal model categories that satisfy $(\spadesuit)$ include:
\begin{enumerate}
\item Pointed or unpointed simplicial sets \cite{quillen} and all of their left Bousfield localizations \cite{hirschhorn}.%, by \cite{white-yau} (8.2.2).
\item Bounded or unbounded chain complexes over a commutative ring containing the rationals $\mathbb{Q}$ \cite{quillen}.%, by \cite{white-yau} (8.1.1).
\item Symmetric spectra built on either simplicial sets or compactly generated topological spaces, motivic symmetric spectra, and $G$-equivariant symmetric spectra with either the positive stable model structure or the positive flat stable model structure \cite{dmitri}.%, by \cite{white-yau} (8.3.1), Hornbostel "preorientations" 3.15 for motivic
\item The category of small categories with the folk model structure \cite{rezk}.% by \cite{white-yau2} (Section 13) and the proof of \cite{white-yau} (8.2.3), since the cofibrations are precisely the functors that are injective on objects.
\item Simplicial modules over a field of characteristic zero \cite{quillen}.% by \cite{white-yau3} (3.5.2) and the observation that all objects are cofibrant. \davidnote{please check. I think cofibrations are retracts of free maps, and everything is free.}
\item The stable module category of $k[G]$-modules \cite{hovey} (2.2), where $k$ is a field and $G$ is a finite group. %, by \cite{white-yau2} (12.2).
We recall that the homotopy category of this example is trivial unless the characteristic of $k$ divides the order of $G$ (the setting for \textit{modular} representation theory).
\end{enumerate}
The condition $(\spadesuit)$ for (1)--(2) is proved in \cite{white-yau} (Section 8, which also handles symmetric spectra built on simplicial sets), and (4)--(5) can be proved using similar arguments.  The condition $(\spadesuit)$ for the stable module category is proved by the argument in \cite{white-yau2} (12.2).  For symmetric spectra built on topological spaces, motivic symmetric spectra, and equivariant symmetric spectra, we refer to \cite{dmitri} (Section 2, and the references therein) starting with $\cat C = Top$, $sSet^G$, $Top^G$, and the $\mathbb{A}^1$-localization of simplicial presheaves with the injective model structure. 

In each of these examples except those built from $Top$, the domains and the codomains of the generating (trivial) cofibrations are small with respect to the entire category.  So Proposition \ref{arrrowm-strongly} applies to show that, in each case, the arrow category with either the injective or the projective model structure is strongly cofibrantly generated. The category of (equivariant) symmetric spectra built on topological spaces is operadically cofibrantly generated by an argument analogous to that of Example \ref{example:top-operadically-cof-gen}, as are the arrow categories, by the remark below.
\end{example}

\begin{remark}
In \cite{white-yau} (6.1.1 and 6.1.3), $\M$ is assumed to be strongly cofibrantly generated, but actually operadically cofibrantly generated suffices for the proof. The smallness hypothesis is required in order to run the small object argument, and $\O \circ I$ (resp. $\O \circ J$) are the generating (trivial) cofibrations. We have previously pointed out that \emph{operadically cofibrantly generated} is a sufficient smallness hypothesis in \cite{white-yau2} (5.7). The proof of Proposition \ref{arrrowm-strongly} also proves that, if $\M$ is operadically cofibrantly generated, then so are $\arrowmpp$ and $\arrowmtensor$.
\end{remark}

Even if $(\spadesuit)$ is not satisfied, sometimes the classes of morphisms defined in Theorem \ref{spade} in $\algom$ define a semi-model structure \cite{white-yau} (6.2.3 and 6.3.1). We therefore phrase our arguments in this section to only rely on the semi-model category axioms in categories of algebras. In Section \ref{sec:appendix}, we include a comparison to the $\infty$-categorical approach to encoding the homotopy theory of operad-algebras.

%=========================================
\subsection{Admissibility of Operads in the Arrow Category}

Recall the injective model structure on the arrow category, which is a monoidal model category if $\M$ is, by Theorem \ref{injective-model}.

\begin{theorem}\label{spade-arrow-tensor}
If $\M$ is a monoidal model category satisfying $(\spadesuit)$, then so is $\arrowmtensorinj$.  Therefore, if $\M$ is also cofibrantly generated in which the domains and the codomains of all the generating (trivial) cofibrations are small in $\M$, then every $\fC$-colored operad on $\arrowmtensorinj$ is admissible.
\end{theorem}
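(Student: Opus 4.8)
The plan is to reduce the condition $(\spadesuit)$ for $\arrowmtensorinj$ to the same condition for $\M$, and then to feed the result into Theorem \ref{spade}. Since $\arrowmtensorinj$ is already a monoidal model category by Theorem \ref{injective-model}(3), the only substantive task is to verify $(\spadesuit)$; the rest is bookkeeping.

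First I would isolate the structural fact that drives everything: viewing $\arrowm$ as the functor category $\M^{[1]}$, the two evaluation functors $\Ev_0, \Ev_1 : \arrowm \to \M$ preserve all colimits (computed pointwise in a functor category) and, by the very definition of the entrywise tensor $\arrowmtensor$, are strict symmetric monoidal. Consequently each $\Ev_i$ commutes with pushouts, transfinite compositions, the pushout product $\boxprod$ (formed from $\otimes$ and pushouts in $\arrowm$), its iterates $(-)^{\boxprod n}$, the $\Sigma_n$-action, and the coinvariant construction. Spelling this out, for $\hat{X} \in \arrowmtensorsigmanop$ and any map $\gamma$ of $\arrowm$ I would record the entrywise identity
\[ \Ev_i\bigl(\hat{X} \tensorover{\Sigma_n} \gamma^{\boxprod n}\bigr) = (\Ev_i \hat{X}) \tensorover{\Sigma_n} (\Ev_i \gamma)^{\boxprod n} \]
for $i=0,1$. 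The hard part of the argument is precisely this identity: one must be careful that the pushout-product power and its $\Sigma_n$-coinvariants are genuinely computed entrywise, which is exactly what strict monoidality together with cocontinuity of $\Ev_0$ and $\Ev_1$ guarantees; everything afterward is formal transport along the evaluation functors.

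With the identity in hand I would verify $(\spadesuit)$ as follows. Let $\gamma$ be a trivial cofibration in $\arrowmtensorinj$; by Theorem \ref{injective-model}(1), both $\Ev_0\gamma$ and $\Ev_1\gamma$ are trivial cofibrations in $\M$, since injective weak equivalences and cofibrations are each detected entrywise. Fix $n \geq 1$ and $\hat{X}$. Applying $(\spadesuit)$ for $\M$ to the objects $\Ev_0\hat{X}, \Ev_1\hat{X} \in \Msigmanop$ produces subclasses $\W_0, \W_1$ of weak equivalences of $\M$, each closed under transfinite composition and pushout, with $(\Ev_i\hat{X}) \tensorover{\Sigma_n} (\Ev_i\gamma)^{\boxprod n} \in \W_i$. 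I would then define $\W \subseteq \arrowm$ to consist of those maps $\delta$ with $\Ev_0\delta \in \W_0$ and $\Ev_1\delta \in \W_1$. This $\W$ is a class of entrywise weak equivalences, hence of weak equivalences in $\arrowmtensorinj$; it is closed under transfinite composition and pushout because each $\Ev_i$ preserves these colimits and each $\W_i$ is so closed; and by the displayed identity it contains every $\hat{X} \tensorover{\Sigma_n} \gamma^{\boxprod n}$. This establishes $(\spadesuit)$ for $\arrowmtensorinj$.

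For the concluding ``therefore,'' I would add the cofibrant-generation and smallness hypotheses and invoke Proposition \ref{arrrowm-strongly} to conclude that $\arrowminj$, and hence $\arrowmtensorinj$, is strongly cofibrantly generated. Having just checked $(\spadesuit)$, Theorem \ref{spade} applies verbatim with $\arrowmtensorinj$ playing the role of the ambient monoidal model category, yielding that every $\fC$-colored operad on $\arrowmtensorinj$ is admissible.
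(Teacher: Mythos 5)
Your proposal is correct and takes essentially the same route as the paper: both proofs verify $(\spadesuit)$ for $\arrowmtensorinj$ by observing that $\Ev_0$ and $\Ev_1$ are strict monoidal and preserve the relevant colimits, so that $X \tensorover{\Sigma_n} \gamma^{\boxprod n}$ is computed entrywise, and both take the witnessing subclass in the arrow category to consist of maps whose entries lie in the class(es) supplied by $(\spadesuit)$ in $\M$, concluding admissibility via Theorem \ref{injective-model}, Proposition \ref{arrrowm-strongly}, and Theorem \ref{spade}. The only cosmetic difference is that the paper fixes one class $\C$ and sets $\C' = \{\beta : \beta_0, \beta_1 \in \C\}$, whereas you allow separate classes $\W_0, \W_1$ for the two entries, which is equally valid given the quantifier structure of $(\spadesuit)$.
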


\begin{proof}
Suppose $\M$ satisfies $(\spadesuit)$ with respect to a subclass $\C$ of weak equivalences that is closed under transfinite composition and pushout.  We write $\C'$ for the subclass of weak equivalences $\beta$ in $\arrowmtensorinj$ such that $\beta_0,\beta_1 \in \C$.  Then $\C'$ is closed under transfinite composition and pushout.

Suppose $f_X : X_0 \to X_1$ is an object in $\arrowmtensorsigmanop$ and $\alpha : f_V \to f_W$,
\begin{equation}\label{alphavw}
\nicexy@R-.3cm{V_0 \ar[d]_-{f_V} \ar[r]^-{\alpha_0} & W_0 \ar[d]^-{f_W}\\ V_1 \ar[r]^-{\alpha_1} & W,}
\end{equation}
is a trivial cofibration in $\arrowmtensorinj$.  We will show that $f_X \tensor_{\Sigma_n} \alpha^{\boxprod n}$ belongs to $\C'$.  The morphism $f_X \tensor_{\Sigma_n} \alpha^{\boxprod n}$ in $\arrowmtensor$ is the commutative square
\[\nicexy@C+.4cm{X_0 \tensorover{\Sigma_n} \dom(\alpha_0^{\boxprod n}) \ar[d]_-{f_X \tensorover{\Sigma_n} f_*} \ar[r]^-{X_0 \tensorover{\Sigma_n} \alpha_0^{\boxprod n}} & X_0 \tensorover{\Sigma_n} W_0^{\tensor n} \ar[d]^-{f_X \tensorover{\Sigma_n} f_W^{\tensor n}}\\
X_1 \tensorover{\Sigma_n} \dom(\alpha_1^{\boxprod n}) \ar[r]^-{X_1 \tensorover{\Sigma_n} \alpha_1^{\boxprod n}} & X_1 \tensorover{\Sigma_n} W_1^{\tensor n}}\]
in $\M$, where $f_*$ is induced by $f_V$ and $f_W$.  Since $\alpha_0$ and $\alpha_1$ are trivial cofibrations in $\M$ and since $X_0, X_1 \in \Msigmanop$, the condition $(\spadesuit)$ in $\M$ implies that the two horizontal morphisms in the previous diagram are both in $\C$.  This shows that $\arrowmtensorinj$ satisfies $(\spadesuit)$ with respect to the subclass $\C'$ of weak equivalences.  

The second assertion is now a consequence of Proposition \ref{arrrowm-strongly}, Example \ref{example:top-operadically-cof-gen}, and Theorem \ref{spade}.
\end{proof}

\begin{definition}\label{def:smitho-model}
Suppose $\M$ is a cofibrantly generated monoidal model category satisfying $(\spadesuit)$ in which the domains and the codomains of the generating (trivial) cofibrations are small with respect to the entire category.  Suppose $\O$ is a $\fC$-colored operad in $\M$.
\begin{enumerate}
\item Equip the category of Smith $\O$-ideals $\algompp$ with the model structure given by Corollary \ref{smitho-m} and Theorem \ref{spade}.  In other words, a morphism $\alpha$ of Smith $\O$-ideals is a weak equivalence (resp., fibration) if and only if $\alpha_0$ and $\alpha_1$ are color-wise weak equivalences (resp., fibrations) in $\M$.
\item Equip the category $\algomtensor$ with the model structure given by Theorem \ref{spade-arrow-tensor}.  In other words, a morphism $\alpha$ in $\algomtensor$ is a weak equivalence (resp., fibration) if and only if $\alpha^c$ ($=$ the $c$-colored entry of $\alpha$) is a weak equivalence (resp., fibration) in $\arrowmtensorinj$ for each $c \in \fC$.  
\end{enumerate}

When $(\spadesuit)$ is not satisfied but the classes of morphisms above still define semi-model structures (e.g., Remark \ref{remark:semi-com}, Corollary \ref{sigmacof-smith=map}, and Theorem \ref{underlying-cofibrant}), we still denote those semi-model structures by $\algompp$ and $\algomtensor$. 
\end{definition}

\begin{remark}
Recall diagram (\ref{cokernel-kernel-o}).
In Definition \ref{def:smitho-model} the (semi-)model structure on Smith $\O$-ideals is induced by the forgetful functor to $\M^{\fC \sqcup \fC}$, so its weak equivalences and fibrations are defined entrywise in $\M$, or equivalently in $\arrowmppproj$.  On the other hand, the model structure on $\O$-algebra morphisms $\algomtensor$ is induced by the forgetful functor to $(\arrowmtensorinj)^{\fC}$.  The (trivial) fibrations in $\algomtensor$ are, in particular, entrywise (trivial) fibrations in $\M$.  However, they are \emph{not} defined entrywise in $\M$, since (trivial) fibrations in $\arrowmtensorinj$ are not defined entrywise in $\M$, as explained in Theorem \ref{injective-model}.
\end{remark}

Suppose $K \subseteq \M$ is a subclass of morphisms in a category $\M$ with a chosen initial object and $\fC$ is a set with $c \in \fC$.  We denote by 
\[K_c \subseteq \M^\fC\]
the subclass of morphisms in which the morphisms in $K$ are concentrated in the $c$-entry with all other entries the initial object.  The following observation will be used in the proof of Theorem \ref{underlying-cofibrant} below.

\begin{proposition}\label{cgoverarrowm}
In the context of Definition \ref{def:smitho-model}, the (semi-)model structure on Smith $\O$-ideals is  cofibrantly generated with generating cofibrations $\arrowopp \circ (L_0I \cup L_1I)_c$ and generating trivial cofibrations $\arrowopp \circ (L_0J \cup L_1J)_c$ for $c \in \fC$, where $I$ and $J$ are the sets of generating cofibrations and generating trivial cofibrations in $\M$.  %Here $(L_0I \cup L_1I)_c$ means that the morphisms in $L_0I \cup L_1I \subseteq \arrowmppproj$ are regarded as morphisms in $(\arrowmppproj)^{\fC}$ concentrated in a single color $c$ with all other entries the initial object.
\end{proposition}

\begin{proof}
The category $\algompp$ already has a (semi-)model structure, namely, the one in Def. \ref{def:smitho-model}(1), with weak equivalences and fibrations defined via the forgetful functor $U$ in the free-forgetful adjunction
\[\nicexy@C+1em{(\arrowmppproj)^{\fC} \ar@<2pt>[r]^-{\arrowopp \circ -} & \algompp, \ar@<2pt>[l]^-{U}}\]
since the weak equivalences and fibrations in $\arrowmproj$ are defined in $\M$. To see that $\algompp$ has a cofibrantly generated model structure with weak equivalences and fibrations defined entrywise in $\arrowmproj$ and with generating (trivial) cofibrations as stated above, we refer to the computations of Lemma 3.3 in \cite{johnson-yau}, which produces the sets $I$ and $J$, proves the requisite smallness, and proves that fibrations and trivial fibrations are characterized by lifting with respect to $I$ and $J$. Hence, this proof works just as well for semi-model categories. Since a (semi-)model structure is uniquely determined by the classes of weak equivalences and fibrations, this second model structure on $\algompp$ must be the same as the one in Definition \ref{def:smitho-model} (1).
\end{proof}

%==================================
\subsection{Quillen Adjunction Between Operadic Smith Ideals and Algebra Morphisms}

\begin{proposition}\label{prop:smith-map-qadjunction}
Suppose $\M$ is a pointed cofibrantly generated monoidal model category, in which the domains and the codomains of the generating (trivial) cofibrations are small with respect to the entire category.  Suppose $\O$ is a $\fC$-colored operad in $\M$ such that $\algompp$ and $ \algomtensor$ admit transferred semi-model structures as in Definition \ref{def:smitho-model}.  Then the adjunction
\begin{equation}\label{smith-map-qadjunction}
\nicexy{\algompp \ar@<3pt>[r]^-{\coker} & \algomtensor \ar@<1pt>[l]^-{\ker}}
\end{equation}
in \eqref{cok-ker-oalg} is a Quillen adjunction.
\end{proposition}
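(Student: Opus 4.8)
The plan is to establish the Quillen adjunction by verifying that the right adjoint $\ker$ preserves fibrations and trivial fibrations, which by Def.\ \ref{quillen.pair} is enough. The structural input that makes this tractable is the commutative diagram \eqref{cokernel-kernel-o} from the proof of Theorem \ref{thm:cok-ker-operad}: its right-hand square gives the identity $U\ker = \ker U$, where the two vertical functors $U$ forget down to the color-wise arrow categories $\arrowmppc$ and $\arrowmtensorc$.

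First I would record how (trivial) fibrations are detected on each side. On the target, by Def.\ \ref{def:smitho-model}(1) together with the fact that fibrations in $\arrowmppproj$ are entrywise in $\M$ (Theorem \ref{hovey-projective}), a map $\beta$ of Smith $\O$-ideals is a (trivial) fibration exactly when $U\beta$ is a color-wise (trivial) fibration in $\arrowmppproj$. On the source, by Def.\ \ref{def:smitho-model}(2), a map $\alpha$ in $\algomtensor$ is a (trivial) fibration exactly when $U\alpha$ is a color-wise (trivial) fibration in $\arrowmtensorinj$.

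Next I would reduce everything to the arrow-category level. Given a (trivial) fibration $\alpha$ in $\algomtensor$, the previous step makes $U\alpha$ a color-wise (trivial) fibration in $\arrowmtensorinj$. Hovey's theorem \cite{hovey-smith} (4.1) says that for pointed $\M$ the adjunction \eqref{cok-ker} is a Quillen adjunction between $\arrowmppproj$ and $\arrowmtensorinj$, so its right adjoint $\ker$ carries (trivial) fibrations of $\arrowmtensorinj$ to (trivial) fibrations of $\arrowmppproj$. Applying this in each color shows that $\ker(U\alpha)$ is a color-wise (trivial) fibration in $\arrowmppproj$. Invoking $U(\ker\alpha) = \ker(U\alpha)$ then identifies $U(\ker\alpha)$ as such a map, and by the first step this says precisely that $\ker\alpha$ is a (trivial) fibration in $\algompp$. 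Thus $\ker$ is right Quillen and \eqref{smith-map-qadjunction} is a Quillen adjunction.

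The point to watch---more a matter of bookkeeping than a genuine obstacle---is the asymmetry between the two arrow-category model structures: fibrations in the target are the entrywise (projective) ones, whereas fibrations in the source are the injective fibrations of $\arrowmtensorinj$, which are \emph{not} entrywise. Reconciling these two is exactly what Hovey's arrow-category Quillen adjunction accomplishes, so once the detection of (trivial) fibrations is pushed through the forgetful functors, no further lifting-property checks are required; the entire argument is color-wise.
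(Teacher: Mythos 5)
Your proposal is correct and follows essentially the same route as the paper's own proof: reduce to showing $\ker$ preserves (trivial) fibrations, pass through the forgetful functors via the identity $U\ker = \ker U$ from \eqref{cokernel-kernel-o}, and invoke Hovey's arrow-category Quillen adjunction \cite{hovey-smith} (4.1) color-wise. Your explicit bookkeeping of how (trivial) fibrations are detected on each side (projective/entrywise versus injective) is a slightly more spelled-out version of what the paper leaves implicit, but the argument is the same.
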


\begin{proof}
Suppose $\alpha$ is a (trivial) fibration in $\algomtensor$.  We must show that $\ker\alpha$ is a (trivial) fibration in $\algompp$, that is, an entrywise (trivial) fibration in $\M$.  Since (trivial) fibrations in $\arrowmppproj$ are defined entrywise in $\M$, it suffices to show that $U\ker\alpha$ is a (trivial) fibration in $(\arrowmppproj)^{\fC}$.  Since there is an  equality \eqref{cokernel-kernel-o}
\[U\ker\alpha = \ker U\alpha\]
and since $\ker : (\arrowmtensorinj)^{\fC} \to (\arrowmppproj)^{\fC}$ is a right Quillen functor by Proposition \ref{coker-ker-pair} \eqref{coker-iii}, we finish the proof by observing that $U\alpha \in (\arrowmtensorinj)^{\fC}$ is a (trivial) fibration.
\end{proof}

Recall that a pointed (semi-)model category is \emph{stable} if its homotopy category is a triangulated category \cite{hovey} (7.1.1).

\begin{proposition}\label{reflects-weq}
In the setting of Proposition \ref{prop:smith-map-qadjunction}, suppose $\M$ is also a stable (semi-)model category.  Then the right Quillen functor $\ker$ in \eqref{smith-map-qadjunction} reflects weak equivalences between fibrant objects.
\end{proposition}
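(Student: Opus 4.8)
The plan is to reduce the claim to a computation in the arrow category of $\M$, color by color, and then exploit the triangulated structure of $\Ho(\M)$. First I would record how weak equivalences are detected on both sides. By Def.~\ref{def:smitho-model}, a map in $\algomtensor$ (resp.\ $\algompp$) is a weak equivalence exactly when it is a color-wise weak equivalence in $\arrowmtensorinj$ (resp.\ $\arrowmppproj$), and by Theorems~\ref{injective-model} and \ref{hovey-projective} these are in turn detected entrywise in $\M$ through $\Ev_0$ and $\Ev_1$. Since $U\ker = \ker U$ by \eqref{cokernel-kernel-o} and the kernel is computed color-wise, a map $\alpha$ is a weak equivalence in $\algomtensor$ iff $U\alpha$ is one in $(\arrowmtensorinj)^{\fC}$, and $\ker\alpha$ is a weak equivalence in $\algompp$ iff $\ker U\alpha$ is one in $(\arrowmppproj)^{\fC}$. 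Since fibrations in $\algomtensor$ are detected by $U$, an object $A$ is fibrant iff $UA$ is fibrant. Hence it suffices to show that $\ker\colon\arrowmtensorinj \to \arrowmppproj$ reflects weak equivalences between fibrant objects.

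Fix a color and write the corresponding entries of $A$, $B$ as objects $g\colon Y_0 \to Y_1$ and $g'\colon Y_0' \to Y_1'$ of $\arrowmtensor$, and the entry of $\alpha$ as the commutative square with components $\alpha_0\colon Y_0\to Y_0'$ and $\alpha_1\colon Y_1\to Y_1'$. By Theorem~\ref{injective-model} a fibrant object of $\arrowmtensorinj$ is precisely a fibration between fibrant objects of $\M$, so $g$ and $g'$ are fibrations between fibrant objects. The kernel functor sends $g$ to the Smith ideal with underlying map $\ker g \to Y_0$, where $\ker g$ is the fiber of $g$; thus $\Ev_1\ker\alpha = \alpha_0$ and $\Ev_0\ker\alpha$ is the induced map $k\colon \ker g \to \ker g'$ on fibers. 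The hypothesis that $\ker\alpha$ is a weak equivalence therefore says exactly that $\alpha_0$ and $k$ are weak equivalences in $\M$, and the goal is to conclude that $\alpha_1$ is a weak equivalence.

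This is where stability is used. Because $g$ and $g'$ are fibrations between fibrant objects, the categorical kernels $\ker g$ and $\ker g'$ are genuine homotopy fibers, so
\[\ker g \to Y_0 \xrightarrow{g} Y_1 \andspace \ker g' \to Y_0' \xrightarrow{g'} Y_1'\]
are fiber sequences, which give rise to distinguished triangles in the triangulated category $\Ho(\M)$ (\cite{hovey} (7.1)). The triple $(k,\alpha_0,\alpha_1)$ is a morphism of these triangles, and two of its three components, $k$ and $\alpha_0$, are weak equivalences and so become isomorphisms in $\Ho(\M)$. By the five lemma in a triangulated category the remaining component $\alpha_1$ is also an isomorphism in $\Ho(\M)$, and since the weak equivalences of any model category are saturated, $\alpha_1$ is a weak equivalence in $\M$. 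Letting the color vary shows $U\alpha$, and hence $\alpha$, is a weak equivalence, as desired. The only delicate point---and the reason the statement is restricted to fibrant objects---is the identification of the categorical kernel with the homotopy fiber: without the fibrancy of $g$, the sequence $\ker g \to Y_0 \to Y_1$ need not be a fiber sequence, and the triangle argument would fail.
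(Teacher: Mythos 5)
Your proof is correct, and it diverges from the paper's at exactly the point where the real content lies. The opening reduction is the same in both: using $U\ker = \ker U$ and the fact that fibrations and weak equivalences in $\algomtensor$ and $\algompp$ are created by the forgetful functors, both arguments reduce the claim to showing that $\ker \colon (\arrowmtensorinj)^{\fC} \to (\arrowmppproj)^{\fC}$ reflects weak equivalences between fibrant objects. At that point the paper simply cites \cite{hovey-smith} (4.3), which says that in the stable case $(\coker, \ker)$ is a Quillen equivalence between $\arrowmppproj$ and $\arrowmtensorinj$, and then invokes \cite{hovey} (1.3.16) to conclude that this right Quillen equivalence reflects weak equivalences between fibrant objects. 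You instead reprove the relevant piece of Hovey's stable result from scratch: you identify fibrant objects of $\arrowmtensorinj$ as fibrations between fibrant objects of $\M$, observe that the categorical kernel is then a genuine homotopy fiber, and run the five lemma for morphisms of distinguished triangles in the triangulated category $\Ho(\M)$, finishing with saturation of the weak equivalences. Your route is longer but self-contained, and it has two expository virtues: it makes visible exactly where stability enters (only at the triangle step) and why the fibrancy hypothesis is indispensable (without it the strict kernel need not compute the homotopy fiber); moreover, it uses only the reflection property of $\ker$, which is strictly weaker than the full Quillen equivalence the paper imports. Two small points worth making explicit if you write this up: the triple $(k, \alpha_0, \alpha_1)$ is a morphism of distinguished triangles because the connecting map of a fiber sequence is natural in commutative squares of fibrations between fibrant objects, and the ``five lemma'' you invoke is the standard two-out-of-three statement for morphisms of distinguished triangles, proved via the long exact $\Hom$-sequences. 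Neither is a gap, but both deserve a sentence.
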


\begin{proof}
Suppose $\alpha$ is a morphism in $\algomtensor$ between fibrant objects such that $\ker \alpha \in \algompp$ is a weak equivalence.  So $\ker \alpha$ is entrywise a weak equivalence in $\M$, or equivalently $U\ker\alpha \in (\arrowmppproj)^{\fC}$ is a weak equivalence.  We must show that $\alpha$ is a weak equivalence, that is, that $U\alpha \in (\arrowmtensorinj)^{\fC}$ is a weak equivalence.  The morphism $U\alpha$ is still a morphism between fibrant objects, and 
\[\ker U\alpha = U\ker \alpha\]
is a weak equivalence in $(\arrowmppproj)^{\fC}$.  Since $\ker : (\arrowmtensorinj)^{\fC} \to (\arrowmppproj)^{\fC}$ is a right Quillen equivalence by Proposition \ref{coker-ker-pair} \eqref{coker-iv}, it reflects weak equivalences between fibrant objects by \cite{hovey} (1.3.16).  So $U\alpha$ is a weak equivalence.
\end{proof}

%===========================================
\subsection{Quillen Equivalence Between Operadic Smith Ideals and Algebra Morphisms}

The following result says that, under suitable conditions, Smith $\O$-ideals and $\O$-algebra morphisms have equivalent homotopy theories.

\begin{theorem}\label{smith=map}
Suppose $\M$ is a cofibrantly generated stable monoidal model category, and $\algompp$ and $\algomtensor$ admit transferred (semi-)model structures as in Definition \ref{def:smitho-model}.  Suppose $\O$ is a $\fC$-colored operad in $\M$ such that cofibrant $\arrowopp$-algebras are also underlying cofibrant in $(\arrowmppproj)^{\fC}$.  Then the Quillen adjunction
\[\nicexy{\algompp \ar@<3pt>[r]^-{\coker} & \algomtensor \ar@<1pt>[l]^-{\ker}}\]
is a Quillen equivalence.
\end{theorem}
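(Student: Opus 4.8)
The plan is to verify the recognition criterion for Quillen equivalences in \cite{hovey} (1.3.16). Since we already have a Quillen adjunction by Prop. \ref{prop:smith-map-qadjunction}, it suffices to check two things: that the right adjoint $\ker$ reflects weak equivalences between fibrant objects, and that for every cofibrant Smith $\O$-ideal $A$ the derived unit
\[ A \xrightarrow{\ \eta\ } \ker\coker A \xrightarrow{\ \ker j\ } \ker Z \]
is a weak equivalence in $\algompp$, where $j : \coker A \to Z$ is a fibrant replacement of $\coker A$ in $\algomtensor$. The first condition is exactly Prop. \ref{reflects-weq}, which is where stability of $\M$ is used. Thus the entire content of the theorem lies in the derived-unit condition, and this is precisely where the cofibrancy hypothesis on $\O$ enters.

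First I would record, via Theorem \ref{thm:cok-ker-operad} and diagram \eqref{cokernel-kernel-o}, that both $\coker$ and $\ker$ on the algebra categories are computed entrywise, i.e. they commute with the forgetful functors $U$ down to $(\arrowmppproj)^{\fC}$ and $(\arrowmtensorinj)^{\fC}$. In particular, the underlying map $U\eta$ of the unit is the unit of the underlying adjunction $\coker \dashv \ker$ between $(\arrowmppproj)^{\fC}$ and $(\arrowmtensorinj)^{\fC}$, evaluated at $UA$. Because $\M$ is stable, Hovey's result \cite{hovey-smith} (4.3), applied in each color (a finite or infinite product of Quillen equivalences over $\fC$ being again a Quillen equivalence), shows that this underlying adjunction is itself a Quillen equivalence.

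Next I would invoke the hypothesis: for a cofibrant Smith $\O$-ideal $A$, the object $UA$ is cofibrant in $(\arrowmppproj)^{\fC}$. The underlying map $Uj : \coker UA \to UZ$ is a weak equivalence in $(\arrowmtensorinj)^{\fC}$, since weak equivalences in $\algomtensor$ are detected entrywise, and $UZ$ is fibrant in $(\arrowmtensorinj)^{\fC}$, since fibrations in $\algomtensor$ are detected by $U$ (Def. \ref{def:smitho-model}(2)). Hence on underlying objects the displayed composite is exactly the derived unit of the underlying Quillen equivalence at the cofibrant object $UA$, computed using the fibrant replacement $UZ$ of $\coker UA$. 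As the derived-unit condition is insensitive to the choice of fibrant replacement, this composite is a weak equivalence in $(\arrowmppproj)^{\fC}$; and since weak equivalences of Smith $\O$-ideals are detected entrywise (Def. \ref{def:smitho-model}(1)), the derived unit $A \to \ker Z$ is a weak equivalence in $\algompp$. Together with Prop. \ref{reflects-weq}, this establishes the Quillen equivalence.

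I expect the crux to be the bookkeeping of the previous paragraph. The cofibrancy assumption on $\O$ is exactly what transports the cofibrancy of $A$ down to $UA$, and this is the single point at which the reduction to Hovey's underlying Quillen equivalence could break down; without it, $UA$ need not be cofibrant and the derived-unit comparison would be meaningless. One must also be careful that a fibrant replacement of $\coker A$ taken in $\algomtensor$ yields only an \emph{underlying}-fibrant object weakly equivalent to $\coker UA$, rather than a fibrant replacement computed directly in $(\arrowmtensorinj)^{\fC}$; this is nonetheless all that the criterion requires, precisely because the derived unit does not depend on the chosen fibrant approximation.
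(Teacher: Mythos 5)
Your proposal is correct and follows essentially the same route as the paper: reduce via Prop.~\ref{reflects-weq} and \cite{hovey} (1.3.16) to the derived unit condition, apply the forgetful functor, use the hypothesis that $UA$ is cofibrant in $(\arrowmppproj)^{\fC}$, and conclude from Hovey's underlying Quillen equivalence between $(\arrowmppproj)^{\fC}$ and $(\arrowmtensorinj)^{\fC}$. The one step you cite as a standard fact---that the derived unit is insensitive to the choice of fibrant approximation, needed because $UZ$ is only weakly equivalent to a fibrant object rather than a Hovey-style fibrant replacement of $\coker UA$---is exactly the step the paper carries out explicitly, via a lifting argument producing a weak equivalence $\alpha$ between the two replacements and Ken Brown's Lemma applied to $\ker$; you correctly flagged this subtlety, so there is no gap.
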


\begin{proof}
Using Proposition \ref{reflects-weq} and \cite{hovey} (1.3.16) (or \cite{white-commutative} (4.3) for the semi-model category case), it remains to show that for each cofibrant object $f_X \in \algompp$, the derived unit
\[\nicexy{f_X \ar[r]^-{\eta} & \ker \RO \coker f_X}\]
is a weak equivalence in $\algompp$, where $\RO$ is a fibrant replacement functor in $\algomtensor$.  In other words, we must show that $U\eta$ is a weak equivalence in the model category $(\arrowmppproj)^{\fC}$.

Suppose $R$ is a fibrant replacement functor in $(\arrowmtensorinj)^{\fC}$.  Consider the solid-arrow commutative diagram
\[\nicexy{U \coker f_X \ar@{>->}[d]_-{\sim} \ar[r]^-{\sim} & U\RO \coker f_X \ar@{>>}[d]\\ RU\coker f_X \ar@{>>}[r] \ar@{.>}[ur]|-{\makebox[2\width]{$\alpha$}} & 0}\]
in $(\arrowmtensorinj)^{\fC}$.  Here the left vertical morphism is a trivial cofibration and is a fibrant replacement of $U\coker f_X$.  The top horizontal morphism is a weak equivalence and is $U$ applied to a fibrant replacement of $\coker f_X$.  The other two morphisms are fibrations.  So there is a dotted morphism $\alpha$ that makes the whole diagram commutative.  By the $2$-out-of-$3$ property, $\alpha$ is a weak equivalence between fibrant objects in $(\arrowmtensorinj)^{\fC}$.  Since $\ker : (\arrowmtensorinj)^{\fC} \to (\arrowmppproj)^{\fC}$ is a right Quillen functor, by Ken Brown's Lemma \cite{hovey} (1.1.12) $\ker\alpha$ is a weak equivalence in $(\arrowmppproj)^{\fC}$.

We now have a commutative diagram
\[\nicexy{Uf_X \ar[d]_-{\varepsilon} \ar[rr]^-{U\eta} && U\ker\RO\coker f_X\\
\ker R\coker Uf_X \ar[r]^-{=} & \ker RU\coker f_X \ar[r]^-{\ker\alpha}_-{\sim} & \ker U\RO\coker f_X\ar[u]_-{=}}\]
in $(\arrowmppproj)^{\fC}$, where $\varepsilon$ is the derived unit of $Uf_X$.  To show that $U\eta$ is a weak equivalence, it suffices to show that $\varepsilon$ is a weak equivalence. By assumption $Uf_X$ is a cofibrant object in $(\arrowmppproj)^{\fC}$.  Since $(\coker,\ker)$ is a Quillen equivalence between $(\arrowmppproj)^{\fC}$ and $(\arrowmtensorinj)^{\fC}$, the derived unit $\varepsilon$ is a weak equivalence by \cite{hovey} (1.3.16).
\end{proof}

\begin{example}\label{stable-examples}
Among the model categories in Example \ref{spade-examples}, 
\begin{enumerate}
\item the categories of bounded or unbounded chain complexes over a semi-simple ring that contains the rational numbers, 
\item the stable module category of $k[G]$-modules,
% with the characteristic of $k$ dividing the order of $G$,
\item the categories of symmetric spectra, $G$-equivariant symmetric spectra built on simplicial sets for a finite group $G$, and motivic symmetric spectra, with either the positive or the positive flat stable model structure
\end{enumerate}
satisfy the conclusion of Theorem \ref{smith=map}, for every operad $\O$.  Admissibility is proven in \cite{white-yau} (6.1.1) and \cite{white-yau2} (5.15). Stability is discussed in \cite{hovey} (Chapter 7), \cite{white-yau} (8.3), and \cite{dmitri} (Section 2). All are strongly cofibrantly generated because they are combinatorial model categories \cite{white-yau2} (Sections 11 and 12), \cite{dmitri} (Section 2). So all satisfy the conditions of Theorem \ref{smith=map} except that the condition about cofibrant Smith $\O$-ideals being color-wise cofibrant in $\arrowmppproj$ is more subtle. We will consider this issue in the next two sections, proving this condition for (1) in Corollary \ref{chain-zero} and for (2) in Corollary \ref{stmod-alloperad}.

For classical, equivariant, or motivic symmetric spectra, we must tweak the proof of Theorem \ref{smith=map}. Let $(\arrowmppproj)^\fC$ refer to the projective model structure on the arrow category where $\M$ is the \textit{injective} stable model structure on the relevant category of symmetric spectra. Since the weak equivalences of the injective stable model structure coincide with those of the positive (flat) stable model structure, in the last paragraph of the proof, it is enough to prove that $\epsilon$ is a weak equivalence with respect to the injective stable model structure on spectra. Hence, it suffices for $Uf_X$ to be a cofibrant object in $(\arrowmppproj)^\fC$, which follows from the proof of \cite{white-yau} (8.3.3), using our filtrations and the fact that the cofibrations of the injective stable model structure are the monomorphisms.
%to use the Quillen equivalence between $(\arrowmpp)^\fC$ and $(\arrowminj)^\fC$
\end{example}

% The condition about cofibrant Smith $\O$-ideals being color-wise cofibrant in $\arrowmppproj$ is not clear even for (1) and (2), which have all objects of $\M$ cofibrant. Reason: not all objects of $\arrowmppproj$ are cofibrant.

% the key fact is that all operads in these categories of spectra are injectively cofibrant. For $G$-spectra, we build from $sSet^G$ with the fixed-point model structure, wherein cofibrations = monomorphisms by Prop 2.16 in Stephan Elmendorf Theorem paper. For motivic, you start from simplicial presheaves with injective, then do A1 localization, so still all objects are cofibrant.

% In what follows, when we mention symmetric spectra, we always assume that it is equipped with either the positive stable model structure or the positive flat stable model structure.

We note that we cannot add the injective stable model structure on symmetric spectra to the list in Example \ref{stable-examples} because it is not true that every operad is admissible. A famous obstruction due to Gaunce Lewis prevents the $\Com$ operad from being admissible, for example.

%============================================
\section{Smith Ideals for Commutative and Sigma-Cofibrant Operads}\label{sec:com}

In this section we apply Theorem \ref{smith=map} and consider Smith ideals for the commutative operad and $\Sigmac$-cofibrant operads (Definition \ref{def:sigma-cof}).  In particular, in Corollary \ref{sigmacof-smith=map} we will show that Theorem \ref{smith=map} is applicable to all $\Sigmac$-cofibrant operads.  On the other hand, the commutative operad is usually not $\Sigma$-cofibrant.  However, as we will see in Example \ref{com-spectra}, Theorem \ref{smith=map} is applicable to the commutative operad in symmetric spectra with the positive flat stable model structure.

%===============================
\subsection{Commutative Smith Ideals} \label{subsec:comm}

For the commutative operad, which is entrywise the monoidal unit and whose algebras are commutative monoids, we use the following definition from \cite{white-commutative} (3.4).  The notation $?/{\Sigma_n}$ means taking the $\Sigma_n$-coinvariants.

\begin{definition}
A monoidal model category $\M$ is said to satisfy the \emph{strong commutative monoid axiom}  if, whenever $f: K\to L$ is a (trivial) cofibration, then so is $f^{\boxprod n}/{\Sigma_n}$, where $f^{\boxprod n}$ is the $n$-fold pushout product (which can be viewed as the unique morphism from the colimit $Q_n$ of a punctured $n$-dimensional cube to $L^{\otimes n}$), and the $\Sigma_n$-action is given by permuting the vertices of the cube.
\end{definition}

The following result says that, under suitable conditions, commutative Smith ideals and commutative monoid morphisms have equivalent homotopy theories.

\begin{corollary}\label{com-smith=map}
Suppose $\M$ is a cofibrantly generated stable monoidal model category that satisfies the strong commutative monoid axiom, the monoid axiom, and in which cofibrant $\arrowcompp$-algebras are also underlying cofibrant in $\arrowmppproj$ (this occurs, for example, if the monoidal unit is cofibrant).  Then there is a Quillen equivalence
\[\nicexy{\alg\bigl(\arrowcompp; \arrowmpp\bigr) \ar@<2pt>[r]^-{\mathrm{\coker}} & \alg\bigl(\arrowcomtensor; \arrowmtensor\bigr) \ar@<2pt>[l]^-{\mathrm{\ker}}}\]
in which $\Com$ is the commutative operad in $\M$.
\end{corollary}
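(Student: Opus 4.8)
The plan is to realize this corollary as an instance of Theorem \ref{smith=map}, so that everything reduces to verifying the one cofibrancy hypothesis of that theorem for $\O = \Com$. Since $\Com$ is one-colored, $\fC$ is a single color, the category $\alg\bigl(\arrowcompp; \arrowmpp\bigr)$ of commutative Smith ideals is exactly the category of commutative monoids in the monoidal model category $\arrowmppproj$, and $(\arrowmppproj)^{\fC} = \arrowmppproj$. Thus it suffices to prove that cofibrant commutative monoids in $\arrowmppproj$ are underlying cofibrant in $\arrowmppproj$; Theorem \ref{smith=map} then supplies the asserted Quillen equivalence, together with the two model structures and the Quillen adjunction, for free. (The model structure on commutative Smith ideals from Def. \ref{def:smitho-model} and the transferred one used below agree, since both detect weak equivalences and fibrations entrywise in $\M$.)

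First I would invoke the commutative-monoid machinery of \cite{white-commutative}: in a monoidal model category $\N$ that satisfies the strong commutative monoid axiom and has cofibrant monoidal unit, every cofibrant commutative monoid is underlying cofibrant. Taking $\N = \arrowmppproj$ reduces the task to two claims: (a) the monoidal unit of $\arrowmppproj$ is cofibrant, and (b) $\arrowmppproj$ satisfies the strong commutative monoid axiom. Claim (a) is immediate from Theorem \ref{hovey-projective}: the unit of $\arrowmpp$ is $\varnothing \to \tensorunit = L_1\tensorunit$, and the projective-cofibration criterion shows this object is cofibrant precisely when $\varnothing \to \tensorunit$ is a cofibration in $\M$, i.e., when $\tensorunit$ is cofibrant, which is assumed.

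Claim (b) is the heart of the matter and the step I expect to be the main obstacle. The relevant functor $(-)^{\boxprod n}/\Sigma_n$ is now the $n$-fold pushout-product power formed with respect to the monoidal product $\boxprod$ of $\arrowmpp$, a ``pushout product of pushout products,'' and a head-on verification is awkward: although $\Ev_1$ is strict symmetric monoidal and cocontinuous, so that it commutes with this functor, the domain functor $\Ev_0$ is cocontinuous but not monoidal for $\boxprod$, so the pushout corner map appearing in the projective-cofibration criterion of Theorem \ref{hovey-projective} is opaque. My plan is instead to use the reduction of the strong commutative monoid axiom to generating (trivial) cofibrations from \cite{white-commutative}, so that it is enough to treat the generators $L_0I \cup L_1I$ (resp. $L_0J \cup L_1J$) of $\arrowmppproj$. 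Both $L_0$ and $L_1$, as left adjoints, preserve all colimits, and both preserve the binary monoidal product $\boxprod$ ($L_1$ by \eqref{lev}, $L_0$ by the direct check $L_0X \boxprod L_0Y = L_0(X \otimes Y)$); they therefore commute with pushout-product powers and with the $\Sigma_n$-quotient, giving $(L_\epsilon i)^{\boxprod n}/\Sigma_n = L_\epsilon\bigl(i^{\boxprod n}/\Sigma_n\bigr)$ for $\epsilon \in \{0,1\}$. The failure of $L_0$ to preserve the unit of $\arrowmpp$ is harmless, since pushout-product powers never involve the unit.

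Since $i^{\boxprod n}/\Sigma_n$ is a (trivial) cofibration in $\M$ by the strong commutative monoid axiom there, and since $L_0, L_1 : \M \to \arrowmppproj$ are left Quillen functors (their images of $I$ and $J$ are precisely the generators of $\arrowmppproj$), the maps $L_\epsilon\bigl(i^{\boxprod n}/\Sigma_n\bigr)$ are (trivial) cofibrations in $\arrowmppproj$. This proves claim (b), hence the cofibrancy hypothesis of Theorem \ref{smith=map}, and the corollary follows. The only genuinely technical point is the reduction in (b) to generating cofibrations, which rests on the filtration of symmetric powers of a pushout; once that is granted, the remaining steps are formal consequences of the adjunctions in \eqref{lev} and the hypotheses on $\M$.
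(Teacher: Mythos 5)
Your proposal is correct and takes essentially the same route as the paper: the paper's entire proof is to cite \cite{white-commutative} (3.6 and 5.14) for precisely the facts you verify, namely that cofibrant commutative monoids are underlying cofibrant in a cofibrantly generated monoidal model category satisfying the strong commutative monoid axiom with cofibrant unit, applied to $\arrowmppproj$, which inherits both hypotheses from $\M$. Your hands-on verification of the inherited strong commutative monoid axiom---reduction to the generators $L_0I \cup L_1I$ and $L_0J \cup L_1J$ via the filtration argument, plus the (non-unital) strict monoidality and cocontinuity of $L_0$ and $L_1$---is a sound reconstruction of the content the paper delegates to the citation, so Theorem \ref{smith=map} applies exactly as in the paper's proof.
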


\begin{proof}
First, \cite{white-commutative} (5.12 and 5.14) ensures that $\arrowmtensor$ and $\arrowmpp$ satisfy the strong commutative monoid axiom, and \cite{hovey-smith} (2.2 and 3.2) (also Theorems \ref{injective-model} and \ref{hovey-projective}) ensures that they satisfy the monoid axiom. Hence, by \cite{white-commutative} (3.2), $\alg\bigl(\arrowcompp; \arrowmpp\bigr)$ and $\alg\bigl(\arrowcomtensor; \arrowmtensor\bigr)$ carry transferred model structures.

For the commutative operad, it is proved in \cite{white-commutative} (3.6 and 5.14) that, with the strong commutative monoid axiom and a cofibrant monoidal unit, cofibrant $\arrowcompp$-algebras are also underlying cofibrant in $\arrowmppproj$.   So Theorem \ref{smith=map} applies. 
\end{proof}

\begin{example}[Commutative Smith Ideals in Symmetric Spectra]\label{com-spectra}
Example \ref{stable-examples} shows that the category of symmetric spectra with the positive flat stable model structure satisfies the hypotheses in Theorem \ref{smith=map}. It also satisfies the strong commutative monoid axiom \cite{white-commutative} (5.7) and the monoid axiom \cite{ss}. While the monoidal unit is not cofibrant, nevertheless, \cite{white-commutative} (5.15) shows that cofibrant commutative Smith ideals forget to cofibrant objects of $\arrowmpp$.  Therefore, Corollary \ref{com-smith=map} applies to the commutative operad $\Com$ in symmetric spectra with the positive flat stable model structure.
\end{example}

\begin{example}[Commutative Smith Ideals in Algebraic Settings]\label{com-algebra}
Let $R$ be a commutative ring containing the ring of rational numbers $\mathbb{Q}$. Corollary \ref{chain-zero} shows that the category of (bounded or unbounded) chain complexes of $R$-modules satisfies the conditions of Theorem \ref{smith=map}. They also satisfy the strong commutative monoid axiom and the monoid axiom \cite{white-commutative} (5.1). Hence, Corollary \ref{com-smith=map} applies, to give a homotopy theory of ideals of CDGAs. The same is true of the stable module category of $R = k[G]$ where $k$ is a field and $G$ is a finite group, using Corollary \ref{stmod-alloperad}. The result is a homotopy theory of ideals of commutative $R$-algebras.
\end{example}

\begin{example}[Commutative Smith Ideals in (Equivariant) Orthogonal/Symmetric Spectra]\label{com-spectra2}
Let $G$ be a compact Lie group. The positive flat stable model structure on $G$-equivariant orthogonal spectra satisfies the strong commutative monoid axiom \cite{white-commutative} (5.10), the monoid axiom \cite{white-localization} (Section 8), and has the property that cofibrant commutative Smith ideals forget to cofibrant objects of $\arrowmpp$ \cite{white-commutative} (5.15). 
% monoid axiom is Stolz 2.2.50 for positive on G-orthogonal spectra and 2.3.27 for positive flat.
The same is true for Hausmann's $G$-symmetric spectra built on either simplicial sets or topological spaces for a finite group $G$ by \cite{hausmann} (6.4, 6.16, 6.22), and for
% 6.4 = monoid axiom, 6.16 = convenience, 6.22 = strong comm mon axiom
Schwede's positive flat model structure for global equivariant homotopy theory (where commutative monoids are ultra-commutative ring spectra) \cite{schwede-global} (IV.3.28, V.4.1, V.4.3). 
% IV.3.28 is monoid axiom, V.4.1 is strong comm mon axiom, and V.4.3 (p399) is cofibrant things forgetting. We use that the sphere spectrum S is "flat" thanks to p257 and Stolz thesis.
Hence, Corollary \ref{com-smith=map} applies in all three settings.
\end{example}

Of course, taking $G$ trivial in Example \ref{com-spectra2}, one obtains that Corollary \ref{com-smith=map} applies to orthogonal spectra with the positive flat stable model structure \cite{white-localization} (Section 8).

\begin{remark} \label{remark:semi-com}
If, in Corollary \ref{com-smith=map}, $\M$ fails to satisfy the monoid axiom, then we still have semi-model structures on $\alg\bigl(\arrowcompp; \arrowmpp\bigr)$ and $\alg\bigl(\arrowcomtensor; \arrowmtensor\bigr)$ by \cite{white-commutative} (3.8). In this case, Theorem \ref{smith=map} still applies, as long as cofibrant $\arrowcompp$-algebras are also underlying cofibrant in $\arrowmppproj$ (e.g., if the monoidal unit is cofibrant, by \cite{white-commutative} (3.6)).
\end{remark}

%===================================================
\subsection{Smith Ideals for Sigma-Cofibrant Operads}
For a cofibrantly generated model category $\M$ and a small category $\D$, recall that the diagram category $\M^{\D}$ inherits a \emph{projective model structure} with weak equivalences and fibrations defined entrywise in $\M$ \cite{hirschhorn} (11.6.1).  We use this below when $\D = \Sigmacopc$ is the groupoid in Definition \ref{def:profiles}. In this case, the category $\M^{\D}$ is the category of $\fC$-colored symmetric sequences.

\begin{definition} \label{def:sigma-cof}
For a cofibrantly generated model category $\M$, a $\fC$-colored operad in $\M$ is said to be \emph{$\Sigmac$-cofibrant} if its underlying $\fC$-colored symmetric sequence is cofibrant.  If $\fC$ is the one-point set, then we say \emph{$\Sigma$-cofibrant} instead of $\Sigma_{\{*\}}$-cofibrant
\end{definition}

\begin{proposition}\label{sigmac-cofibrant-arrrowcat}
Suppose $\M$ is a cofibrantly generated model category, and $\D$ is a small category.  If $X \in \M^{\D}$ is cofibrant, then $L_1X \in (\arrowmppproj)^{\D}$ and $L_0X \in (\arrowmtensorinj)^{\D}$ are cofibrant.  In particular, if $\O$ is a $\Sigmac$-cofibrant $\fC$-colored operad in $\M$, then $\arrowopp = L_1\O$ is a $\Sigmac$-cofibrant $\fC$-colored operad in $\arrowmppproj$ and $\arrowotensor$ is a $\Sigmac$-cofibrant $\fC$-colored operad in $\arrowmtensorinj$.
\end{proposition}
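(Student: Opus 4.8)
The plan is to reduce the whole statement to the single observation that the level functors $L_0 : \M \to \arrowmtensorinj$ and $L_1 : \M \to \arrowmppproj$ are left Quillen, and then to invoke the general fact that a left Quillen functor induces a left Quillen functor on diagram categories with the projective model structure, hence preserves cofibrant objects. First I would check that the adjunctions $L_0 \dashv \Ev_0$ and $L_1 \dashv \Ev_1$ from \eqref{lev} are Quillen adjunctions. For $L_0$ this is immediate: for a map $f$ in $\M$ the square $L_0 f$ satisfies $\Ev_0(L_0 f) = \Ev_1(L_0 f) = f$, so by the entrywise description of cofibrations in the injective model structure (Theorem \ref{injective-model}), $L_0$ carries (trivial) cofibrations to (trivial) cofibrations.

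For $L_1$ the verification uses the corner-map criterion of Theorem \ref{hovey-projective}. For $f : X \to Y$, the square $L_1 f$ has top entry $\Ev_0(L_1 f) = \id_{\varnothing}$, bottom entry $\Ev_1(L_1 f) = f$, and pushout corner map $X \coprodover{\varnothing} \varnothing \to Y$. Using the canonical isomorphism $X \coprodover{\varnothing} \varnothing \cong X$, this corner map is identified with $f$ itself. Since $\id_{\varnothing}$ is always a trivial cofibration, Theorem \ref{hovey-projective} shows that $L_1 f$ is a (trivial) cofibration in $\arrowmppproj$ whenever $f$ is one in $\M$. Hence both $L_0$ and $L_1$ are left Quillen.

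Next I would pass to $\D$-diagrams. Both $\M^{\D}$ and the targets $(\arrowmtensorinj)^{\D}$ and $(\arrowmppproj)^{\D}$ carry projective model structures, in which weak equivalences and fibrations are detected entrywise, and the adjunctions $L_0 \dashv \Ev_0$ and $L_1 \dashv \Ev_1$ induce adjunctions on $\D$-diagrams by applying the functors objectwise. Because $\Ev_0$ and $\Ev_1$ are right Quillen and fibrations in a projective diagram model structure are checked entrywise, the induced right adjoints preserve (trivial) fibrations; therefore the induced functors $L_0^{\D}$ and $L_1^{\D}$ are left Quillen and preserve cofibrant objects. Applying this to a cofibrant $X \in \M^{\D}$ yields the first assertion. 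For the final clause I would take $\D = \Sigmacopc$: by definition $\O$ is $\Sigmac$-cofibrant precisely when its underlying object in $\M^{\Sigmacopc}$ is cofibrant, so the first part shows that the underlying symmetric sequence of $\arrowopp = L_1\O$ is cofibrant in $(\arrowmppproj)^{\Sigmacopc}$, which is exactly the statement that $\arrowopp$ is $\Sigmac$-cofibrant in $\arrowmppproj$.

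The only genuinely computational point I expect is the identification of the pushout corner map of $L_1 f$ with $f$ via $X \coprodover{\varnothing} \varnothing \cong X$; everything else is formal once the two level functors are recognized as left Quillen. The single subtlety worth tracking is the asymmetry of the two sides: the tensor side uses the injective model structure, where the corner-map condition is vacuous, while the pushout-product side uses the projective one, so the corner-map check is needed only for $L_1$.
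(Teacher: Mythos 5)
Your proposal is correct and follows the same route as the paper: recognize $L_0 \dashv \Ev_0$ and $L_1 \dashv \Ev_1$ as Quillen adjunctions, lift them to the projective model structures on $\D$-diagram categories, and conclude that the left Quillen functors preserve cofibrant objects, with the operad case being the instance $\D = \Sigmacopc$. The only difference is that you verify by hand (via the entrywise cofibrations in $\arrowminj$ and the pushout corner map $X \coprodover{\varnothing} \varnothing \cong X$ in $\arrowmproj$) what the paper handles by citation, namely the Quillen adjunction claim and the diagram-category lifting via Hirschhorn (11.6.5(1)).
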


\begin{proof}
The Quillen adjunction $L_1 : \M \adjoint \arrowmppproj : \Ev_1$ lifts to a Quillen adjunction of $\D$-diagram categories
\[\nicexy{\M^{\D} \ar@<2pt>[r]^-{L_1} & (\arrowmppproj)^{\D} \ar@<2pt>[l]^-{\Ev_1}}\]
by \cite{hirschhorn} (11.6.5(1)), and similarly for $(L_0,\Ev_0)$. If $X\in \M^{\D}$ is cofibrant, then $L_1X$ and $L_0X$ are cofibrant since $L_1$ and $L_0$ are left Quillen functors.
%For the statement about $\arrowmtensorinj$, observe that cofibrations are defined entrywise.
\end{proof}

The following result says that, under suitable conditions, for a $\Sigmac$-cofibrant $\fC$-colored operad $\O$, Smith $\O$-ideals and $\O$-algebra morphisms have equivalent homotopy theories.

\begin{corollary}\label{sigmacof-smith=map}
Suppose $\M$ is as in Theorem \ref{smith=map}, and $\O$ is a $\Sigmac$-cofibrant $\fC$-colored operad in $\M$.  Then $\algompp$ and $\algomtensor$ have transferred semi-model structures where cofibrant $\arrowopp$-algebras are also underlying cofibrant in $(\arrowmppproj)^{\fC}$. Hence, there is a Quillen equivalence
\[\nicexy{\algompp \ar@<3pt>[r]^-{\coker} & \algomtensor. \ar@<1pt>[l]^-{\ker}}\]
\end{corollary}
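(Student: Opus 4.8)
The corollary has two assertions, and the second (the Quillen equivalence) follows from the first (underlying cofibrancy) by a direct appeal to Theorem \ref{smith=map}. So the plan is to prove only that cofibrant $\arrowopp$-algebras are underlying cofibrant in $(\arrowmppproj)^{\fC}$, and then quote Theorem \ref{smith=map} verbatim. The first move is to push the hypothesis into the arrow category: by Proposition \ref{sigmac-cofibrant-arrrowcat}, $\arrowopp = L_1\O$ is a $\Sigmac$-cofibrant $\fC$-colored operad in the monoidal model category $\arrowmppproj$. This reduces the claim to a statement internal to $\arrowmppproj$ with no remaining reference to the Smith-ideal formalism, namely that cofibrant algebras over a $\Sigmac$-cofibrant operad are underlying cofibrant.

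To establish that internal statement I would use the cellular description of cofibrant algebras from Proposition \ref{cgoverarrowm}. A cofibrant $\arrowopp$-algebra is a retract of a transfinite composite of pushouts of free maps $\arrowopp \circ j$, where $j$ runs over (coproducts of) generating cofibrations of $(\arrowmppproj)^{\fC}$. Since the forgetful functor to $(\arrowmppproj)^{\fC}$ preserves filtered colimits and retracts, it is enough to show that each such pushout carries a cofibration to an underlying cofibration. This is handled by the usual filtration of the underlying object of a free-algebra pushout, whose associated subquotients are built from the entries of $\arrowopp$ smashed, over the relevant symmetric groups, with iterated pushout-product powers of $j$ --- the same punctured-cube colimit that appears in Proposition \ref{smith-unravel}. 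Because $\arrowmppproj$ satisfies the pushout product axiom (the Remark after Theorem \ref{hovey-projective}, via \cite{white-yau-arrowcat}) and $\arrowopp$ is $\Sigmac$-cofibrant, each subquotient map is a cofibration in $\arrowmppproj$, so the underlying map is a cofibration. This is exactly the $\Sigmac$-cofibrant case of the underlying-cofibrancy results of \cite{white-yau}, applied with $\M$ replaced by $\arrowmppproj$, so in the write-up I would invoke that result directly.

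The main obstacle is the bookkeeping needed to invoke the cited result cleanly: one must verify that $\arrowmppproj$ genuinely meets its standing hypotheses, and that the model structure on $\arrowopp$-algebras used there coincides with the one fixed in Definition \ref{def:smitho-model}. The monoidal model structure on $\arrowmppproj$ is the content of the Remark after Theorem \ref{hovey-projective}, and its strong cofibrant generation follows from Proposition \ref{arrrowm-strongly} together with the smallness hypotheses on $\M$. For the compatibility of model structures, I would argue as in Proposition \ref{cgoverarrowm}: a model structure is determined by its weak equivalences and fibrations, and both candidate structures on $\algompp$ have these classes defined entrywise in $\M$, so they agree and hence have the same cofibrant objects. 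With these checks in place, the first assertion holds and Theorem \ref{smith=map} yields the Quillen equivalence.
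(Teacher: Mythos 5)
Your proposal is correct and follows essentially the same route as the paper: transfer $\Sigmac$-cofibrancy to $\arrowmppproj$ via Proposition \ref{sigmac-cofibrant-arrrowcat}, invoke the underlying-cofibrancy result of \cite{white-yau} (6.3.1(2)) for $\Sigmac$-cofibrant operads with $\M$ replaced by $\arrowmppproj$, and conclude by Theorem \ref{smith=map}. Your additional sketch of the cellular/filtration argument behind the cited result and the hypothesis-verification bookkeeping are consistent with, though more detailed than, the paper's four-line proof.
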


\begin{proof}
The arrow categories $\arrowmppproj$ and $\arrowmtensorinj$ are cofibrantly generated monoidal model categories by Theorems \ref{injective-model} and \ref{hovey-projective}.  By Proposition \ref{sigmac-cofibrant-arrrowcat}, the $\fC$-colored operads $\arrowopp$ in $\arrowmppproj$ and $\arrowotensor$ in $\arrowmtensorinj$ are $\Sigmac$-cofibrant.  Theorem 6.3.1 in \cite{white-yau}, applied to $\arrowmppproj$ and $\arrowmtensorinj$, now gives the transferred semi-model structures and says that every cofibrant $\arrowopp$-algebra is underlying cofibrant in ($\arrowmppproj)^{\fC}$.  So Theorem \ref{smith=map} applies.
\end{proof}

The following provides one source of applications of Corollary \ref{sigmacof-smith=map}, and answers a question Pavel Safranov asked the first author. This result generalizes \cite{white-commutative} (5.1) and \cite{white-yau} (8.1), as it applies in particular to fields of characteristic zero.

\begin{corollary}\label{chain-zero}
Suppose $R$ is a commutative ring with unit and $\M$ is the category of bounded or unbounded chain complexes of $R$-modules, with the projective model structure. The following are equivalent:
\begin{enumerate}
\item $R$ is a semi-simple ring containing the rational numbers $\mathbb{Q}$.
\item Every symmetric sequence is projectively cofibrant.
\end{enumerate}
In particular, for such rings $R$, every $\fC$-colored operad in $\M$ is $\Sigmac$-cofibrant, so Corollary \ref{sigmacof-smith=map} is applicable for all colored operads in $\M$. If $R$ contains $\mathbb{Q}$ (but is not necessarily semi-simple) then every entrywise cofibrant $\fC$-colored operad in $\M$ is $\Sigmac$-cofibrant and admissible.
\end{corollary}

\begin{proof}
Assume (1). Maschke's Theorem \cite{milies-book} (3.4.7) guarantees that each group ring $R[\Sigma_n]$ is semi-simple (since $\frac{1}{n!}$ exists in $R$, making $n!$ invertible). This means every module $M$ over $R[\Sigma_n]$ is projective. In particular, $M$ is a direct summand of a module induced from the trivial subgroup, and has a free $\Sigma_n$-action. Hence, (2) follows.

Conversely, if (2) is true, then it implies that, for every $n$, every module in $R[\Sigma_n]$ is projective. This means each $R[\Sigma_n]$ is a semi-simple ring. By \cite{milies-book} (3.4.7), this implies that $R$ is semi-simple and $n!$ is invertible in $R$ for every $n$. It follows that $\mathbb{Q}$ is contained in $R$.

For such $R$, the projective model structure on (bounded or unbounded) chain complexes of $R$-modules has every object cofibrant (so, automatically, cofibrant operad-algebras forget to cofibrant chain complexes). Hence, any $\fC$-colored operad is entrywise cofibrant, and hence $\Sigmac$-cofibrant. Furthermore, Theorem \ref{spade} implies that all operads are admissible, since every $X\in \M^{\Sigma_n^{op}}$ is $\Sigma_n$-projectively cofibrant. 

If $R$ contains $\mathbb{Q}$ but is not semi-simple, then there can be non-projective $R$-modules, but the argument of \cite{milies-book} (3.4.7) shows that an $R[\Sigma_n]$-module that is projective as an $R$-module is projective as a $R[\Sigma_n]$-module. It follows that Corollary \ref{sigmacof-smith=map} holds for entrywise cofibrant operads, including the operad $Com$. Indeed, all operads are admissible thanks to Theorem \ref{spade}, since for any trivial cofibration $f$ and any $X \in \M^{\Sigma_n^{op}}$, maps of the form $X\otimes_{\Sigma_n} f^{\boxprod n}$ are trivial $h$-cofibrations and this class of morphisms is closed under pushout and transfinite composition \cite{white-localization} (Section 8).
%every symmetric sequence is cofibrant.  Specifically, Maschke's theorem says that, for every finite group $G$, every $k[G]$-module is projective, since $k[G]$ is a semi-simple ring. This implies every $k[G]$-module has a free $G$-action, and hence every object of $\M^{\Sigma_n}$ is projectively cofibrant for every $n$ 
\end{proof}

\begin{example}\label{applicable-operads}
Suppose $\M$ is as in Theorem \ref{smith=map}, that is, cofibrantly generated, stable, monoidal, and with (co)domains of $I\cup J$ small. Many examples of such $\M$ are provided in Example \ref{stable-examples}, Example \ref{stable-examples2}, and in our papers \cite{white-commutative, white-yau, gutierrez-white-equivariant, white-yau3, white-yau-arrowcat, white-yau2, white-yau-co, hovey-white, white-localization}.  Here are some examples of $\Sigma$-cofibrant operads, for which Corollary \ref{sigmacof-smith=map} is applicable.
\begin{description}
\item[Smith Ideals] The associative operad $\As$, which has $\As(n) = \coprod_{\Sigma_n} \tensorunit$ as the $n$th entry and which has monoids as algebras, is $\Sigma$-cofibrant.  In this case, Corollary \ref{sigmacof-smith=map} is Hovey's Corollary 4.4 (1) in \cite{hovey-smith}.
\item[Smith $A_{\infty}$-Ideals] Any $A_{\infty}$-operad, defined as a $\Sigma$-cofibrant resolution of $\As$, is $\Sigma$-cofibrant.  In this case, Corollary \ref{sigmacof-smith=map} says that Smith $A_{\infty}$-ideals and $A_{\infty}$-algebra morphisms have equivalent homotopy theories. For instance, one can take the standard differential graded $A_{\infty}$-operad \cite{markl} and, for symmetric spectra, the Stasheff associahedra operad \cite{stasheff}.
\item[Smith $E_{\infty}$-Ideals] Any $E_{\infty}$-operad, defined as a $\Sigma$-cofibrant resolution of the commutative operad $\Com$, is $\Sigma$-cofibrant.  In this case, Corollary \ref{sigmacof-smith=map} says that Smith $E_{\infty}$-ideals and $E_{\infty}$-algebra morphisms have equivalent homotopy theories.  For example, for symmetric spectra, one can take the Barratt-Eccles $E_{\infty}$-operad $E\Sigma_*$ \cite{be}.  An elementary discussion of the Barratt-Eccles operad is in \cite{cerberusIII} (Section 11.4).
\item[Smith $E_n$-Ideals] For each $n \geq 1$, the little $n$-cubes operad $\C_n$ \cite{boardman-vogt,may72} is $\Sigma$-cofibrant and is an $E_n$-operad by definition \cite{fresse-gt} (4.1.13). In this case, with $\M$ being symmetric spectra with the positive (flat) stable model structure, Corollary \ref{sigmacof-smith=map} says that Smith $\C_n$-ideals and $\C_n$-algebra morphisms have equivalent homotopy theories.  One may also use other $\Sigma$-cofibrant $E_n$-operads \cite{fie}, such as the Fulton-MacPherson operad (\cite{getzler-jones} and \cite{fresse-gt} (4.3)), which is actually a cofibrant $E_n$-operad.  An elementary discussion of a categorical $E_n$-operad is in \cite{cerberusIII} (Chapter 13).
\end{description}
\end{example}

\begin{example}\label{stable-examples2}
The power of restricting attention to the class of $\Sigmac$-cofibrant colored operads is that Theorem \ref{smith=map} holds for a larger class of model categories. In particular, the following model categories satisfy the conditions of Theorem \ref{smith=map} for the class of $\Sigmac$-cofibrant colored operads, as do all examples listed in Subsection \ref{subsec:comm}.
\begin{enumerate}
\item $S$-modules with the model structure from \cite{ekmm}.
\item The projective, injective, positive, or positive flat stable model structures \cite{white-localization} (8.3, 8.5) on symmetric spectra, $G$-equivariant orthogonal spectra (for a compact Lie group $G$), and motivic symmetric spectra.
\item Mandell's model structure on $G$-equivariant symmetric spectra built on simplicial sets or topological spaces, where $G$ is a finite group in the former case and a compact Lie group in the latter case \cite{mandell-equivariant}.
% Theorem 1: monoidal, Theorem 2: stable, Appendix A for topological case
\item Model structures for (equivariant) stable homotopy theory based on Lydakis's theory of enriched functors \cite{dro}. For example, this includes the model category of $G$-enriched functors from finite $G$-simplicial sets to $G$-simplicial sets, where $G$ is a finite group, from \cite{dro} (Theorem 2).
% Theorem 1: monoidal and stable. Theorem 2: for equivariant
\item Any model structure $\M$ on symmetric spectra built on ($\cat C, G)$ where $\cat C$ is a model category and $G$ is an endofunctor, as long as $\M$ is an operadically cofibrantly generated, monoidal, stable model structure. For example, taking $\cat C$ to be the canonical model structure on small categories, and using the suspension discussed in \cite{white-yau2} (Section 13), one obtains by \cite{hovey-spectra} (7.3) a combinatorial, stable, monoidal model structure on symmetric spectra of small categories with applications to Goodwillie calculus. Using \cite{dmitri} (Section 2) one may obtain positive and positive flat variants. Another example is taking $\cat C$ to be the $I$-spaces or $J$-spaces of Sagave and Schlichtkrull, and building projective, positive, or positive flat spectra on them as in \cite{dmitri} (Section 2).
\item The projective model structure on bounded or unbounded chain complexes over a commutative ring $R$ \cite{white-yau2} (Section 11).
\item The stable module category of $k[G]$ where $G$ is a finite group and $k$ is a principal ideal domain \cite{white-yau2} (Section 12).
\end{enumerate}
All of these examples are stable monoidal model categories, so Corollary \ref{sigmacof-smith=map} applies, once the requisite smallness hypothesis for the generating (trivial) cofibrations is checked. Symmetric spectra, motivic symmetric spectra, examples (6) and (7), and Mandell's model (3) of $G$-equivariant symmetric spectra built on simplicial sets are all combinatorial, as is the model structure on enriched functors (4) in simplicial contexts. Symmetric spectra as in (5) are combinatorial if $\cat C$ is combinatorial. $S$-modules, $G$-equivariant orthogonal spectra, Mandell's model (3) in topological contexts, and symmetric spectra built on topological spaces (another example of (5)) are operadically cofibrantly generated just as in Example \ref{example:top-operadically-cof-gen}, since they are built from compactly generated spaces.  We recall that spaces are small relative to inclusions, and the morphisms in $(\O \circ (I\cup J))$-cell are inclusions \cite{white-yau2} (5.10).

%$\Sigma$-cofibrant colored operads are semi-admissible \cite{white-yau} (6.3.1). For positive and positive flat model structures on symmetric spectra and $G$-equivariant orthogonal spectra, all operads are admissible \cite{white-yau} (8.3.1), \cite{white-yau2} (5.15). The same is true for $S$-modules \cite{goerss-hopkins} (1.5). The first is a combinatorial model category and hence strongly cofibrantly generated. The latter two are operadically cofibrantly generated because they are built from compactly generated spaces, spaces are small relative to inclusions, and the morphisms in $(\O \circ (I\cup J))$-cell are inclusions \cite{white-yau2} (5.10). Lastly, in any monoidal model category, admissible $\Sigmac$-cofibrant operads have the property that cofibrant algebras forget to cofibrant objects \cite{white-yau} (6.1.5).
\end{example}

%====================================================
\section{Smith Ideals for Entrywise Cofibrant Operads}\label{sec:entywise}

In this section we apply Theorem \ref{smith=map} to operads that are not necessarily $\Sigmac$-cofibrant.  To do that, we need to redistribute some of the cofibrancy assumptions---that cofibrant Smith $\O$-ideals are underlying cofibrant in the arrow category---from the colored operad to the underlying category.  We will show in Theorem \ref{underlying-cofibrant} that Theorem \ref{smith=map} is applicable to all entrywise cofibrant operads, provided that $\M$ satisfies the cofibrancy condition $(\heartsuit)$ below.  This implies that, over the stable module category \cite{hovey} (2.2), Theorem \ref{smith=map} is always applicable.

%==================================
\subsection{Cofibrancy Assumptions}

\begin{definition}\label{heart-conditions}
Suppose $\M$ is a cofibrantly generated monoidal model category.  Define the following conditions in $\M$.
\begin{description}
\item[$(\heartsuit)$] For each $n \geq 1$ and each morphism $f \in \Msigmanop$ that is an underlying cofibration between cofibrant objects in $\M$, the function
\[f \boxprodover{\Sigma_n} (-) : \M^{\Sigma_n} \to \M\]
takes each morphism in $\M^{\Sigma_n}$ that is an underlying cofibration in $\M$ to a cofibration in $\M$.  More explicitly, this condition asks that, for each morphism $g \in \M^{\Sigma_n}$ that is an underlying cofibration in $\M$, the morphism
\[f \boxprodover{\Sigma_n} g = (f \boxprod g)/\Sigma_n\]
is a cofibration in $\M$.
\item[$\clubcof$] For each $n \geq 1$ and each object $X \in \Msigmanop$ that is underlying cofibrant in $\M$, the function
\[X \tensorover{\Sigma_n} (-)^{\boxprod n} : \M \to \M\]
preserves cofibrations.
\item[$\clubtcof$] For each $n \geq 1$ and each object $X \in \Msigmanop$ that is underlying cofibrant in $\M$, the function
\[X \tensorover{\Sigma_n} (-)^{\boxprod n} : \M \to \M\]
preserves trivial cofibrations.
\end{description}
\end{definition}

\begin{remark}
The condition $(\heartsuit)$ implies $\clubcof$, since $(\varnothing \to X) \boxprod (-) = X \otimes (-)$.  The condition $\clubcof$ was introduced in \cite{white-yau} (6.2.1), where the authors proved that, if $\M$ satisfies $\clubcof$ and $\clubtcof$, then there exist transferred semi-model structures on algebras over entrywise cofibrant (but not necessarily $\Sigmac$-cofibrant) colored operads. It is, therefore, no surprise that we consider $\clubcof$ and its variant $(\heartsuit)$ here in order to use Theorem \ref{smith=map} for operads that are not necessarily $\Sigmac$-cofibrant. Of course, $(\spadesuit)$ implies $\clubtcof$, so $\clubtcof$ holds in all the model categories in Example \ref{spade-examples}. 
\end{remark}

\begin{proposition}
The condition $(\heartsuit)$ holds in the following categories:
\begin{enumerate}
\item Simplicial sets with either the Quillen model structure or the Joyal model structure \cite{lurie}, where cofibrations are the monomorphisms;
\item Bounded or unbounded chain complexes over a field $k$ of characteristic zero, where cofibrations are degreewise monomorphisms \cite{hovey} (2.3.9) since every monomorphism of $k$-modules splits, and every chain complex is cofibrant (see Corollary \ref{chain-zero}); 
\item Small categories with the folk model structure where cofibrations are injective on objects \cite{rezk}; 
\item The stable module category of $k[G]$-modules with the characteristic of $k$ dividing the order of $G$, where cofibrations are injections \cite{hovey} (2.2.12); and
\item The injective model structure on symmetric spectra, $G$-equivariant symmetric spectra, and motivic symmetric spectra, where the cofibrations are the monomorphisms \cite{hovey-spectra}.
\end{enumerate}
\end{proposition}

\begin{proof}
For simplicial sets with either model structure, a cofibration is precisely an injection, and the pushout product of two injections is again an injection.  Dividing an injection by a $\Sigma_n$-action is still an injection.  The other cases are proved similarly.
\end{proof}

\begin{proposition}
If $(\heartsuit)$ holds in $\M$, then it also holds in any left Bousfield localization of $\M$.
\end{proposition}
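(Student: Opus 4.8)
The plan is to reduce everything to the single structural fact that a left Bousfield localization changes neither the underlying symmetric monoidal category nor the class of cofibrations. Write $\M'$ for a left Bousfield localization of $\M$ (presupposing, as the statement implicitly does, that $\M'$ is again a cofibrantly generated monoidal model category, so that $(\heartsuit)$ is meaningful). By \cite{hirschhorn} the model category $\M'$ has the same underlying category as $\M$, the same symmetric monoidal product $\otimes$, and exactly the same cofibrations; only the class of weak equivalences is enlarged (and the fibrations correspondingly shrunk). In particular $\M$ and $\M'$ have the same cofibrant objects, since an object $X$ is cofibrant precisely when $\varnothing \to X$ is a cofibration.

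Next I would observe that the condition $(\heartsuit)$ is formulated purely in terms of cofibrations, cofibrant objects, and the pushout product $\boxprod$. Unwinding the statement: the hypothesis requires a map $f \in \Msigmanop$ whose underlying map in $\M$ is a cofibration between cofibrant objects, and the conclusion requires that $f \boxprodover{\Sigma_n}(-)$ send every map in $\M^{\Sigma_n}$ whose underlying map is a cofibration to a cofibration in $\M$. The operation $\boxprodover{\Sigma_n}$ is assembled from $\otimes$, pushouts, and the quotient by the $\Sigma_n$-action, all of which are computed in the underlying category and are therefore identical in $\M$ and $\M'$.

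Hence each ingredient entering the hypothesis and the conclusion of $(\heartsuit)$ --- being a cofibration in $\M$, being a cofibrant object, and being an underlying cofibration in the equivariant diagram categories $\Msigmanop$ and $\M^{\Sigma_n}$ --- means exactly the same thing in $\M'$ as in $\M$, because in every case it reduces to the coincident notion of cofibration in the common underlying category. Consequently $(\heartsuit)$ holds in $\M'$ if and only if it holds in $\M$, and in particular it is inherited by the localization. The only point needing any care is verifying that ``underlying cofibration'' in the diagram categories $\Msigmanop$ and $\M^{\Sigma_n}$ is computed by forgetting the equivariance down to $\M$; once that is noted there is no genuine obstacle, since the proposition carries no content beyond the invariance of cofibrations under left Bousfield localization.
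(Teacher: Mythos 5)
Your argument is correct and is essentially the paper's own proof: the paper likewise observes that $(\heartsuit)$ refers only to cofibrations, which are unchanged under left Bousfield localization. Your version simply spells out in detail (same underlying category, same monoidal product, same cofibrant objects, $\boxprodover{\Sigma_n}$ computed in the underlying category) what the paper compresses into one sentence.
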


\begin{proof}
The condition $(\heartsuit)$ only refers to cofibrations, which remain the same in any left Bousfield localization.
\end{proof}

The next observation is the key that connects the cofibrancy condition $(\heartsuit)$ in $\M$ to the arrow category.

\begin{theorem}\label{arrowm-club}
Suppose $\M$ is a cofibrantly generated monoidal model category satisfying $(\heartsuit)$.  Then the arrow category $\arrowmppproj$ satisfies $\clubcof$.
\end{theorem}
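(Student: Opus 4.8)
The plan is to verify the condition $\clubcof$ for $\arrowmppproj$ directly from its definition, pushing everything down to the two evaluation functors $\Ev_0,\Ev_1 \colon \arrowmpp \to \M$ and invoking the characterization of projective cofibrations in Theorem \ref{hovey-projective}. Fix $n \geq 1$, an object $X \in (\arrowmpp)^{\sigmaop_n}$ that is underlying cofibrant in $\arrowmppproj$, and a cofibration $\alpha$ in $\arrowmppproj$; write $\beta = X \tensorover{\Sigma_n} \alpha^{\boxprod n}$ for the image of $\alpha$ under $X \tensorover{\Sigma_n} (-)^{\boxprod n}$, all formed in $\arrowmpp$. The first observation is that ``$X$ underlying cofibrant in $\arrowmppproj$'' unravels, via Theorem \ref{hovey-projective}(1) applied to $\varnothing \to X$, to the statement that the map $\Ev_0 X \to \Ev_1 X$ underlying $X$ is a cofibration between cofibrant objects of $\M$ --- which is exactly the hypothesis placed on $f$ in $(\heartsuit)$. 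Next, set $\gamma = \alpha^{\boxprod n}$, computed in $\arrowmpp$. Since $\arrowmppproj$ is a monoidal model category (Theorem \ref{hovey-projective} together with the Remark following it), iterated pushout products of the cofibration $\alpha$ are again cofibrations, so $\gamma$ is a $\Sigma_n$-equivariant cofibration in $\arrowmppproj$; applying Theorem \ref{hovey-projective}(1) to $\gamma$ shows that $\Ev_0\gamma$, $\Ev_1 \gamma = (\Ev_1\alpha)^{\boxprod n}$, and the pushout corner map $c_\gamma$ of $\gamma$ are all cofibrations in $\M$, equivariantly in $\M^{\Sigma_n}$.

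By Theorem \ref{hovey-projective}(1), to prove that $\beta$ is a cofibration in $\arrowmppproj$ it suffices to show that $\Ev_0\beta$ and the pushout corner map of $\beta$ are cofibrations in $\M$. The top vertex is easy: by direct inspection of the pushout-product monoidal structure one has $\Ev_1(U \boxprod V) = \Ev_1 U \otimes \Ev_1 V$ and $\Ev_1$ preserves colimits, so $\Ev_1$ is a colimit-preserving strict symmetric monoidal functor and therefore commutes with the $\Sigma_n$-coend and with the pushout-product power, giving $\Ev_1 \beta = \Ev_1 X \tensorover{\Sigma_n} (\Ev_1\alpha)^{\boxprod n}$ in $\M$. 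As $\Ev_1 X \in \Msigmanop$ is underlying cofibrant and $\Ev_1\alpha$ is a cofibration in $\M$, the condition $\clubcof$ in $\M$ --- which holds because $(\heartsuit)$ implies $\clubcof$ (see the Remark) --- shows $\Ev_1\beta$ is a cofibration.

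The heart of the argument is the pushout corner of $\beta$. Using that $\Ev_0$ also preserves colimits, together with the formula $\Ev_0(U \boxprod V) = (\Ev_0 U \otimes \Ev_1 V) \coprodover{\Ev_0 U \otimes \Ev_0 V} (\Ev_1 U \otimes \Ev_0 V)$ for the domain of the monoidal product of $\arrowmpp$, I would carry out a Fubini-style interchange of the nested pushouts and $\Sigma_n$-coends to identify the pushout corner map of $\beta$ with the $\Sigma_n$-pushout product $X \boxprodover{\Sigma_n} c_\gamma$ of the map underlying $X$ with the corner cofibration $c_\gamma$. Since that map is a cofibration between cofibrant objects in $\M^{\sigmaop_n}$ and $c_\gamma$ is an underlying cofibration in $\M^{\Sigma_n}$, condition $(\heartsuit)$ in $\M$ then yields that this corner map is a cofibration. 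The remaining condition, that $\Ev_0\beta$ is a cofibration, follows from the same interchange: $\Ev_0\beta$ is the map on pushout-product domains induced by $\gamma$, and the gluing lemma reduces it to cofibrancy of its constituent legs, which are handled by $\clubcof$ on the stratum $\Ev_1\gamma = (\Ev_1\alpha)^{\boxprod n}$ and by $(\heartsuit)$ applied to $\varnothing \to \Ev_0 X$ and $\varnothing \to \Ev_1 X$ --- the special case recovering that $\Ev_0 X \tensorover{\Sigma_n}(-)$ and $\Ev_1 X \tensorover{\Sigma_n}(-)$ preserve underlying cofibrations. With both conditions in hand, Theorem \ref{hovey-projective}(1) shows $\beta$ is a cofibration in $\arrowmppproj$, which is $\clubcof$.

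The main obstacle is precisely the colimit bookkeeping in the third paragraph: carefully interchanging the outer $\Sigma_n$-coend with the inner punctured-cube pushout defining $\dom(\alpha^{\boxprod n})$, and matching the resulting object with $\dom\bigl(X \boxprodover{\Sigma_n} c_\gamma\bigr)$, all while tracking the $\Sigma_n$-equivariance that lets $(\heartsuit)$ apply. Once this identification is pinned down, the corner condition is immediate from $(\heartsuit)$ and the $\Ev_1$-condition from $\clubcof$, and the $\Ev_0$-condition becomes a routine gluing-lemma consequence.
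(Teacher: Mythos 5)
Your proposal follows the paper's proof in all structural respects: you unravel ``$X$ underlying cofibrant in $\arrowmppproj$'' into ``the map $\Ev_0X \to \Ev_1X$ is a cofibration between cofibrant objects of $\M$,'' matching the $f$-hypothesis of $(\heartsuit)$; you use the monoidality of $\arrowmppproj$ (from \cite{white-yau-arrowcat}) to see that $\gamma = \alpha^{\boxprod_2 n}$ is a $\Sigma_n$-equivariant projective cofibration, so that $\Ev_0\gamma$, $\Ev_1\gamma$, and the corner map $c_\gamma$ are cofibrations in $\M$; you reduce via Theorem \ref{hovey-projective}(1) to the two conditions that $\Ev_0\beta$ and the pushout corner map of $\beta$ be cofibrations; and your identification of the corner map of $\beta$ with $X \boxprodover{\Sigma_n} c_\gamma$, followed by $(\heartsuit)$, is exactly the paper's Lemma \ref{lemma-heart2}. (Two cosmetic remarks: your verification that $\Ev_1\beta$ is a cofibration is superfluous, since Theorem \ref{hovey-projective}(1) already notes it follows from the other two conditions; and the paper treats $n=1$ separately via the pushout product axiom, though your uniform treatment also covers it.)

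There is, however, one genuine defect, in the $\Ev_0\beta$ step: the claim that ``the gluing lemma reduces it to cofibrancy of its constituent legs'' is false as a criterion. A map of pushout spans that is a levelwise cofibration need not induce a cofibration on pushouts: in simplicial sets, the map of spans from $\Delta^1 \leftarrow \partial\Delta^1 \rightarrow \Delta^1$ to $\Delta^1 \leftarrow \Delta^1 \rightarrow \Delta^1$ (identities on the outer objects, the inclusion in the middle) is a cofibration in each spot, yet the induced map $\Delta^1 \coprod_{\partial\Delta^1} \Delta^1 \rightarrow \Delta^1$ on pushouts is not monic. What is actually needed is that the map of spans \eqref{varphi-is-pushout} be a \emph{Reedy} cofibration, which the paper verifies in Lemma \ref{lemma-heart1} using the Reedy category $\{-1 \leftarrow 0 \rightarrow 1\}$ and the Quillen adjunction $\colim \dashv \mathrm{constant}$: the conditions are that the left and middle verticals $\Ev_1X \tensorover{\Sigma_n} \zeta_1$ and $\Ev_0X \tensorover{\Sigma_n} \zeta_1$ (with $\zeta_1 = \Ev_0\gamma$) be cofibrations --- your $(\heartsuit)$-special cases handle these --- \emph{and} that the pushout corner map of the right-hand square, namely $\Ev_0X \tensorover{\Sigma_n} c_\gamma$, be a cofibration. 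Your ingredient ``$\clubcof$ on the stratum $\Ev_1\gamma$'' supplies only the right vertical leg, which is neither needed nor sufficient. The repair stays inside your toolkit --- apply $(\heartsuit)$ for $\varnothing \to \Ev_0X$ to the map $c_\gamma$, which you already established is an underlying cofibration in $\M^{\Sigma_n}$ --- but as written the gluing step would fail.
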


\begin{proof}
Suppose $f_X : X_0 \to X_1$ is an object in $(\arrowmppproj)^{\Sigmanop}$ that is underlying cofibrant in $\arrowmppproj$.  This means that $f_X$ is a morphism in $\Msigmanop$ that is an underlying cofibration between cofibrant objects in $\M$.  The condition $\clubcof$ for $\arrowmppproj$ asks that the function
\[f_X \boxprodover{\Sigma_n} (-)^{\boxprod_2 n} : \arrowmppproj \to \arrowmppproj\]
preserve cofibrations, where $\boxprod$ and $\boxprod_2$ are the pushout products in $\M$ and $\arrowmpp$, respectively.  When $n=1$ the condition $\clubcof$ for $\arrowmppproj$ is a special case of the pushout product axiom in $\arrowmppproj$, which is true by Theorem A in \cite{white-yau-arrowcat}.

Next suppose $n \geq 2$ and $\alpha : f_V \to f_W$ is a morphism in $\arrowm$ as in \eqref{alphavw}.  The iterated pushout product $\alpha^{\boxprod_2 n} \in (\arrowmpp)^{\Sigma_n}$ is the commutative square
\begin{equation}\label{alphaboxn}
\nicexy@R-.3cm{Z \ar[d]_-{\zeta_0} \ar[r]^-{\zeta_1} & Y_1 \ar[d]^-{f_W^{\boxprod n}}\\
Y_0 \ar[r]^-{\alpha_1^{\boxprod n}} & W_1^{\otimes n}}
\end{equation}
in $\M^{\Sigma_n}$ for some object $Z$ with $\zeta_1 = \Ev_0(\alpha^{\boxprod_2 n})$.  Note that $\zeta_1$ is not an iterated pushout product because $\Ev_0$ and $\boxprod_2$ do not commute.  Applying $f_X \boxprod_{\Sigma_n} (-)$, the morphism $f_X \boxprod_{\Sigma_n} \alpha^{\boxprod_2 n}$ is the commutative square
\begin{equation}\label{falpha-boxn}
\scalebox{.9}{$
\nicexy{\Bigl[(X_1 \otimes Z) \coprodover{X_0 \otimes Z} (X_0 \otimes Y_0)\Bigr]_{\Sigma_n} \ar[d]_-{f_X \boxprodover{\Sigma_n} \zeta_0} \ar[r]^-{\varphi} 
& \Bigl[(X_1 \otimes Y_1) \coprodover{X_0 \otimes Y_1} (X_0 \otimes W_1^{\otimes n})\Bigr]_{\Sigma_n} \ar[d]^-{f_X \boxprodover{\Sigma_n} f_W^{\boxprod n}}\\
(X_1 \otimes Y_0)_{\Sigma_n} \ar[r]^-{(X_1 \otimes\, \alpha_1^{\boxprod n})_{\Sigma_n}}
& (X_1 \otimes W_1^{\otimes n})_{\Sigma_n}}$}
\end{equation}
in $\M$.  Suppose $\alpha$ is a cofibration in $\arrowmppproj$.  This means that the morphism $\alpha_0 : V_0 \to W_0$ and the pushout corner morphism $\alpha_1 \pcorner f_W : V_1 \coprod_{V_0} W_0 \to W_1$ are cofibrations in $\M$.  We must show that $f_X \boxprod_{\Sigma_n} \alpha^{\boxprod_2 n}$ is a cofibration in $\arrowmppproj$.  In other words, we must show that in \eqref{falpha-boxn}:
\begin{enumerate}
\item $\varphi = \Ev_0\bigl(f_X \boxprod_{\Sigma_n} \alpha^{\boxprod_2 n}\bigr)$ is a cofibration in $\M$.
\item The pushout corner morphism of $f_X \boxprod_{\Sigma_n} \alpha^{\boxprod_2 n}$ is a cofibration in $\M$. 
\end{enumerate}
We will prove (1) and (2) in Lemmas \ref{lemma-heart1} and \ref{lemma-heart2}, respectively.
\end{proof}

\begin{lemma}\label{lemma-heart1}
The morphism $\varphi$ in \eqref{falpha-boxn} is a cofibration in $\M$.
\end{lemma}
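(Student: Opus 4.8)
The plan is to reduce the claim to two applications of the hypothesis $(\heartsuit)$, by factoring the morphism $\alpha^{\boxprod_2 n}$ inside $\arrowm$ and applying the functor $\Ev_0\bigl(f_X \boxprodover{\Sigma_n}(-)\bigr)$ to that factorization. Write $q = \bigl(\zeta_0\colon Z \to Y_0\bigr)$ and $q' = \bigl(f_W^{\boxprod n}\colon Y_1 \to W_1^{\otimes n}\bigr)$ for the source and target of $\alpha^{\boxprod_2 n}$ as objects of $\arrowmpp$. Since the pushout product of two objects of $\arrowmpp$ is their pushout product in $\M$, the map $\varphi$ is the $\Sigma_n$-coinvariants of the map $\dom(f_X \boxprod q) \to \dom(f_X \boxprod q')$ induced by the square \eqref{alphaboxn}, where $\dom(f_X \boxprod q) = X_1 \otimes Z \coprodover{X_0 \otimes Z} X_0 \otimes Y_0$ and similarly for $q'$.

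First I would record the only cofibrancy input needed. Because $\alpha$ is a cofibration in $\arrowmppproj$ and $\arrowmppproj$ is a monoidal model category (Theorem \ref{hovey-projective} together with \cite{white-yau-arrowcat}), the iterated pushout product $\alpha^{\boxprod_2 n}$ is a $\Sigma_n$-equivariant cofibration in $\arrowmppproj$. By the description of cofibrations in Theorem \ref{hovey-projective}, this says precisely that $\zeta_1 = \Ev_0\bigl(\alpha^{\boxprod_2 n}\bigr)\colon Z \to Y_1$ and the pushout corner map $c\colon Y_0 \coprodover{Z} Y_1 \to W_1^{\otimes n}$ of the square \eqref{alphaboxn} are cofibrations in $\M$, hence underlying cofibrations in $\Msigman$.

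Next I would factor $\alpha^{\boxprod_2 n}$ in $\arrowm$ through the canonical pushout, $\Sigma_n$-equivariantly, as $q \to q'' \to q'$ with $q'' = \bigl(Y_1 \to Y_0 \coprodover{Z} Y_1\bigr)$: the first morphism is $\zeta_1$ on domains and the pushout inclusion on codomains, and the second is the identity on domains and $c$ on codomains. Applying $\Ev_0\bigl(f_X \boxprodover{\Sigma_n}(-)\bigr)$, which preserves composition since both $\Ev_0$ and $\Sigma_n$-coinvariants preserve colimits, gives $\varphi = \varphi_2 \circ \varphi_1$ through the object whose pre-coinvariant form is $P = X_1 \otimes Y_1 \coprodover{X_0 \otimes Z} X_0 \otimes Y_0$. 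A direct colimit computation then identifies $\varphi_1$ as a cobase change of $X_1 \tensorover{\Sigma_n}\zeta_1$ and $\varphi_2$ as a cobase change of $X_0 \tensorover{\Sigma_n} c$.

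Finally I would apply $(\heartsuit)$ twice, using that $(\varnothing \to X_i)\boxprod (-) = X_i \otimes (-)$. With $g = (\varnothing \to X_1)$, a cofibration between cofibrant objects in $\Msigmanop$ since $X_1$ is cofibrant, the map $X_1 \tensorover{\Sigma_n}\zeta_1 = g \boxprodover{\Sigma_n}\zeta_1$ is a cofibration, so its cobase change $\varphi_1$ is too; with $g = (\varnothing \to X_0)$ and the underlying cofibration $c$, the map $X_0 \tensorover{\Sigma_n} c$ is a cofibration, so $\varphi_2$ is too. Hence $\varphi = \varphi_2\circ\varphi_1$ is a cofibration. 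The main obstacle is the bookkeeping in the third step: identifying the intermediate object $P$ correctly and verifying that each $\varphi_i$ is genuinely a cobase change of the stated map, all while tracking the $\Sigma_n$-actions so that passing to coinvariants converts $X_i \otimes (-)$ into $X_i \tensorover{\Sigma_n}(-)$ and matches the exact shape required by $(\heartsuit)$.
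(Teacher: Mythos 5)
Your proof is correct, but it assembles the pieces by a different mechanism than the paper, which is worth recording. Both arguments extract the same cofibrancy inputs from the fact that $\alpha^{\boxprod_2 n}$ is a cofibration in $(\arrowmppproj)^{\Sigma_n}$ --- namely that $\zeta_1 = \Ev_0(\alpha^{\boxprod_2 n})$ and the pushout corner map $c \colon Y_0 \coprodover{Z} Y_1 \to W_1^{\otimes n}$ are underlying cofibrations --- and both feed them into $(\heartsuit)$ via the observation $(\varnothing \to X_i) \boxprod (-) = X_i \otimes (-)$. The paper, however, regards $\varphi$ as the map induced on pushouts by the map of spans \eqref{varphi-is-pushout}, endows the span category $\sD$ with its Reedy structure, verifies that \eqref{varphi-is-pushout} is a Reedy cofibration, and concludes via the Quillen adjunction $\colim \dashv \mathrm{constant}$; this costs three applications of $(\heartsuit)$, since Reedy cofibrancy also demands that the middle vertical $(X_0 \tensorover{\Sigma_n} \zeta_1)$ of \eqref{varphi-is-pushout} be a cofibration. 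You instead factor $\alpha^{\boxprod_2 n}$ through $q'' = \bigl(Y_1 \to Y_0 \coprodover{Z} Y_1\bigr)$ inside the arrow category and exhibit $\varphi = \varphi_2 \comp \varphi_1$, with $\varphi_1$ a cobase change of $X_1 \tensorover{\Sigma_n} \zeta_1$ and $\varphi_2$ a cobase change of $X_0 \tensorover{\Sigma_n} c$; your identification of the intermediate object $P$ and of the two pushout squares is correct by pasting of pushouts, and the equivariant bookkeeping goes through because $\Sigma_n$-coinvariants, being a left adjoint, preserve the pushouts in question. Your route is therefore more elementary (no Reedy machinery) and slightly more economical (two applications of $(\heartsuit)$ rather than three: the middle vertical of \eqref{varphi-is-pushout} is never used). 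In effect your factorization is the hand-unwound form of the paper's Reedy cofibration argument, which in exchange delegates all of the colimit bookkeeping to a single standard left Quillen functor.
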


\begin{proof}
Taking $\Sigma_n$-coinvariants and taking pushouts commute by the commutation of colimits.  So $\varphi$ is also the induced morphism from the pushout of the top row to the pushout of the bottom row in the commutative diagram
\begin{equation}\label{varphi-is-pushout}
\nicexy@C+.4cm{(X_1 \otimes Z)_{\Sigma_n} \ar[d]_-{(X_1 \otimes\, \zeta_1)_{\Sigma_n}} & (X_0 \otimes Z)_{\Sigma_n} \ar[l]_-{(f_X \otimes Z)_{\Sigma_n}} \ar[d]_{(X_0 \,\otimes\, \zeta_1)_{\Sigma_n}} \ar[r]^-{(X_0 \,\otimes\, \zeta_0)_{\Sigma_n}} & (X_0 \otimes Y_0)_{\Sigma_n} \ar[d]^-{(X_0 \,\otimes\, \alpha_1^{\boxprod n})_{\Sigma_n}}\\
(X_1 \otimes Y_1)_{\Sigma_n} & (X_0 \otimes Y_1)_{\Sigma_n} \ar[l]^-{(f_X \otimes Y_1)_{\Sigma_n}} \ar[r]_-{(X_0 \,\otimes\, f_W^{\boxprod n})_{\Sigma_n}} & (X_0 \otimes W_1^{\otimes n})_{\Sigma_n}}
\end{equation}
in $\M$.  Here the left square is commutative by definition, and the right square is $X_0 \otimes_{\Sigma_n} (-)$ applied to $\alpha^{\boxprod_2 n}$ in \eqref{alphaboxn}.

We consider the Reedy category $\sD$ with three objects $\{-1,0,1\}$, a morphism $0 \to -1$ that lowers the degree, a morphism $0 \to 1$ that raises the degree, and no other non-identity morphisms.  Using the Quillen adjunction \cite{hovey} (proof of 5.2.6)
\[\nicexy@C+.5cm{\M^{\sD} \ar@<2pt>[r]^-{\colim} & \M \ar@<2pt>[l]^-{\mathrm{constant}}}\]
to show that $\varphi$ is a cofibration in $\M$, it is enough to show that \eqref{varphi-is-pushout} is a Reedy cofibration in $\M^{\sD}$.  So we must show that in \eqref{varphi-is-pushout}:
\begin{enumerate}
\item The left and the middle vertical arrows are cofibrations in $\M$.
\item The pushout corner morphism of the right square is a cofibration in $\M$.
\end{enumerate}

The objects $X_0$ and $X_1$ in $\Msigmanop$ are cofibrant in $\M$.  The morphism $\zeta_1 = \Ev_0(\alpha^{\boxprod_2 n}) \in \M^{\Sigma_n}$ is an underlying cofibration in $\M$.  Indeed, since $\alpha \in \arrowmppproj$ is a cofibration, so is the iterated pushout product $\alpha^{\boxprod_2 n}$ by the pushout product axiom \cite{white-yau-arrowcat}.  In particular, $\Ev_0(\alpha^{\boxprod_2 n})$ is a cofibration in $\M$.  The condition $(\heartsuit)$ in $\M$ (for the morphism $\varnothing \to X_i$) now implies that the left and the middle vertical morphisms $X_i \otimes_{\Sigma_n} \zeta_1$ in \eqref{varphi-is-pushout} are cofibrations in $\M$.

Finally, since $X_0 \in \Msigmanop$ is cofibrant in $\M$ and since the pushout corner morphism of $\alpha^{\boxprod_2 n} \in (\arrowmppproj)^{\Sigma_n}$ is a cofibration in $\M$, the condition $(\heartsuit)$ in $\M$ again implies the pushout corner morphism of the right square $X_0 \otimes_{\Sigma_n} \alpha^{\boxprod_2 n}$ in \eqref{varphi-is-pushout} is a cofibration in $\M$. 
\end{proof}

\begin{lemma}\label{lemma-heart2}
The pushout corner morphism of $f_X \boxprod_{\Sigma_n} \alpha^{\boxprod_2 n}$ in \eqref{falpha-boxn} is a cofibration in $\M$. 
\end{lemma}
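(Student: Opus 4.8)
The plan is to recognize the pushout corner map of \eqref{falpha-boxn} as a single application of the functor $f_X \boxprodover{\Sigma_n}(-)$ and then to conclude directly from $(\heartsuit)$. Reading the square \eqref{alphaboxn} as a morphism of $\arrowmpp$ with domain object $\zeta_0 \colon Z \to Y_0$ and codomain object $f_W^{\boxprod n} \colon Y_1 \to W_1^{\tensor n}$, its pushout corner map is the map
\[\gamma \colon Y_0 \coprodover{Z} Y_1 \to W_1^{\tensor n}\]
in $\M^{\Sigma_n}$. The claim driving the proof is that the pushout corner map of the square $f_X \boxprodover{\Sigma_n}\alpha^{\boxprod_2 n}$ is canonically isomorphic to $f_X \boxprodover{\Sigma_n}\gamma$. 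Both have codomain $X_1 \tensorover{\Sigma_n} W_1^{\tensor n}$, and their common domain is the colimit of a single punctured-cube diagram assembled from $X_0$, $X_1$, and the four corners $Z$, $Y_0$, $Y_1$, $W_1^{\tensor n}$ of \eqref{alphaboxn}.

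This identification is the associativity, or interchange, of the pushout-product construction: since $\tensorover{\Sigma_n}$ commutes with the pushouts and with the $\Sigma_n$-coinvariants appearing here, forming the pushout corner map commutes with the functor $f_X \boxprodover{\Sigma_n}(-)$. I would verify it exactly as in the proof of Lemma \ref{lemma-heart1}, by exhibiting the relevant diagram over a small Reedy category and using the cofinality of the sub-cube whose vertices have at most two non-terminal coordinates \cite{maclane} (IX.3). This equivariant interchange of colimits is the one step that demands genuine care; everything after it is formal.

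It then remains to feed $\gamma$ into $(\heartsuit)$. The first input $f_X$ is, by hypothesis, an underlying cofibration between cofibrant objects in $\Msigmanop$. For the second input, since $\alpha$ is a cofibration in $\arrowmppproj$, the iterated pushout product $\alpha^{\boxprod_2 n}$ is again a cofibration in $\arrowmppproj$ by the pushout product axiom for $\arrowmppproj$, established in Theorem A of \cite{white-yau-arrowcat}; the characterization of projective cofibrations in Theorem \ref{hovey-projective}(1) then makes its pushout corner map $\gamma$ a cofibration in $\M$, hence an underlying cofibration in $\M^{\Sigma_n}$ --- the same fact already used in the proof of Lemma \ref{lemma-heart1}. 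Applying $(\heartsuit)$ to the equivariant cofibration $f_X$ and the underlying cofibration $\gamma$ yields that $f_X \boxprodover{\Sigma_n}\gamma$ is a cofibration in $\M$, which by the first paragraph is the desired pushout corner map of \eqref{falpha-boxn}.
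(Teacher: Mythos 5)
Your proof is correct and takes essentially the same route as the paper: the paper likewise identifies the pushout corner map of \eqref{falpha-boxn} with $f_X \boxprodover{\Sigma_n} (\alpha_1^{\boxprod n} \pcorner f_W^{\boxprod n})$ (your $\gamma$), observes that this map is a cofibration in $\M$ because $\alpha^{\boxprod_2 n}$ is a cofibration in $\arrowmppproj$ by the pushout product axiom of \cite{white-yau-arrowcat}, and concludes by applying $(\heartsuit)$. The only cosmetic difference is in justifying the interchange: the paper exhibits the map directly as the $\Sigma_n$-coinvariants of an explicit pushout-product diagram in $\M^{\Sigma_n}$ (using that $\otimes$ commutes with colimits and coinvariants commute with pushouts), rather than your Reedy/cofinality verification.
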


\begin{proof}
The pushout corner morphism of $f_X \boxprod_{\Sigma_n} \alpha^{\boxprod_2 n}$ is the morphism $f_X \boxprod_{\Sigma_n} (\alpha_1^{\boxprod n} \pcorner f_W^{\boxprod n})$.  This is the $\Sigma_n$-coinvariants of the pushout product in the diagram
\[\scalebox{.8}{$
\nicexy@C-.4cm@R-.5cm{X_0 \otimes \bigl(Y_0 \coprodover{Z} Y_1\bigr) \ar@{}[dr]|-{\mathrm{pushout}} \ar[d]_-{f_X \otimes\, \Id} \ar[r]^-{\Id \otimes (\alpha_1^{\boxprod n} \pcorner f_W^{\boxprod n})} & X_0 \otimes W_1^{\otimes n} 
\ar[d] \ar@/^3pc/[ddr]^-{f_X \otimes\, \Id} &\\
X_1 \otimes \bigl(Y_0 \coprodover{Z} Y_1\bigr) \ar[r] \ar@/_1.5pc/[drr]_-{\Id \otimes  (\alpha_1^{\boxprod n} \pcorner f_W^{\boxprod n})} & \Bigl[X_1 \otimes \bigl(Y_0 \coprodover{Z} Y_1\bigr)\Bigr] \coprodover{\bigl[X_0 \otimes (Y_0 \coprodover{Z}Y_1)\bigr]} (X_0 \otimes W_1^{\otimes n}) 
\ar@{}[dr]^(.35){}="a"^(.9){}="b" \ar "a";"b" |-{f_X \boxprod (\alpha_1^{\boxprod n} \pcorner f_W^{\boxprod n})} &\\ && X_1 \otimes W_1^{\otimes n}}$}\]
in $\M^{\Sigma_n}$ with $\alpha_1^{\boxprod n} \pcorner f_W^{\boxprod n}$ the pushout corner morphism of $\alpha^{\boxprod_2 n} \in (\arrowmppproj)^{\Sigma_n}$ in \eqref{alphaboxn}.  Since $\alpha^{\boxprod_2 n}$ is a cofibration in $\arrowmppproj$, its pushout corner morphism $\alpha_1^{\boxprod n} \pcorner f_W^{\boxprod n}$ is a cofibration in $\M$.  So the condition $(\heartsuit)$ in $\M$ implies that $f_X \boxprod_{\Sigma_n} (\alpha_1^{\boxprod n} \pcorner f_W^{\boxprod n})$ is a cofibration in $\M$.
\end{proof}

%=========================================================
\subsection{Underlying Cofibrancy of Cofibrant Smith Ideals for Entrywise Cofibrant Operads}

\begin{theorem}\label{underlying-cofibrant}
Suppose $\M$ is a cofibrantly generated monoidal model category satisfying $(\heartsuit)$ and $\clubtcof$, in which the domains and the codomains of the generating (trivial) cofibrations are small with respect to the entire category.  Suppose $\O$ is an entrywise cofibrant $\fC$-colored operad in $\M$.  Then $\algompp$ and $\algomtensor$ admit transferred semi-model structures, and cofibrant Smith $\O$-ideals are underlying cofibrant in $(\arrowmppproj)^{\fC}$.  In particular, if $\M$ is also stable, then there is a Quillen equivalence
\[\nicexy{\algompp \ar@<3pt>[r]^-{\coker} & \algomtensor. \ar@<1pt>[l]^-{\ker}}\]
\end{theorem}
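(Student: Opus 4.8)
The plan is to reduce the statement to the underlying-cofibrancy theory for operadic algebras developed in \cite{white-yau}, but applied not in $\M$ itself, rather in the arrow category $\arrowmppproj$. By definition, the category of Smith $\O$-ideals is $\algompp = \alg(\arrowopp;\arrowmpp)$, the category of algebras over the $\fC$-colored operad $\arrowopp = L_1\O$ in the monoidal model category $\arrowmppproj$; moreover, by Prop.~\ref{cgoverarrowm} its model structure (Def.~\ref{def:smitho-model}(1)) is the one transported along the free-forgetful adjunction from $(\arrowmppproj)^{\fC}$, i.e.\ the standard model structure on $\arrowopp$-algebras. Consequently the assertion ``cofibrant Smith $\O$-ideals are underlying cofibrant in $(\arrowmppproj)^{\fC}$'' is precisely the statement that cofibrant $\arrowopp$-algebras are underlying cofibrant in $(\arrowmppproj)^{\fC}$, which is exactly the kind of conclusion the machinery of \cite{white-yau} produces once the ground category satisfies $\clubcof$ and the operad is entrywise cofibrant.

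First I would record that $\arrowmppproj$ is a suitable ground category. By Theorem~\ref{hovey-projective}(3) it is a monoidal model category, and by Prop.~\ref{arrrowm-strongly} together with the smallness hypotheses on the generating (trivial) cofibrations of $\M$ it is strongly cofibrantly generated. The admissibility of $\arrowopp$ -- that is, the existence of the algebra model structure used here -- is already in hand from Def.~\ref{def:smitho-model}(1) and Prop.~\ref{cgoverarrowm}, so this step does \emph{not} require a trivial-cofibration analogue of $\clubcof$ for the arrow category; only the plain cofibration variant will be needed below.

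Next I would verify the two inputs to the underlying-cofibrancy result. The cofibrancy condition is supplied by Theorem~\ref{arrowm-club}: since $\M$ satisfies $(\heartsuit)$, the arrow category $\arrowmppproj$ satisfies $\clubcof$. The operadic condition is that $\arrowopp$ be entrywise cofibrant in $\arrowmppproj$; this holds because $L_1 : \M \to \arrowmppproj$ is left Quillen, hence preserves cofibrant objects (Prop.~\ref{sigmac-cofibrant-arrrowcat}), and $\O$ is entrywise cofibrant in $\M$, so each entry $\arrowopp\duc = L_1\bigl(\O\duc\bigr)$ is cofibrant in $\arrowmppproj$. With these two inputs, the underlying-cofibrancy conclusion attached to $\clubcof$ in \cite{white-yau} (Section~6.2) -- the entrywise-cofibrant analogue of the $\Sigmac$-cofibrant statement \cite{white-yau} (6.3.1(2)) invoked in Cor.~\ref{sigmacof-smith=map} -- applied to the operad $\arrowopp$ in $\arrowmppproj$, yields that every cofibrant $\arrowopp$-algebra is underlying cofibrant in $(\arrowmppproj)^{\fC}$. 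Translating back through the identification of the first paragraph gives the desired conclusion for cofibrant Smith $\O$-ideals.

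Finally, for the ``in particular'' clause I would simply feed this into Theorem~\ref{smith=map}: assuming additionally that $\M$ is stable, the only hypothesis of that theorem not already among our standing assumptions on $\M$ -- namely that cofibrant $\arrowopp$-algebras are underlying cofibrant in $(\arrowmppproj)^{\fC}$ -- has just been established, so $(\coker,\ker)$ is a Quillen equivalence. I expect the genuine mathematical content to sit entirely in Theorem~\ref{arrowm-club} (already proved) and in the imported filtration argument of \cite{white-yau}; the point to get right within this proof is the bookkeeping that \emph{entrywise} cofibrancy of $\O$, rather than the stronger $\Sigmac$-cofibrancy, is exactly the deficit that $\clubcof$ (equivalently $(\heartsuit)$ for $\M$, via Theorem~\ref{arrowm-club}) is designed to compensate for, so that the weaker operadic hypothesis still forces underlying cofibrancy of cofibrant algebras.
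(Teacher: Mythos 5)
Your proposal follows the same route as the paper: identify Smith $\O$-ideals with $\arrowopp$-algebras in $\arrowmppproj$, get $\clubcof$ for $\arrowmppproj$ from $(\heartsuit)$ via Theorem \ref{arrowm-club}, observe that $\arrowopp = L_1\O$ is entrywise cofibrant, and feed the resulting underlying-cofibrancy statement into Theorem \ref{smith=map}. The one place you diverge is that you black-box the key step as ``the entrywise-cofibrant analogue of \cite{white-yau} (6.3.1(2)) attached to $\clubcof$ in Section 6.2 of \cite{white-yau},'' whereas the paper does not cite any single packaged result but instead runs the transfinite cell induction by hand: it first checks that the initial $\arrowopp$-algebra $\varnothing^{\arrowopp}$ is underlying cofibrant (this is exactly where entrywise cofibrancy of $\O$ enters, via the entries $\varnothing^{\M} \to \O\dnothing$ --- a point your writeup uses only to get $\arrowopp$ entrywise cofibrant), then uses Prop.~\ref{cgoverarrowm} and \cite{hirschhorn} (11.2.2) to present a cofibrant Smith $\O$-ideal as a retract of a transfinite composition of pushouts of maps in $\arrowopp \circ (L_0I \cup L_1I)_c$, and finally proves Lemma \ref{pushout-undelrying-cof} by combining the filtration of \cite{white-yau} (4.3.16) with \cite{white-yau} (6.2.4) (entrywise cofibrancy of the enveloping operad $\arrowopp_{B_0}$) and two applications of $\clubcof$. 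The authors' decision to unpack the argument suggests that no off-the-shelf statement in \cite{white-yau} applies verbatim to $\arrowmppproj$ --- in particular, admissibility of $\arrowopp$ there is obtained indirectly through Cor.~\ref{smitho-m} and $(\spadesuit)$ in $\M$, not through hypotheses on the arrow category itself --- so your citation is the one soft spot; if the packaged result you invoke does not exist with exactly those hypotheses, your proof has a gap precisely where the paper does its real work. That said, you correctly assembled every ingredient needed to reconstruct the induction, including the genuinely important bookkeeping points that only the cofibration variant $\clubcof$ (not a trivial-cofibration analogue) is required and that the algebra model structure is already in hand, so your argument is repairable into the paper's proof with no new ideas.
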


\begin{proof}
If $\O$ is entrywise cofibrant in $\M$, then $\arrowopp = L_1 \O$ is entrywise cofibrant in $\arrowmppproj$, and $\arrowotensor = L_0 \O$ is entrywise cofibrant in $\arrowmtensorinj$ by Proposition \ref{sigmac-cofibrant-arrrowcat}.  Furthermore, because $\M$ satisfies $\clubtcof$, so does $\arrowmtensorinj$, by the exact same proof as in Theorem \ref{spade-arrow-tensor} (but now $X_0$ and $X_1$ are cofibrant in $\M$, and we appeal to $\clubtcof$ in $\M$ instead of $(\spadesuit)$). Thus, we have transferred semi-model structures
\begin{itemize}
\item $\algomtensor$ by \cite{white-yau} (6.2.3) applied to $\arrowmtensorinj$ and
\item $\algompp$ by \cite{white-yau} (6.2.3) applied to the colored operad $\O^s$ in $\M$ in Corollary \ref{smitho-m}.
\end{itemize} 

Using Theorem \ref{smith=map}, it is enough to prove the assertion that cofibrant Smith $\O$-ideals are underlying cofibrant in $(\arrowmppproj)^{\fC}$.  Writing $\varnothing^{\arrowopp}$ for the initial $\arrowopp$-algebra, first we claim that $\varnothing^{\arrowopp}$ is underlying cofibrant in $(\arrowmppproj)^{\fC}$.  Indeed, for each color $d \in \fC$, the $d$-colored entry of the initial $\arrowopp$-algebra is the object 
\[\varnothing^{\arrowopp}_d = \arrowopp\dnothing = \left(\varnothing^{\M} \to \O\dnothing\right)\]
in $\arrowmppproj$, where $\varnothing^{\M}$ is the initial object in $\M$ and the symbol $\varnothing$ in $\dnothing$ is the empty $\fC$-profile.  Since $\O$ is assumed entrywise cofibrant, it follows that each entry of the initial $\arrowopp$-algebra $\varnothing^{\arrowopp}$ is underlying cofibrant in $\arrowmppproj$.  Indeed, the pushout corner morphism of 
\[\begin{tikzcd}
\varnothing^{\M} \ar{d} \ar{r} & \varnothing^{\M} \ar{d}\\
\varnothing^{\M} \ar{r} & \O\dnothing
\end{tikzcd}\]
is the cofibration $\varnothing^{\M} \to \O\dnothing$ in $\M$, so by Theorem \ref{hovey-projective} (1) $\varnothing^{\arrowopp}_d$ is cofibrant in $\arrowmppproj$.

By Proposition \ref{cgoverarrowm}, the semi-model structure on $\algompp$ is right-induced by the forgetful functor $U$ to $(\arrowmppproj)^{\fC}$ and is cofibrantly generated by $\arrowopp \circ (L_0I \cup L_1I)_c$ and $\arrowopp \circ (L_0J \cup L_1J)_c$ for $c \in \fC$, where $I$ and $J$ are the generating (trivial) cofibrations in $\M$. Suppose $A$ is a cofibrant $\arrowopp$-algebra.  We must show that $A$ is underlying cofibrant in $(\arrowmppproj)^{\fC}$.  By \cite{hirschhorn} (11.2.2), the cofibrant $\arrowopp$-algebra $A$ is the retract of the colimit of a transfinite composition, starting with $\varnothing^{\arrowopp}$, of pushouts of morphisms in $\arrowopp \circ (L_0I \cup L_1I)_c$ for $c \in \fC$.  Since $\varnothing^{\arrowopp}$ is underlying cofibrant in $\arrowmppproj$ and since the class of cofibrations in a model category, such as $(\arrowmppproj)^{\fC}$, is closed under transfinite compositions \cite{hirschhorn} (10.3.4), the following Lemma will finish the proof.
\end{proof}

The proof of Lemma \ref{pushout-undelrying-cof} below uses the next definition from \cite{white-yau} (4.3.5).

\begin{definition}[$\O_A$ for $\O$-algebras]
\label{oaalgebra}
For a $\fC$-colored operad $\O$ in $\M$ and $A \in \algom$, define the $\fC$-colored symmetric sequence $\O_A$ as follows.  For $d \in \fC$ and orbit $[\uc] \in \Sigmac$, define the component
\[\O_A\smallbinom{d}{[\uc]} \in \M^{\Sigmaop_{[\uc]} \times \{d\}}\]
as the reflexive coequalizer of the following diagram in $\M^{\Sigmaop_{[\uc]} \times \{d\}}$.
\[\begin{tikzcd}
\coprod\limits_{[\ua] \in \Sigmac} \O\smallbinom{d}{[\ua],\,[\uc]}
\tensorover{\Sigma_{[\ua]}} (\O \circ A)_{[\ua]}
\ar{r}{d_1} \ar[shift right=2]{r}[swap]{d_0} &
\coprod\limits_{[\ua] \in \Sigmac} \O\smallbinom{d}{[\ua],\,[\uc]}
\tensorover{\Sigma_{[\ua]}} A_{[\ua]} 
\ar[bend right=15, shorten <=-1ex, shorten >=-1ex]{l}[swap]{s}
\end{tikzcd}\]
The three arrows in this diagram are as follows:
\begin{itemize}
\item
$d_0$ is induced by the composition of $\O$.
\item
$d_1$ is induced by the $\O$-algebra structure on $A$.
\item
The common section $s$ is induced by the unit $A \to \O \circ A$.
\end{itemize}
\end{definition}

\begin{lemma}\label{pushout-undelrying-cof}
Under the hypotheses of Theorem \ref{underlying-cofibrant}, suppose $\alpha : f \to g$ is a morphism in $(L_0I \cup L_1I)_c$ for some color $c \in \fC$, and 
\[\nicexy@R-.3cm{\arrowopp \circ f \ar[r] \ar[d]_-{\arrowopp \circ \alpha} & B_0 \ar[d]^-{j}\\
\arrowopp \circ g \ar[r] & B_{\infty}}\]
is a pushout in $\algompp$ with $B_0$ cofibrant and $UB_0 \in (\arrowmppproj)^{\fC}$ cofibrant.  Then $Uj$ is a cofibration in $(\arrowmppproj)^{\fC}$.  In particular, $B_{\infty}$ is also cofibrant and $UB_{\infty} \in (\arrowmppproj)^{\fC}$ is cofibrant.
\end{lemma}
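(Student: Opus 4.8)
The plan is to prove that $Uj$ is a cofibration by resolving it through the standard filtration of a pushout of a free operadic-algebra map, applied to the operad $\arrowopp$ in the monoidal model category $\arrowmppproj$. Since the square in the statement is the pushout of the free map $\arrowopp \circ \alpha$ along $\arrowopp \circ f \to B_0$, the filtration of \cite{white-yau} exhibits $Uj : UB_0 \to UB_\infty$ as a sequential composition
\[UB_0 = A_0 \longrightarrow A_1 \longrightarrow \cdots \longrightarrow \colimover{q} A_q = UB_\infty\]
in $(\arrowmppproj)^{\fC}$, in which, for each $q \geq 1$, the map $A_{q-1} \to A_q$ is a pushout of a layer map of the form $\arrowopp_{B_0}[q] \boxprodover{\Sigma_q} \alpha^{\boxprod_2 q}$. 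Here $\arrowopp_{B_0}$ is the enveloping operad of the algebra $B_0$; its arity-$q$ term $\arrowopp_{B_0}[q]$ lives in $(\arrowmppproj)^{\Sigmaop_q}$, with output color ranging over $\fC$ and the $q$ inputs all equal to $c$ because $\alpha$ is concentrated in the single color $c$; the power $\alpha^{\boxprod_2 q}$ is the $q$-fold iterated pushout product formed with the pushout product $\boxprod_2$ of $\arrowmpp$; and the outer $\boxprodover{\Sigma_q}$ is the monoidal product of $\arrowmpp$ taken over $\Sigma_q$. Because cofibrations in $(\arrowmppproj)^{\fC}$ are closed under pushout and transfinite composition \cite{hirschhorn} (10.3.4), it suffices to show that each layer map is a cofibration in $(\arrowmppproj)^{\fC}$.

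Next I would observe that each layer map is precisely of the shape controlled by the condition $\clubcof$ for the arrow category. The map $\alpha \in (L_0I \cup L_1I)_c$ is a generating cofibration of $\arrowmppproj$ (Theorem \ref{hovey-projective}), hence a cofibration there, so the iterated pushout product $\alpha^{\boxprod_2 q}$ is a cofibration in $\arrowmppproj$ by the pushout product axiom \cite{white-yau-arrowcat}. Granting that $\arrowopp_{B_0}[q]$ is underlying cofibrant in $\arrowmppproj$, Theorem \ref{arrowm-club}—which asserts that $\arrowmppproj$ satisfies $\clubcof$—applied color-wise with $X = \arrowopp_{B_0}[q]$ and the cofibration $\alpha$ then shows that $\arrowopp_{B_0}[q] \boxprodover{\Sigma_q} \alpha^{\boxprod_2 q}$ is a cofibration in $\arrowmppproj$. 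Since cofibrations in the diagram category $(\arrowmppproj)^{\fC}$ over the discrete set $\fC$ are detected color-wise, the layer map is a cofibration in $(\arrowmppproj)^{\fC}$, and the filtration then forces $Uj$ to be a cofibration.

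The main obstacle is the underlying cofibrancy of the enveloping operad terms $\arrowopp_{B_0}[q]$, and this is where both standing cofibrancy hypotheses are used. On one hand, the operad $\arrowopp$ is entrywise cofibrant in $\arrowmppproj$: each entry $\arrowopp\duc = (\varnothing^{\M} \to \O\duc)$ is a cofibrant object of $\arrowmppproj$ exactly when $\O\duc$ is cofibrant in $\M$, which holds because $\O$ is entrywise cofibrant. On the other hand, $B_0$ is a cofibrant $\arrowopp$-algebra with $UB_0$ cofibrant. The enveloping operad of a cofibrant algebra over an entrywise cofibrant operad has underlying cofibrant terms whenever the ambient monoidal model category satisfies $\clubcof$; this is the content of \cite{white-yau} (Section 6.2), whose hypotheses hold here because $\arrowmppproj$ is a monoidal model category (Theorem \ref{hovey-projective}) satisfying $\clubcof$ (Theorem \ref{arrowm-club}). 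Applying that result to $B_0$ yields $\arrowopp_{B_0}[q] \in (\arrowmppproj)^{\Sigmaop_q}$ underlying cofibrant, completing the argument of the previous paragraph. (Alternatively, one re-runs the White--Yau filtration argument directly on the cell structure of the cofibrant algebra $B_0$, using $\clubcof$ for $\arrowmppproj$ at each stage.)

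Finally I would settle the two concluding assertions by bookkeeping. Since $\alpha$ is a cofibration in $\arrowmppproj$, the free map $\arrowopp \circ \alpha$ is a generating cofibration of $\algompp$ by Prop. \ref{cgoverarrowm}, so its pushout $j$ is a cofibration in $\algompp$; composing the cofibration $\varnothing^{\arrowopp} \to B_0$ (which exists because $B_0$ is cofibrant) with $j$ shows that $B_\infty$ is cofibrant. Likewise, $Uj$ being a cofibration in $(\arrowmppproj)^{\fC}$ with $UB_0$ cofibrant shows that $UB_\infty$ is cofibrant.
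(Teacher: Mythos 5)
Your proposal is correct and takes essentially the same route as the paper's proof: the White--Yau filtration \cite{white-yau} (4.3.16) reduces $Uj$ to the layer maps $\arrowopp_{B_0}\dnc \boxprodover{\Sigma_n} \alpha^{\boxprod_2 n}$, which are cofibrations by $\clubcof$ for $\arrowmppproj$ (Theorem \ref{arrowm-club}) once \cite{white-yau} (6.2.4), applied to the entrywise cofibrant operad $\arrowopp$ and the cofibrant algebra $B_0$, gives entrywise cofibrancy of $\arrowopp_{B_0}$. The only cosmetic differences are your separate (true but unneeded, since $\clubcof$ applies to $\alpha$ directly) check that $\alpha^{\boxprod_2 n}$ is a cofibration via the pushout product axiom, and your more explicit bookkeeping of the two concluding cofibrancy assertions, which the paper leaves implicit.
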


\begin{proof}
By the filtration in \cite{white-yau} (4.3.16) and the fact that cofibrations are closed under pushouts, to show that $Uj \in (\arrowmppproj)^{\fC}$ is a cofibration, it is enough to show that, for each $n \geq 1$ and each color $d \in \fC$, the morphism 
\begin{equation}\label{arrowosuba}
\arrowopp_{B_0}\dnc \boxprodover{\Sigma_n} \alpha^{\boxprod_2 n}
\end{equation} %% referee wants more with this O_B
in $\arrowmppproj$ is a cofibration, where $nc = (c,\ldots,c)$ is the $\fC$-profile with $n$ copies of the color $c$.  The object $\arrowopp_{B_0}$ is as in Definition \ref{oaalgebra} for $\arrowopp$ and $B_0$, and $\alpha^{\boxprod_2 n}$ is the $n$-fold pushout product of $\alpha$.  Recall that $\arrowmppproj$ satisfies $\clubcof$ by Theorem \ref{arrowm-club} and that $\arrowopp$ is entrywise cofibrant in $\arrowmppproj$ because $\O$ is entrywise cofibrant in $\M$.  The cofibrancy of $B_0 \in \algompp$ and \cite{white-yau} (6.2.4) applied to $\arrowopp$ now imply that $\arrowopp_{B_0}$ is entrywise cofibrant in $\arrowmppproj$.  By the condition $\clubcof$ in $\arrowmppproj$ once again, we can conclude that the morphism \eqref{arrowosuba} is a cofibration because $\alpha$ is a cofibration in $\arrowmppproj$.
\end{proof}

\begin{corollary}\label{stmod-alloperad}
Suppose $\M$ is the stable module category of $k[G]$-modules for some field $k$ whose characteristic divides the order of $G$.  Then for each $\fC$-colored operad $\O$ in $\M$, there is a Quillen equivalence
\[\nicexy{\algompp \ar@<3pt>[r]^-{\coker} & \algomtensor. \ar@<1pt>[l]^-{\ker}}\]
\end{corollary}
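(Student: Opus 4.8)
The plan is to obtain this corollary as an immediate application of Theorem \ref{underlying-cofibrant}. That theorem delivers the Quillen equivalence between $\algompp$ and $\algomtensor$ as soon as $\M$ is a stable cofibrantly generated monoidal model category satisfying $(\spadesuit)$ and $(\heartsuit)$, with the domains and codomains of the generators small in the whole category, and $\O$ is entrywise cofibrant. The whole task therefore splits into two verifications: that the stable module category meets the standing hypotheses on $\M$, and that \emph{every} $\fC$-colored operad in it is entrywise cofibrant.

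First I would collect the hypotheses on $\M$ from the earlier sections. The stable module category of $k[G]$-modules is a cofibrantly generated monoidal model category whose generating (trivial) cofibrations have domains and codomains small with respect to the entire category (Example \ref{spade-examples}); it satisfies $(\spadesuit)$ by Example \ref{spade-examples}(6) (proved via the argument of \cite{white-yau2} (12.2)); it satisfies $(\heartsuit)$ by the proposition establishing $(\heartsuit)$ for the stable module category of $k[G]$-modules (item (4) of its list); and it is stable by Example \ref{stable-examples}(3). This accounts for every condition imposed on $\M$ in Theorem \ref{underlying-cofibrant}.

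The only point carrying real content is that every operad in the stable module category is entrywise cofibrant, and this is where the special feature of the stable module category enters. Its cofibrations are precisely the monomorphisms of $k[G]$-modules, so for any module $M$ the canonical map $\varnothing^{\M} \to M$ out of the zero module (the initial object) is injective and hence a cofibration. Thus every object is cofibrant, and in particular every entry $\O\duc$ of every $\fC$-colored operad $\O$ is cofibrant. Entrywise cofibrancy therefore holds automatically, with no hypothesis on $\O$ whatsoever.

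With both inputs in hand, Theorem \ref{underlying-cofibrant} applies directly and produces the asserted Quillen equivalence for every $\fC$-colored operad $\O$. I do not expect a genuine obstacle here: all of the homotopy-theoretic work is already packaged in Theorem \ref{underlying-cofibrant}, and the only subtlety worth flagging is the identification of cofibrations with monomorphisms, which is exactly what makes universal entrywise cofibrancy---and hence applicability to \emph{all} operads---possible.
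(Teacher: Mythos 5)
Your proposal is correct and matches the paper's own (one-line) proof: the paper likewise invokes Theorem \ref{underlying-cofibrant}, noting that the stable module category is a stable model category satisfying its hypotheses and that every object there is cofibrant, so every operad is automatically entrywise cofibrant. Your expanded verification of the hypotheses---$(\spadesuit)$ and smallness via Example \ref{spade-examples}, $(\heartsuit)$ via the relevant proposition, stability via Example \ref{stable-examples}, and cofibrations being monomorphisms so that $0 \to M$ is always a cofibration---is exactly the content the paper leaves implicit.
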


\begin{proof}
The stable module category is a stable model category that satisfies the hypotheses of Theorem \ref{underlying-cofibrant} in which every object is cofibrant \cite{hovey} (2.2.12), \cite{white-yau2} (Section 12).
\end{proof}

There are several more examples where Theorem \ref{smith=map} likely applies to all entrywise cofibrant operads, but where $(\heartsuit)$ has not been checked. For example, the positive flat stable model structure on symmetric spectra built on compactly generated spaces have the property that, for any entrywise cofibrant colored operad $\O$, cofibrant $\O$-algebras forget to cofibrant spectra \cite{dmitri} (Section 2), but the authors do not know a reference proving the same for $\arrowmppproj$. 

\begin{conjecture}
The positive flat stable model structure on symmetric spectra built on compactly generated spaces satisfies the conclusion of Theorem \ref{underlying-cofibrant}.
\end{conjecture}

Similarly, by analogy with the positive flat model structure on symmetric spectra, one would expect that the positive flat model structure on $G$-equivariant orthogonal spectra would satisfy this property. 

\begin{conjecture}
If $\M = G\Sp_O$ is the positive flat stable model structure on $G$-equivariant orthogonal spectra, then it  satisfies the property that, if $\O$ is an entrywise cofibrant $\fC$-colored operad and $A$ is a cofibrant $\O$-algebra then $UA$ is cofibrant in $\M^{\fC}$. Furthermore, $\M$ satisfies the conclusion of Theorem \ref{underlying-cofibrant}, for any compact Lie group $G$. 
\end{conjecture}

Recent work of Hill, Hopkins, and Ravenel has illustrated that the positive (flat) model structure on $G\Sp_O$ is not quite right. One also needs an equifibrancy condition, also known as completeness. There is a positive complete model structure on $G\Sp_O$, and it satisfies the commutative monoid axiom \cite{gutierrez-white-equivariant} (Section 5). However, the authors do not know if a positive, complete, flat variant has been worked out.

\begin{problem}
Let $G$ be a compact Lie group. 
\begin{enumerate}
\item Work out a positive complete flat stable model structure on $G\Sp_O$.
\item Prove that it satisfies the condition that all colored operads are admissible.
\item Prove that cofibrant operad-algebras forget to cofibrant underlying objects. 
\item Prove that this model structure satisfies the conclusion of Theorem \ref{underlying-cofibrant}. 
\end{enumerate}
\end{problem}

In a related vein, we have the following problem.

\begin{problem}
Let $\M_s$ (resp. $\M_s^+$) denote Schwede's global positive (flat) model structure \cite{schwede-global} and let $\M_h$ (resp. $\M_h^+$) denote Hausmann's positive (flat) model structure for $G$-symmetric spectra \cite{hausmann}. 
\begin{enumerate}
\item Prove that all colored operads are admissible in $\M_s$, $\M_s^+$, $\M_h$, and $\M_h^+$.
\item Prove that, if $\O$ is entrywise cofibrant, then cofibrant $\O$-algebras forget to underlying cofibrant objects in $\M_s^+$ and $\M_h^+$ in each color.
\item Prove that $\M_s^+$ and $\M_h^+$ satisfy the conclusion of Theorem \ref{underlying-cofibrant}.
\end{enumerate}
\end{problem}

Lastly, injective model structures on various categories of spectra have the property that all objects are cofibrant, so the condition about the forgetful functor preserving cofibrancy is trivial. However, not all operads are admissible. A likely remedy is to develop \textit{positive injective} model structures (by requiring cofibrations to be isomorphisms in level zero), which would automatically be Quillen equivalent to existing stable model structures on spectra, but the authors do not know a reference where this is done. 

\begin{problem}
Let $\M$ denote the category of symmetric spectra.
\begin{enumerate}
\item Prove that the positive injective stable model structure $\M_i^+$ is a monoidal model category.
\item Prove that all operads are admissible in $\M_i^+$. If so, then automatically cofibrant $\O$-algebras forget to cofibrant underlying objects.
\item Prove that $\M_i^+$ satisfies the conclusion of Theorem \ref{underlying-cofibrant}.
\item Do the same for symmetric spectra valued in a general base model category $\cat C$, where stabilization is with respect to an endofunctor $G$.
\item Do the same for orthogonal spectra and equivariant orthogonal spectra, possibly restricting to $\Delta$-generated spaces as is done in \cite{white-localization} (Section 8).
\item Produce a model structure on the category of $S$-modules, Quillen equivalent to the one in \cite{ekmm}, with the property that cofibrant commutative ring spectra are underlying cofibrant. Do the same for general entrywise cofibrant colored operads, and prove that the conclusion of Theorem \ref{underlying-cofibrant} holds in this setting.
\end{enumerate}
\end{problem}

%\appendix

\section{Semi-Model Categories and $\infty$-Categories for Operad Algebras} \label{sec:appendix}

In this paper, we often transferred model structures, using $(\spadesuit)$, or semi-model structures, using Def. \ref{heart-conditions} or using $\Sigmac$-cofibrant operads $\O$, to categories of $\O$-algebras. The language of $\infty$-categories could also be used to study the homotopy theory of $\O$-algebras. We work in the model of quasi-categories, i.e., everywhere we write $\infty$-category we mean quasi-category. The main results of this section, Theorems \ref{thm:appendix} and \ref{thm:appendix2}, show that the two approaches---namely, semi-model categories and $\infty$-categories---are equivalent in a suitable sense for $\Sigmac$-cofibrant $\fC$-colored operads that are \emph{not} necessarily admissible.
%Throughout this section, to use results from \cite{haugseng-2019}, we assume $\M$ admits functorial factorizations.
%% note: we already assumed this, in definition 2.5.1

\subsection{Preliminaries on $\infty$-operads} \label{subsec:prelim-infty-operads}

As detailed in \cite{lurie-higher-algebra} (4.5.4.7 and 4.5.4.12), the crucial property needed to compare a model structure on $\O$-algebras with the corresponding $\infty$-category structure, is that the forgetful functor
\[U: \algom \to \M^\fC\]
preserves and reflects homotopy sifted colimits, that is, $N(\cat C)$-indexed homotopy colimits, where $\cat C$ is a small category such that the nerve $N(\cat C)$ is sifted \cite{lurie} (5.5.8.1).

Lurie \cite{lurie-higher-algebra} (4.5.4.12) proves this property for the $\Com$-operad and a restrictive class of model categories $\M$: namely, combinatorial and freely powered (4.5.4.2) monoidal model categories. Lurie then deduces (4.5.4.7) that the underlying $\infty$-category $N(\CAlg(\M)^c)[W_{Com}^{-1}]$ of the model category $\CAlg(\M)$--where $(-)^c$ refers to taking cofibrant objects, and $W_{Com}$ is the class of weak equivalences of $\Com$-algebras--is equivalent as an $\infty$-category to $\CAlg(N(\M^c)[W^{-1}])$, obtained as the $\infty$-category of commutative monoids valued in the symmetric monoidal $\infty$-category $N(\M^c)[W^{-1}]$ associated to $\M$. Here $N(\M^c)$ denotes the homotopy coherent nerve of the simplicial category $\M^c$, and the notation $(-)[W^{-1}]$ refers to the $\infty$-categorical meaning of inverting the class $W$ \cite{lurie-higher-algebra} (1.3.4.1). To be precise, the $\infty$-category $N(\M^c)[W^{-1}]$ can be constructed via a fibrant replacement of the pair $(\M^c,W)$ in the category of marked simplicial sets \cite{lurie-higher-algebra} (1.3.4.1).

Following the model of Lurie's proof, it is possible to prove that, whenever $\M$ is a simplicial monoidal model category and $\O$ is an \emph{admissible} $\Sigmac$-cofibrant simplicial colored operad (Def. \ref{spade} and \ref{def:sigma-cof}), then the forgetful functor preserves and reflects homotopy sifted colimits, and the $\infty$-category obtained from the model category of $\O$-algebras is equivalent as an $\infty$-category to the $\infty$-category obtained from $N^\otimes\O$-algebras in the $\infty$-category associated to $\M$ \cite{dmitri} (7.9 and 7.11). Here $N^\otimes \O$ is the operadic nerve of $\O$ \cite{lurie-higher-algebra} (2.1.1.23), i.e., Lurie's model for the $\infty$-operad associated to the simplicial colored operad $\O$.
Consequently, for \emph{admissible} $\Sigmac$-cofibrant colored simplicial operads, the homotopy theory obtained via the model category route matches the homotopy theory obtained via the $\infty$-category route. 
% Rune said not to cite this paper
%The same was proven for non-symmetric one-color operads by Haugseng \cite[Proposition 5.2]{haugseng}.

We extend this result in two ways. First, we will show that it holds when $\O$ is only \textit{semi-admissible} instead of admissible (i.e., $\algom$ has a transferred semi-model structure). Second, we will show the same thing for the setting of enriched $\infty$-operads. For the latter, we work in a monoidal model category $\M$ (not-necessarily simplicial) and consider a colored operad $\O$ valued in $\M$. Note that if $\M$ is a $\cat V$-model category for some monoidal model category $\cat V$, and $\O$ is a colored operad valued in $\cat V$, then there is a colored operad $\O'$ valued in $\M$ with the same algebras (obtained by tensoring the levels of $\O$ with the unit of $\M$), so we focus on the case when $\O$ is valued in $\M$. In this case, there is an associated \textit{enriched $\infty$-operad} \cite{chu-haugseng} as we now describe. First, we must restate \cite{haugseng-2019} (4.1).

\begin{definition} \label{defn:subcat-flat}
Let $\M$ be a monoidal model category. A \textit{subcategory of flat objects} is a full symmetric monoidal subcategory $\M^\flat$ (which implies the unit is flat) that satisfies the following two conditions:
\begin{enumerate}
\item All cofibrant objects are flat (that is, are in $\M^\flat$).
\item If $X$ is flat and $f$ is a weak equivalence in $\M^\flat$, then $X\otimes f$ is a weak equivalence.
\end{enumerate}
\end{definition}

If the unit of $\M$ is cofibrant, then the subcategory of cofibrant objects is a subcategory of flat objects \cite{haugseng-2019} (4.2), by Ken Brown's lemma. We note that, if the unit of $\M$ is cofibrant, then the same is true for both $\arrowmppproj$ and $\arrowmtensorinj$. The purpose of the definition above is to avoid assuming the monoidal unit is cofibrant, as this would rule out positive (flat) model structures on spectra (which do admit a subcategory of flat objects, namely the cofibrant objects of the flat model structure, by \cite{haugseng-2019} (4.11)). 
%% this comment is actually irrelevant, because "cofibrant objects are flat" means condition (2) holds for any weak equivalence, not just weak equivalences between cofibrant objects
In \cite{white-commutative} and \cite{white-localization}, the first author gives many examples of model categories with a subcategory of flat objects (namely, the subcategory of cofibrant objects), including spaces, simplicial sets, chain complexes, diagram categories, simplicial presheaves, and various categories of spectra. 

% unit of $\arrowmtensorinj$ is $id_{\mathbb{1}}$ so is entrywise cofibrant because $\mathbb{1}$ is cofibrant in $\M$.
% unit of $\arrowmppproj$ is $u:\emptyset \to \mathbb{1}$ which is cofibrant in $\arrowmppproj$ because (looking at Theorem 2.4.1) if we take $\alpha: id_\emptyset \to u$, then $\alpha_0$ is a cofibration in $\M$ and the pushout corner morphism is just $u$, which is a cofibration in $\M$ as required.

With Definition \ref{defn:subcat-flat} in hand, we are ready to describe the enriched $\infty$-operad associated to a colored operad $\O$ valued in $\M$, following \cite{haugseng-2019} (Section 4). First, the inclusions $\M^c \hookrightarrow \M^\flat \hookrightarrow \M$ induce equivalences of localizations when all three are localized with respect to their subcategories of weak equivalences. Next, the symmetric monoidal localization $\M^\flat \to \M^\flat[W^{-1}]\simeq \M[W^{-1}]$ of \cite{lurie-higher-algebra} (4.1.7.4) gives a functor from $\infty$-operads enriched in $\M^\flat$ to $\infty$-operads enriched in $\M[W^{-1}]$. But because $\M^\flat$ is a 1-category, the former are simply strict colored operads in $\M^\flat$. The following is a combination of \cite{chu-haugseng} (1.1.3) and \cite{haugseng-2019} (4.4).

\begin{proposition}
Let $\M$ be a symmetric monoidal model category and $\M^\flat$ a subcategory of flat objects. Then the $\infty$-category of $\infty$-operads enriched in $\M[W^{-1}]$ is equivalent to the $\infty$-category of enriched colored operads in $\M^\flat$, with the Dwyer-Kan equivalences inverted. 
\end{proposition}

%Similarly, for any colored operad $\O$, the functor $\algom^c \to \algominv$ is an equivalence after localizing the category of cofibrant $\O$-algebras

With these preliminary results and definitions in hand, we are ready to prove the main results of the section.

\subsection{Homotopy Sifted Colimits}

Following the model of \cite{lurie-higher-algebra} (4.5.4.7 and 4.5.4.12), we must first prove that the forgetful functor
\[U: \algom \to \M^\fC\]
preserves and reflects homotopy sifted colimits, even when $\algom$ is only a semi-model category. It suffices to prove this in the case where $\O$ is a colored operad in $\M$, as the case where $\O$ is a simplicial colored operad and $\M$ is a simplicial model category follows from our discussion above regarding $\cat V$-model categories. 

It is known that for every cofibrantly generated monoidal model category $\M$, every $\Sigmac$-cofibrant colored operad $\O$ in $\M$ is semi-admissible.  In other words, there is a transferred semi-model structure on $\O$-algebras \cite{white-yau} (6.3.1). An alternative approach assumes $\M$ satisfies $(\clubsuit)$ and appeals to \cite{white-yau} (6.2.3) for such a semi-model structure. It is also known that there are $\Sigmac$-cofibrant colored operads $\O$ whose category of $\O$-algebras do \emph{not} admit a full model structure \cite{batanin-white-eilenberg-moore} (2.9). Hence, the results in this section really do apply to previously unknown examples, and complete the study of semi-model structures on operad-algebras set out in \cite{white-yau, white-yau2, white-yau3, white-yau4}. For completeness, we handle the case of both symmetric and non-symmetric colored operads \cite{muro}, noting that for the non-symmetric case, being $\Sigmac$-cofibrant is the same as being entrywise cofibrant.

%Inspired by \cite[Proposition 7.8]{dmitri}, we have the following result, which is needed to prove that the transferred semi-model structure on $\sO$-algebras encodes the same homotopy theory as the $\infty$-category of the $\infty$-operad corresponding to $\sO$. 

\begin{proposition} \label{prop:sifted}
Suppose $\M$ is a cofibrantly generated monoidal model category and $\O$ is a $\Sigmac$-cofibrant (symmetric) $\fC$-colored operad valued in $\M$. Then the forgetful functor $U: \algom \to \M^\fC$ preserves and reflects homotopy sifted colimits. 
\end{proposition}

\begin{proof}
We follow the proof from \cite{dmitri} (7.9), which is itself based on the proof of \cite{lurie-higher-algebra} (4.5.4.12). First, as pointed out in \cite{lurie-higher-algebra}, the reflection property is implied by the preservation property, and it is sufficient to prove that $U$ preserves homotopy colimits indexed by a small category $\cat D$ such that the nerve $N(\cat D)$ is homotopy sifted. 

Consider the projective model structure $(\M^\fC)^{\cat D}$, the projective semi-model structure $\algom^{\cat D}$ guaranteed by \cite{barwickSemi} (3.4), and the forgetful functor 
\[U^{\cat D}: \algom^{\cat D} \to (\M^\fC)^{\cat D}.\] 
Let 
\[F: (\M^\fC)^{\cat D} \to \M^\fC \andspace F_{\algo}:\algom^{\cat D} \to \algom\]
denote the colimit functors with respect to $\cat D$. The proof in \cite{lurie-higher-algebra} (4.5.4.12) reduces us to proving that the canonical isomorphism of functors 
\[\alpha: F\circ U^{\cat D} \cong U\circ F_{\algo}: \algom^{\cat D} \to \M^\fC\]
persists after everything is derived. 

Let $LF$ and $LF_\algo$ denote the left derived functors of $F$ and $F_\algo$, obtained via cofibrant replacement in $(\M^\fC)^{\cat D}$ and $\algom^{\cat D}$, respectively.  Since $U$ and $U^{\cat D}$ preserve weak equivalences, as in \cite{lurie-higher-algebra} (4.5.4.12), we are reduced to proving that the induced natural transformation $\overline{\alpha}: LF \circ U^{\cat D} \to U \circ LF_{\algo}$ is an isomorphism in the homotopy category. This means that, for every cofibrant $A$ in $\algom^{\cat D}$, we must show that 
\[\overline{\alpha}:  LF(U^{\cat D}A) \to U (LF_{\algo}(A))\]
is a weak equivalence. 

The right hand side is canonically weakly equivalent to $U(F_\algo(A))$ because $A$ is projectively cofibrant, and this is weakly equivalent to $F(U^{\cat D}A)$ via $\alpha$. At this point, the proof in \cite{lurie-higher-algebra} (4.5.4.12) requires a detailed analysis of so-called ``good'' objects and morphisms in $(\M^\fC)^{\cat D}$. However, when $\O$ is $\Sigmac$-cofibrant, the situation is much simpler, because $U$ takes cofibrant algebras to cofibrant objects of $\M^\fC$ \cite{white-yau} (6.3.1) (\cite{muro} (9.5) for the non-symmetric case). 

Furthermore, the $\cat D$-constant operad $\O^{\cat D}$, taking value $\O$ at every $a\in \cat D$, is $\Sigmac$-cofibrant in $\algom^{\cat D}$.  This can be seen directly, as $\Sigmac$-cofibrancy for an operad $P$ valued in $\M^{\cat D}$ is the condition that, for each $a\in \cat D$ and each $(\uc; d) \in \Sigmacopc$, the object $P_a \duc$ ($=\O \duc$ in our case) is projectively cofibrant in $\M^{\Sigmacopc}$.  Hence, by \cite{white-yau} (6.3.1) (\cite{muro} (9.5) for the non-symmetric case), the functor $U^{\cat D}$ also preserves cofibrancy, since the projective semi-model structure transferred from the semi-model structure on $\algom$ is the same as the transferred semi-model structure on $\O^{\cat D}$-algebras in the projective model structure $(\M^\fC)^{\cat D}$. Hence, $U^{\cat D}A$ is cofibrant in $(\M^\fC)^{\cat D}$, and so $LF(U^{\cat D}A) \simeq F(U^{\cat D}A)$ as required. 
\end{proof} 

\begin{remark}
Following the model of \cite{lurie-higher-algebra} (or \cite{dmitri}), after establishing Proposition \ref{prop:sifted}, the next step should be to prove that the semi-model category $\algom$ describes the $\infty$-category of $N^\otimes \O$-algebras in the $\infty$-category associated to $\M$, as discussed above. However, when $\algom$ is only a semi-model structure, an additional step is needed. We need to know that homotopy colimits (given by colimits of projectively cofibrant objects in $\algom^{\cat D}$) agree with $\infty$-categorical colimits. In the case of full model structures, one knows that the projective model structure on $\algom^{\cat D}$ describes the $\infty$-category of functors, and that a Quillen adjunction gives rise to an adjunction of $\infty$-categories. For the case of semi-model categories, we invoke \cite{giulio} (A.10) for the latter.
%we invoke Theorems 7.9.8 and 7.5.29 of \cite{cisinski-book} for the analogous statements, using the obvious observation that a semi-model category is both an $\infty$-category with weak equivalences and fibrations and an $\infty$-category with weak equivalences and cofibrations \cite[Definition 7.4.12]{cisinski-book}. 
\end{remark}

\begin{remark}
We conjecture that Proposition \ref{prop:sifted} remains true for entrywise cofibrant colored operads $\O$, if $\M$ satisfies $(\clubsuit)$, and if we replace appeals to \cite{white-yau} (6.3.1) above by appeals to \cite{white-yau} (6.2.3). However, the proof of this would require a detailed analysis of `good' objects and would take us too far afield.
\end{remark}

\subsection{Semi-Model Categories and $\infty$-Categories of Operad Algebras}

With the previous proposition in hand, we are ready for the main result of this section. The slogan for this result is that, for any $\Sigmac$-free (symmetric) colored operad $\O$ and any reasonable monoidal model category $\M$, the semi-model category of $\O$-algebras in $\M$ describes the corresponding $\infty$-category of $\O$-algebras in the symmetric monoidal $\infty$-category described by $\M$. This is true both of the following cases:

\begin{enumerate}
\item The unenriched case: where $\M$ is a simplicial monoidal model category, $\O$ is a simplicial colored operad, and the $\infty$-operad associated to $\O$ is the operadic nerve $N^{\otimes}\O$ of $\O$ \cite{lurie-higher-algebra} (2.1.1.23). 
\item The enriched case: where $\M$ is a monoidal model category, $\O$ is a colored operad valued in $\M$, and we use the theory of enriched $\infty$-operads to define the $\infty$-category of $\O$-algebras (as recalled in Section \ref{subsec:prelim-infty-operads} and spelled out in \cite{chu-haugseng, haugseng-2019}).
\end{enumerate}

For both cases, we handle the cases where $\O$ is a symmetric colored operad and where $\O$ is a non-symmetric colored operad simultaneously. We handle the enriched case first.

\begin{theorem}  \label{thm:appendix}
Suppose $\M$ is a cofibrantly generated monoidal model category that admits a subcategory of flat objects $\M^\flat$, and $\O$ is a $\Sigmac$-cofibrant (symmetric) $\fC$-colored operad valued in $\M^\flat$. 
\begin{itemize}
\item Denote by $\algom^c[W_\O^{-1}]$ the $\infty$-category obtained from the semi-model category $\algom$, by first passing to the subcategory of cofibrant objects, and then inverting the weak equivalences between $\O$-algebras.
\item Denote by $\algominv$ the $\infty$-category obtained by first passing from $\M$ to the (symmetric) monoidal category $\M[W^{-1}]$ and then passing to $\O$-algebras, where $\O$ is viewed as a colored operad in $\M[W^{-1}] \simeq \M^\flat[W^{-1}]$.
\end{itemize}
Then the  natural comparison functor
\[\algom^c[W_\O^{-1}] \to \algominv\]
is an equivalence of $\infty$-categories.
\end{theorem} 

\begin{proof}
The proof of \cite{haugseng-2019} (4.10) goes through directly by replacing the appeal to \cite{dmitri} (7.8) with an appeal to Proposition \ref{prop:sifted}. That is, we consider the forgetful functors from both categories to the $\infty$-category associated to $\M^\fC$, and appeal to the Barr-Beck theorem for $\infty$-categories \cite{lurie-higher-algebra} (4.7.3.16) to see that these forgetful functors are monadic right adjoints (this is where Proposition \ref{prop:sifted} is needed). We appeal to \cite{haugseng-2019} (3.8), which occurs entirely on the $\infty$-category level, for the usual formula for free $\O$-algebras and the observation that the two associated monads on $\M^\fC$ have equivalent underlying endofunctors. This proof works for both symmetric and non-symmetric colored operads $\O$, as both are known to inherit transferred semi-model structures from $\M^\fC$, and as Proposition \ref{prop:sifted} applies in both settings.
\end{proof}

\begin{remark} \label{remark:mazel-gee}
The proof of \cite{haugseng-2019} (4.10) relies on the observation that a Quillen adjunction $F:\M \rightleftarrows \N:G$ induces an adjunction between the underlying $\infty$-categories. We appeal to \cite{giulio} (A.10) for the semi-model category analogue of this fact.
%(bad text)The proof of this theorem first replaces the left adjoint $F$ by $F^c$, which goes from the subcategory of cofibrant objects of $\M$ to $\N$, and replaces $G$ by $G^f$ going from the fibrant objects of $\N$ to $\M$. The proof then applies hammock localization to move into the realm of relative categories, and proceeds from there to construct the adjunction between underlying $\infty$-categories. This proof works verbatim for semi-model categories (that is, Quillen adjunctions/equivalences of semi-model categories give rise to adjunctions/equivalences of $\infty$-categories). Semi-model categories admit hammock localization and the usual passage to underlying relative categories \cite{goerss-hopkins}. Additionally, right Quillen functors of semi-model categories behave in exactly the same way as right Quillen functors of model categories, and left Quillen functors on the subcategory of cofibrant objects of a semi-model category behave in the same way as left Quillen functors of model categories \cite{white-yau3}. 

%(good text)In the case of Theorem \ref{thm:appendix}, all we require is that the forgetful functor $U$ from the semi-model category $\algom$ to the model category $\M^\fC$ induces a functor $RU$ of $\infty$-categories. As we are in the semi-model category case, we instead appeal to \cite[Theorem 7.5.29]{cisinski-book}, again using the observation that a semi-model category is an $\infty$-category with weak equivalences and fibrations. 
\end{remark}

We turn now to the unenriched case.

\begin{theorem}  \label{thm:appendix2}
Suppose $\M$ is a cofibrantly generated simplicial monoidal model category, and $\O$ is a $\Sigmac$-cofibrant (symmetric) simplicial $\fC$-colored operad. 
\begin{itemize}
\item Denote by $N(\algom^c)[W_\algo^{-1}]$ the $\infty$-category obtained from the semi-model category $\algom$, by first passing to the subcategory of cofibrant objects, then taking the nerve, and then inverting the weak equivalences.
\item Denote by $\algnom$ the $\infty$-category of $N^{\otimes}\O$-algebras valued in the $\infty$-category $N(\M^c)[W^{-1}]$ associated to $\M$.
\end{itemize}
Then the  natural comparison functor
\[N(\algom^c)[W_\algo^{-1}] \to \algnom\]
is an equivalence of $\infty$-categories.
\end{theorem}

\begin{proof}
We deliberately phrased the proof of Theorem \ref{thm:appendix}, so that word-for-word it proves this result as well (again with the critical step hinging on an appeal to Proposition \ref{prop:sifted}). We only stated the two theorems separately to highlight the difference between enriched and unenriched $\infty$-operads, and the connection to where the colored operad $\O$ is valued.
\end{proof}

\begin{remark}
One can show that Theorems \ref{thm:appendix} and  \ref{thm:appendix2} are false in general in the symmetric case if the $\Sigmac$-cofibrancy of $\O$ is dropped. Well-known counterexamples include the operad $\Com$ and $\M = \Ch(\F_p)$. 
However, every $\fC$-colored operad $\O$ admits a $\Sigmac$-cofibrant replacement $Q\O$. If $\O$ is semi-admissible and admits rectification with $Q\O$ (meaning, there is a Quillen equivalence of semi-model categories between $\algom$ and $\alg(Q\O;\M)$), then Theorems \ref{thm:appendix} and  \ref{thm:appendix2} do apply to $\O$, since the weak equivalence $Q\O \to \O$ induces an equivalence $N^\otimes Q\O \to N^\otimes \O$, and hence we can use the two out of three property to deduce the statement for $\O$ from the statement for $Q\O$. Conditions on $\M$ under which rectification hold are provided in \cite{white-commutative} (for $Com$ rectifying to $E_\infty$) and \cite{white-yau3} (for general colored operads) among other places.
\end{remark}

\begin{remark}
Theorem \ref{thm:appendix} answers positively the question raised in \cite{haugseng-2019} (4.13) about extending \cite{haugseng-2019} (4.10) to $\Sigma$-cofibrant operads and semi-model category structure on $\algom$. As pointed out by Haugseng, the assumptions on $\M$ and $\O$ are much weaker than those required to get a full model structure on $\O$-algebras. In particular, Theorem \ref{thm:appendix} applies not only to the examples listed by Haugseng---namely, spaces, simplicial sets, chain complexes, and symmetric spectra---but also to equivariant spaces, equivariant orthogonal spectra, motivic symmetric spectra, the stable module category, chain complexes over a field of nonzero characteristic, simplicial presheaves, the projective model structure on small functors \cite{chorny-white}, the folk model structure on the category of small categories (or groupoids), to various abelian model structures arising from the theory of cotorsion pairs, and to left Bousfield localizations of these categories. 

These examples are detailed in our papers \cite{white-commutative, white-yau, white-yau2, white-localization}. In several of these examples (e.g., chain complexes over a field of nonzero characteristic, examples arising from cotorsion pairs, and algebras over left Bousfield localizations $L_{\cat C}\M$), categories of algebras are known to have transferred semi-model structures but are not known to have transferred model structures. For chain complexes over $\mathbb{F}_2$, there is even an explicit example of a category of $\O$-algebras that has a transferred semi-model structure that is not a model structure \cite{batanin-white-eilenberg-moore} (2.9). For algebras over a left Bousfield localization $L_{\cat C}\M$, many examples are discussed in \cite{crm, bous-loc-semi, white-oberwolfach, Reedy-paper}.

In most of the examples listed above, the unit is cofibrant and cofibrant objects are flat, so the category of cofibrant objects is our $\M^\flat$ (note that left Bousfield localization does not change the class of cofibrant objects). For the positive (flat) model structure on equivariant orthogonal spectra (resp., motivic symmetric spectra), one can use the cofibrant objects of the flat model structure, just as Haugseng does for symmetric spectra \cite{haugseng-2019}, as discussed in \cite{hovey-white} (resp., \cite{dmitri}, building on work of Hornbostel). 
\end{remark}

We conclude with a specialization of Theorem \ref{thm:appendix} to the main examples of interest in the present paper.

\begin{lemma} \label{lemma:flat-objects}
Suppose $\M$ is a monoidal model category that admits a subcategory of flat objects, $\M^\flat$. Then $\arrowmtensorinj$ also admits a subcategory of flat objects.
\end{lemma}

\begin{proof}
In $\arrowmtensorinj$, we take the full subcategory consisting of arrows $f: X_1\to X_2$ where $X_1$ and $X_2$ are in $\M^\flat$. This is a symmetric monoidal subcategory of $\arrowmtensorinj$, as the monoidal unit $\Id : \tensorunit \to \tensorunit$ is flat and the tensor product of two flat arrows is flat. Condition (1) of Definition \ref{defn:subcat-flat} holds because cofibrations are entrywise, and (2) holds because the tensor product and weak equivalences are entrywise.
\end{proof}

\begin{corollary}
Suppose $\M$ is a cofibrantly generated monoidal model category that admits a subcategory of flat objects $\M^\flat$. Suppose $\O$ is a $\Sigmac$-cofibrant $\fC$-colored operad valued in $\M^\flat$. Then the transferred semi-model structures of Corollary \ref{sigmacof-smith=map} on $\algomtensor$ and $\algompp$ describe the corresponding $\infty$-categories, in the sense of Theorem \ref{thm:appendix}. If, in addition, $\M$ is stable, then the Quillen equivalence of Corollary \ref{sigmacof-smith=map} yields an equivalence of $\infty$-categories.
\end{corollary}

\begin{proof}
This follows from Theorem \ref{thm:appendix}, applied to:
\begin{itemize}
\item $\arrowmtensorinj$ and the colored operad $\arrowotensor$, appealing to Lemma \ref{lemma:flat-objects} for the subcategory of flat objects and to Proposition \ref{sigmac-cofibrant-arrrowcat} for the $\Sigmac$-cofibrancy, and
\item $\M$ and the colored operad $\O^s$, with the assumed subcategory of flat objects on $\M$. As Corollary \ref{smitho-m} shows, $\O^s$ is $\Sigma_{\fC \sqcup \fC}$-cofibrant, and the transferred semi-model structure on $\O^s$-algebras coincides with the transferred semi-model structure on $\algompp$.
\end{itemize}
The statement about Quillen equivalences follows from \cite{giulio} (A.11).
\end{proof}

We note that, in the examples mentioned after Definition \ref{defn:subcat-flat}, we could take $\M^\flat$ to be the subcategory of cofibrant objects of $\M$. In these examples, every $\Sigmac$-cofibrant $\fC$-colored operad is already entrywise cofibrant. Hence, it is no loss of generality to assume $\O$ is valued in $\M^\flat$ instead of in $\M$, for these examples.

%  References  

\end{document}